\theoremstyle{definition}
\newtheorem{mydef}{Definition}[section]
\newtheorem{lem}[mydef]{Lemma}
\newtheorem{thm}[mydef]{Theorem}
\newtheorem{cor}[mydef]{Corollary}
\newtheorem{question}[mydef]{Question}
\newtheorem{hypothesis}[mydef]{Hypothesis}
\newtheorem{defin}[mydef]{Definition}
\newtheorem{remark}[mydef]{Remark}
\newtheorem{notation}[mydef]{Notation}
\newtheorem{fact}[mydef]{Fact}
\newcommand{\fct}[2]{{}^{#1}#2}
\newcommand{\ba}{\bar{a}}
\newcommand{\bb}{\bar{b}}
\newcommand{\bc}{\bar{c}}
\newcommand{\bd}{\bar{d}}
\newcommand{\sea}{\mathfrak{C}}
\newcommand{\cf}[1]{\text{cf} (#1)}
\newcommand{\seq}[1]{\langle #1 \rangle}
\newcommand{\rest}{\upharpoonright}
\newcommand{\bmu}{\bar \mu}
\newcommand{\is}{\mathfrak{i}}
\newcommand{\isssp}[1]{\is_{#1\text{-strong-spl}}}
\newcommand{\issp}[1]{\is_{#1\text{-spl}}}
\newcommand{\isdiv}{\is_{\text{div}}}
\newcommand{\isf}[1]{\is_{#1\text{-forking}}}
\newcommand{\K}{\mathbf{K}}
\def\lta{<_{\K}}
\def\lea{\le_{\K}}
\def\ltu{\lta^{\text{univ}}}
\newbox\noforkbox \newdimen\forklinewidth
\noforkbox\hbox{\lower 2pt\box1\lower
2pt\box0\relax}
\def\unionstick{\mathop{\copy\noforkbox}\limits}
\newbox\doesforkbox
\doesforkbox\hbox{\lower 0pt\box1 \lower
2pt\box2\lower2pt\box0\relax}
\newcommand{\nf}{\unionstick}
\newcommand{\nfs}[4]{#2 \nf_{#1}^{#4} #3}
\def\1nf{\unionstick^{(1)}}
\def\2nf{\unionstick^{(2)}}
\newcommand{\gtp}{\text{gtp}}
\newcommand{\gS}{\text{gS}}
\newcommand{\Ll}{\mathbb{L}}
\newcommand{\EM}{\operatorname{EM}}
\newcommand{\BI}{\mathbf{I}}
\newcommand{\BJ}{\mathbf{J}}
\newcommand{\slc}{\bmu}
\newcommand{\clc}{\kappa}
\newcommand{\clcwk}{\kappa^{\text{wk}}}
\newcommand{\clcc}{\kappa^{\text{cont}}}
\newcommand{\bclp}[1]{\underline{\kappa}^{\text{#1}}}
\newcommand{\bclc}{\bclp{}}
\newcommand{\bclcwk}{\bclp{wk}}
\newcommand{\bclcc}{\bclp{cont}}
\newcommand{\uchi}{\underline{\chi}}
\newcommand{\hanf}[1]{h (#1)}
\newcommand{\LS}{\text{LS}}
\newcommand{\plus}[1]{{#1}_r}
\newcommand{\kappap}{\plus{\kappa}}
\newcommand{\Kmhp}[1]{K^{#1\text{-mh}}}
\newcommand{\Kmh}[1]{\Kmhp{\lambda}}
\newcommand{\SE}{\text{SE}}
\newcommand{\Emin}[1]{E_{\text{min},#1, \alpha}}
\newcommand{\REG}{\operatorname{REG}}
\newcommand{\Stab}{\operatorname{Stab}}
\newcommand{\Mod}{\operatorname{Mod}}
\title[Toward a stability theory of tame AECs]{Toward a stability theory of tame abstract elementary classes}
\author{Sebastien Vasey}
\email{sebv@math.harvard.edu}
\urladdr{http://math.harvard.edu/\textasciitilde sebv/}
\address{Department of Mathematical Sciences, Carnegie Mellon University, Pittsburgh, Pennsylvania, USA}
\address{Current address: Department of Mathematics \\ Harvard University \\ Cambridge, Massachusetts, USA}
\date{\today \\
  AMS 2010 Subject Classification: Primary 03C48. Secondary: 03C45, 03C52, 03C55, 03C75, 03E55.} 
\keywords{Abstract elementary classes; Tameness; Stability spectrum; Saturation spectrum; Limit models}
\begin{document}

\begin{abstract}
  We initiate a systematic investigation of the abstract elementary classes that have amalgamation, satisfy tameness (a locality property for orbital types), and are stable (in terms of the number of orbital types) in some cardinal. Assuming the singular cardinal hypothesis (SCH), we prove a full characterization of the (high-enough) stability cardinals, and connect the stability spectrum with the behavior of saturated models.

  We deduce (in ZFC) that if a class is stable on a tail of cardinals, then it has no long splitting chains (the converse is known). This indicates that there is a clear notion of superstability in this framework.

  We also present an application to homogeneous model theory: for $D$ a homogeneous diagram in a first-order theory $T$, if $D$ is both stable in $|T|$ and categorical in $|T|$ then $D$ is stable in all $\lambda \ge |T|$.
\end{abstract}

\maketitle

\tableofcontents

\section{Introduction}

\subsection{Motivation and history}

Abstract elementary classes (AECs) are partially ordered classes $\K = (K, \lea)$ which satisfy several of the basic category-theoretic properties of classes of the form $(\Mod (T), \preceq)$ for $T$ a first-order theory. They were introduced by Saharon Shelah in the late seventies \cite{sh88} and encompass infinitary logics such as $\Ll_{\lambda^+, \omega} (Q)$ as well as several algebraic examples. One of Shelah's test questions is the eventual categoricity conjecture: an AEC categorical in \emph{some} high-enough cardinal should be categorical in \emph{all} high-enough cardinals.

Toward an approximation, work of Makkai and Shelah \cite{makkaishelah} studied classes of models of an $\Ll_{\kappa, \omega}$ theory categorical in a high-enough cardinal, where $\kappa$ is strongly compact. They proved \cite[1.13]{makkaishelah} that such a class has (eventual) amalgamation, joint embedding, and no maximal models. Thus one can work inside a monster model and look at the corresponding orbital types. Makkai and Shelah established that the orbital types correspond to certain syntactic types, implying in particular that two orbital types are equal if all their restrictions of size less than $\kappa$ are equal. They then went on to develop some superstability theory and concluded that categoricity in some high-enough successor implies categoricity in all high-enough cardinals.

A common theme of recent work on AECs is to try to replace large cardinal hypotheses with their model-theoretic consequences. For example, regardless of whether there are large cardinals, many classes of interests have a monster model and satisfy a locality property for their orbital types (see the introduction to \cite{tamenessone} or the list of examples in the recent survey \cite{bv-survey-bfo}). Toward that end, Grossberg and VanDieren made the locality property isolated by Makkai and Shelah (and later also used by Shelah in another work \cite{sh394}) into a definition: Call an AEC \emph{$\mu$-tame} if its orbital types are determined by their $\mu$-sized restrictions. Will Boney \cite{tamelc-jsl} has generalized the first steps in the work of Makkai and Shelah to AECs, showing that tameness follows from a large cardinal axiom (amalgamation also follows if one assumes categoricity). Earlier, Shelah had shown that Makkai and Shelah's downward part of the transfer holds assuming amalgamation (but not tameness) \cite{sh394} and Grossberg and VanDieren used Shelah's proof (their initial motivation for isolating tameness) to show that the upward part of the transfer holds in tame AECs with amalgamation.

Recently, the superstability theory of tame AECs with a monster model has seen a lot of development \cite{ext-frame-jml, ss-tame-jsl, bv-sat-afml, vv-symmetry-transfer-afml, gv-superstability-jsl} and one can say that most of Makkai and Shelah's work has been generalized to the tame context (see also \cite[D.9(3)]{baldwinbook09}). New concepts not featured in the Makkai and Shelah paper, such as good frames and limit models, have also seen extensive studies (e.g.\ in the previously-cited papers and in Shelah's book \cite{shelahaecbook}). The theory of superstability for AECs has had several applications, including a full proof of Shelah's eventual categoricity conjecture in universal classes \cite{ap-universal-apal, categ-universal-2-selecta}.

While we showed with Grossberg \cite{gv-superstability-jsl} that several possible definitions of superstability are all equivalent in the tame case, it was still open \cite[1.7]{gv-superstability-jsl} whether stability on a tail of cardinals implied these possible definitions (e.g.\ locality of forking).

The present paper answers positively (see Corollary \ref{ss-cor}) by developing the theory of \emph{strictly stable} tame AECs with a monster model. We emphasize that this is \emph{not} the first paper on strictly stable AECs. In their paper introducing tameness \cite{tamenessone}, Grossberg and VanDieren proved several fundamental results (see also \cite{b-k-vd-spectrum}). Shelah \cite[\S4,\S5]{sh394} has made some important contributions without even assuming tameness; see also his work on universal classes \cite[V.E]{shelahaecbook2}. Several recent works \cite{bg-apal,sv-infinitary-stability-afml,bv-sat-afml, limit-strictly-stable-v3} establish results on independence, the first stability cardinal, chains of saturated models, and limit models. The present paper aims to put these papers together and improve some of their results using either the superstability machinery mentioned above or (in the case of Shelah's tameness-free results) assuming tameness.

\subsection{Outline of the main results}

Fix an $\LS (\K)$-tame AEC $\K$ with a monster model. Assume that $\K$ is stable (defined by counting Galois types) in some cardinal. Let $\uchi (\K)$ be the class of regular cardinals $\chi$ such that for all high-enough stability cardinals $\mu$, any type over the union of a $(\mu, \chi)$-limit chain $\seq{M_i : i < \chi}$ does not $\mu$-split over some $M_i$. When $\K$ is an elementary class, $\uchi (\K)$ is an end-segment of cardinals whose minimum is $\kappap (T)$ (the least regular cardinal greater than or equal to $\kappa (T)$). Note that in general we do not know whether $\uchi (\K)$ must be an end segment of regular cardinals or whether it can have gaps (we can give a locality condition implying that it is an end segment, see Corollary \ref{locality-cor} and Theorem \ref{loc-implies-cont}).

Using results from the theory of averages in tame AECs (developed in \cite{bv-sat-afml, gv-superstability-jsl}), we show assuming the singular cardinal hypothesis (SCH\footnote{That is, for every infinite singular cardinal $\lambda$, $\lambda^{\cf{\lambda}} = 2^{\cf{\lambda}} + \lambda^+$.}) that for all high-enough cardinals $\mu$, $\K$ is stable in $\mu$ if and only if $\cf{\mu} \in \uchi (\K)$ (see Corollary \ref{non-zfc-stab-spec}). The right to left direction is implicit in the author's earlier work \cite[5.7]{ss-tame-jsl} but the left to right direction is new.

A consequence of the proof of Corollary \ref{non-zfc-stab-spec} is that stability on a tail implies that $\uchi (\K)$ contains all regular cardinals (Corollary \ref{ss-cor}; note that this is in ZFC). Thus we propose the following definition:

\textbf{Definition \ref{ss-chi-def}.} An $\LS (\K)$-tame AEC with a monster model is \emph{superstable} if $\uchi (\K)$ contains all regular cardinals.

Combining \cite{gv-superstability-jsl} with the results of the present paper, we obtain that Definition \ref{ss-chi-def} is equivalent to many other potential candidate definitions of superstability (such as stability on a tail, or the union of a chain of $\lambda$-saturated models to be $\lambda$-saturated). Thus we believe that \emph{in the setup of tame AECs with a monster model}, this is the right definition.

The present paper also shows that $\uchi (\K)$ connects the stability spectrum with the behavior of saturated models: assuming SCH, a stable tame AEC with a monster model has a saturated model in a high-enough $\lambda$ if and only if [$\lambda = \lambda^{<\lambda}$ or $\K$ is stable in $\lambda$]. In ZFC, we deduce that having saturated models on a tail of cardinals implies superstability (Corollary \ref{ss-sat}). We conclude with Theorem \ref{charact-thm}, giving (in ZFC) several equivalent definitions of $\uchi (\K)$, in terms of uniqueness of limit models, existence of saturated models, or the stability spectrum. Sections \ref{indisc-seq}-\ref{cont-sec-thy} adapt the study of strict stability from \cite{sh394} to the tame context and use a weak continuity property for splitting (as assumed in \cite{limit-strictly-stable-v3}) to improve on some of the results mentioned earlier. Note that while Section \ref{cont-sec-thy} contains the main results, it depends on Sections \ref{indisc-seq}-\ref{strong-split-sec} (which are more technical). Section \ref{applications-sec} gives a quick application to homogeneous model theory: categoricity in $|T|$ and stability in $|T|$ imply stability in all $\lambda \ge |T|$.

The reader may ask how SCH is used in the above results. Roughly, it makes cardinal arithmetic well-behaved enough that for any big-enough cardinal $\lambda$, $\K$ will either be stable in $\lambda$ or in unboundedly many cardinals below $\lambda$. This is connected to defining the locality cardinals in $\uchi (\K)$ using chains rather than as the least cardinal $\kappa$ for which every type does not fork over a set of size less than $\kappa$ (indeed, in AECs it is not even clear what exact form such a definition should take). Still several results of this paper hold (in ZFC) for ``most'' cardinals, and the role of SCH is only to deduce that ``most'' means ``all''.

By a result of Solovay \cite{solovay-sch}, SCH holds above a strongly compact. Thus our results which assume SCH hold also above a strongly compact. This shows that a stability theory (not just a superstability theory) can be developed in the context of the Makkai and Shelah paper, partially answering \cite[6.15]{sh702}.

\subsection{Future work}

We believe that an important test question is whether the aforementioned SCH hypothesis can be removed:

\begin{question}\label{stab-spec-q}
  Let $\K$ be an $\LS (\K)$-tame AEC with a monster model. Can one characterize the stability spectrum in ZFC?
\end{question}

By the present work, the answer to Question \ref{stab-spec-q} is positive assuming the existence of large cardinals.

Apart from $\uchi (\K)$, several other cardinal parameters ($\lambda (\K)$, $\lambda' (\K)$, and $\slc (\K)$) are defined in this paper. Under some assumptions, we can give loose bounds on these cardinals (see e.g.\ Theorem \ref{locality-card-bounds}) but focus on eventual behavior. We believe it is a worthy endeavor (analog to the study of the behavior of the stability spectrum below $2^{|T|}$ in first-order) to try to say something more on these cardinals.

\subsection{Notes}
The background required to read this paper is a solid knowledge of tame AECs (as presented for example in \cite{baldwinbook09}). Familiarity with \cite{ss-tame-jsl} would be very helpful. Results from the recent literature which we rely on can be used as black boxes.

This paper was written while the author was working on a Ph.D.\ thesis under the direction of Rami Grossberg at Carnegie Mellon University and he would like to thank Professor Grossberg for his guidance and assistance in his research in general and in this work specifically. The author also thanks John T.\ Baldwin and the referee for valuable comments that helped improve the presentation of the paper.

Note that at the beginning of several sections, we make global hypotheses assumed throughout the section. In the statement of the main results, these global hypotheses will be repeated.

\section{Preliminaries}

\subsection{Basic notation}

\begin{notation}\label{basic-notation} \
  \begin{enumerate}
  \item We use the Hanf number notation from \cite[4.24]{baldwinbook09}: for $\lambda$ an infinite cardinal, write $\hanf{\lambda} := \beth_{\left(2^{\lambda}\right)^+}$. When $\K$ is an AEC clear from context, $H_1 := \hanf{\LS (\K)}$.
  \item Let $\REG$ denote the class of regular cardinals.
  \end{enumerate}
\end{notation}

\subsection{Monster model, Galois types, and tameness}\label{monster-subsec}

We say that an AEC $\K$ \emph{has a monster model} if it has amalgamation, joint embedding, and arbitrarily large models. Equivalently, it has a (proper class sized) model-homogeneous universal model $\sea$. When $\K$ has a monster model, we fix such a $\sea$ and work inside it. Note that for our purpose amalgamation is the only essential property. Once we have it, we can partition $\K$ into disjoint pieces, each of which has joint embedding (see for example \cite[16.14]{baldwinbook09}). Further, for studying the eventual behavior of $\K$ assuming the existence of arbitrarily large models is natural.

We use the notation of \cite{sv-infinitary-stability-afml} for Galois types. In particular, $\gtp (\bb / A; N)$ denotes the Galois type of the sequence $\bb$ over the set $A$, as computed in $N \in \K$. In case $\K$ has a monster model $\sea$, we write $\gtp (\bb / A)$ instead of $\gtp (\bb / A; \sea)$. In this case, $\gtp (\bb / A) = \gtp (\bc / A)$ if and only if there exists an automorphism $f$ of $\sea$ fixing $A$ such that $f (\bb) = \bc$.

Observe that the definition of Galois types is completely semantic. Tameness is a locality property for types isolated by Grossberg and VanDieren \cite{tamenessone} that, when it holds, allows us to recover some of the syntactic properties of first-order types. For a cardinal $\mu \ge \LS (\K)$, we say that an AEC $\K$ with a monster model is \emph{$\mu$-tame} if whenever $\gtp (b / M) \neq \gtp (c / M)$, there exists $M_0 \in \K_{\le \mu}$ such that $M_0 \lea M$ and $\gtp (b / M_0) \neq \gtp (c / M_0)$. When assuming tameness in this paper, we will usually assume that $\K$ is $\LS (\K)$-tame. Indeed if $\K$ is $\mu$-tame we can just replace $\K$ by $\K_{\ge \mu}$. Then $\LS (\K_{\ge \mu}) = \mu$, so $\K_{\ge \mu}$ will be $\LS (\K_{\ge \mu})$-tame.

Concepts such as stability and saturation are defined as in the first-order case but using Galois type. For example, an AEC $\K$ with a monster model is \emph{stable in $\mu$} if $|\gS (M)| \le \mu$ for every $M \in \K_\mu$. For $\mu > \LS (\K)$, a model $M \in \K$ is \emph{$\mu$-saturated} if every Galois type over a $\lea$-substructure of $M$ of size less than $\mu$ is realized in $M$. In the literature, these are often called ``Galois stable'' and ``Galois saturated'', but we omit the ``Galois'' prefix since there is no risk of confusion in this paper.

As in \cite[4.3]{sh394}:

\begin{defin}\label{op-def}
  Let $\K$ be an AEC with a monster model and let $\alpha$ be a non-zero cardinal. We say that \emph{$\K$ has the $\alpha$-order property} if for every $\theta$ there exists $\seq{\ba_i : i < \theta}$ such that for all $i < \theta$, $\ell (\ba_i) = \alpha$ and for all $i_0 < i_1 < \theta$, $j_0 < j_1 < \theta$, $\gtp (\ba_{i_0} \ba_{i_1} / \emptyset) \neq \gtp (\ba_{j_1} \ba_{j_0} / \emptyset)$.
\end{defin}

\subsection{Independence relations}

An \emph{abstract class (AC)} is a partial order $\K = (K, \lea)$ where $K$ is a class of structures in a fixed vocabulary $\tau (\K)$, $\K$ is closed under isomorphisms, and $M \lea N$ implies $M \subseteq N$ (the definition is due to Rami Grossberg \cite{grossbergbook}).

In this paper, an \emph{independence relation} will be a pair $(\K, \nf)$, where:

\begin{enumerate}
\item $\K$ is a coherent\footnote{that is, whenever $M_0 \subseteq M_1 \lea M_2$ and $M_0 \lea M_2$, we have that $M_0 \lea M_1$.} abstract class with amalgamation.
\item $\nf$ is a $4$-ary relation so that:
  \begin{enumerate}
  \item $\nf (M, A, B, N)$ implies $M \lea N$, $A, B \subseteq |N|$, $|A| \le 1$. We write $\nfs{M}{A}{B}{N}$.
  \item $\nf$ satisfies invariance, normality, and monotonicity (see \cite[3.6]{indep-aec-apal} for the definitions).
  \item We also ask that $\nf$ satisfies base monotonicity: if $\nfs{M_0}{A}{B}{N}$, $M_0 \lea M \lea N$, and $|M| \subseteq B$, then $\nfs{M}{A}{B}{N}$.
  \end{enumerate}
\end{enumerate}

Note that this definition differs slightly from that in \cite[3.6]{indep-aec-apal}: there additional parameters are added controlling the size of the left and right hand side, and base monotonicity is not assumed. Here, the size of the left hand side is at most 1 and the size of the right hand side is not bounded. So in the terminology of \cite{indep-aec-apal}, we are defining a $(\le 1, [0, \infty))$-independence relation with base monotonicity.

  When $\is = (\K, \nf)$ is an independence relation and $p \in \gS (B; N)$ (we make use of Galois types over sets, see \cite[2.16]{sv-infinitary-stability-afml}), we say that $p$ \emph{does not $\is$-fork over $M$} if $\nfs{M}{a}{B}{N}$ for some (any) $a$ realizing $p$ in $N$. When $\is$ is clear from context, we omit it and just say that $p$ \emph{does not fork over $M$}.

  The following independence notion is central. It was introduced by Shelah in \cite[3.2]{sh394}.

  \begin{defin}\label{splitting-def}
    Let $\K$ be a coherent abstract class with amalgamation, let $M \lea N$, $p \in \gS (N)$, and let $\mu \ge \|M\|$. We say that $p$ \emph{$\mu$-splits over $M$} if there exists $N_1, N_2 \in \K_{\le \mu}$ and $f$ such that $M \lea N_\ell \lea N$ for $\ell = 1,2$, $f: N_1 \cong_{M} N_2$, and $f (p \rest N_1) \neq p \rest N_2$.

    For $\lambda$ an infinite cardinal, we write $\issp{\mu} (\K_\lambda)$ for the independence relation with underlying class $\K_\lambda$ and underlying independence notion non $\mu$-splitting.
  \end{defin}

  \subsection{Universal orderings and limit models}\label{univ-sec}

  Work inside an abstract class $\K$. For $M \lta N$, we say that $N$ is \emph{universal over $M$} (and write $M \ltu N$) if for any $M' \in \K$ with $M \lea M'$ and $\|M'\| = \|M\|$, there exists $f: M' \xrightarrow[M]{} N$. For a cardinal $\mu$ and a limit ordinal $\delta < \mu^+$, we say that $N$ is \emph{$(\mu, \delta)$-limit over $M$} if there exists an increasing continuous chain $\seq{N_i : i \le \delta}$ such that $N_0 = M$, $N_\delta = N$, and for any $i < \delta$, $N_i$ is in $\K_\mu$ and $N_{i + 1}$ is universal over $N_i$. For $A \subseteq \mu^+$ a set of limit ordinals, we say that $N$ is \emph{$(\mu, A)$-limit over $M$} if there exists $\gamma \in A$ such that $N$ is $(\mu, \gamma)$-limit over $M$. $(\mu, \ge \delta)$-limit means $(\mu, [\delta, \mu^+) \cap \REG)$-limit. We will use without mention the basic facts about limit models in AECs: existence (assuming stability and a monster model) and uniqueness when they have the same cofinality. See \cite{gvv-mlq} for an introduction to the theory of limit models.

    \subsection{Locality cardinals for independence}

    One of the main object of study of this paper is $\uchi (\K)$ (see Definition \ref{chi-def}), which roughly is the class of regular cardinals $\chi$ such that for any increasing continuous chain $\seq{M_i : i \le \chi}$ where each model is universal over the previous one and for any $p \in \gS (M_\chi)$ there exists $i < \chi$ such that $p$ does not $\|M_i\|$-split over $M_i$. Interestingly, we cannot rule out the possibility that there are gaps in $\uchi (\K)$, i.e.\ although we do not have any examples, it is conceivable that there are regular $\chi_0 < \chi_1 < \chi_2$ such that chains of length $\chi_0$ and $\chi_2$ have the good property above but chains of length $\chi_1$ do not). This is why we follow Shelah's approach from \cite{sh394} (see in particular the remark on top of p.~275 there) and define \emph{classes} of locality cardinals, rather than directly taking a minimum (as in for example \cite[4.3]{tamenessone}). We give a sufficient locality condition implying that there are no gaps in $\uchi (\K)$ (see Theorem \ref{loc-implies-cont}).

    The cardinals $\bclcwk$ are already in \cite[4.8]{sh394}, while $\bclcc$ is used in the proof of the Shelah-Villaveces theorem \cite[2.2.1]{shvi635}, see also \cite{shvi-notes-apal}.

    \begin{defin}[Locality cardinals]\label{loc-def}
  Let $\is$ be an independence relation. Let $R$ be a partial order on $\K$ extending $\lea$.

    \begin{enumerate}
    \item $\bclc (\is, R)$ is the set of regular cardinals $\chi$ such that whenever $\seq{M_i : i < \chi}$ is an $R$-increasing chain\footnote{that is, $M_i R M_{j}$ for all $i < j < \chi$.}, $N \in \K$ is such that $M_i \lea N$ for all $i < \chi$, and $p \in \gS (\bigcup_{i < \chi} |M_i|; N)$, there exists $i < \chi$ such that $p$ does not fork over $M_i$.
    \item $\bclcwk (\is, R)$ is the set of regular cardinals $\chi$ such that whenever $\seq{M_i : i < \chi}$ is an $R$-increasing chain, $N \in \K$ is such that $M_i \lea N$ for all $i < \chi$, and $p \in \gS (\bigcup_{i < \chi} |M_i|; N)$, there exists $i < \chi$ such that $p \rest M_{i + 1}$ does not fork over $M_i$.
    \item $\bclcc (\is, R)$ is the set of regular cardinals $\chi$ such that whenever $\seq{M_i : i < \chi}$ is an $R$-increasing chain, $N \in \K$ is such that $M_i \lea N$ for all $i < \chi$, and $p \in \gS (\bigcup_{i < \chi} |M_i|; N)$, if $p \rest M_i$ does not fork over $M_0$ for all $i < \chi$, then $p$ does not fork over $M_0$.
    \end{enumerate}
    \end{defin}

    When $R$ is $\lea$, we omit it. In this paper, $R$ will mostly be $\ltu$ (see Section \ref{univ-sec}).

    \begin{remark}
      The behavior at singular cardinals has some interests (see for example \cite[11(4)]{shvi-notes-apal}), but we focus on regular cardinals in this paper.
    \end{remark}

    Note that $\bclcwk (\is, R)$ is an end segment of regular cardinals (so it has no gaps): if $\chi_0 < \chi_1$ are regular cardinals and $\chi_0 \in \bclcwk (\is, R)$, then $\chi_1 \in \bclcwk (\is, R)$. In section \ref{cont-sec} we will give conditions under which $\bclc (\is, R)$ and $\bclcc (\is, R)$ are also end segments. In this case, the following cardinals are especially interesting (note the absence of line under $\kappa$):

\begin{defin}
  $\clc (\is, R)$ is the least regular cardinal $\chi \in \bclc (\is, R)$ such that for any regular cardinals $\chi' > \chi$, we have that $\chi' \in \bclc (\is, R)$. Similarly define $\clcwk (\is, R)$ and $\clcc (\is, R)$.
\end{defin}

The following is given by the proof of \cite[11(1)]{shvi-notes-apal}:

\begin{fact}\label{locality-fact}
  Let $\is = (\K, \nf)$ be an independence relation. Let $R$ be a  partial order on $\K$ extending $\lea$.

  We have that $\bclcwk (\is, R) \cap \bclcc (\is, R) \subseteq \bclc (\is, R)$.
\end{fact}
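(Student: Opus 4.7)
Let $\chi \in \bclcwk(\is, R) \cap \bclcc(\is, R)$ and fix an $R$-increasing chain $\seq{M_i : i < \chi}$, $N \in \K$ with $M_i \lea N$ for all $i < \chi$, and $p \in \gS(\bigcup_{i<\chi}|M_i|; N)$. The plan is to use $\bclcwk$ to locate a single successor-step non-forking witness, and then to use $\bclcc$ to promote it into a non-forking statement over one base model $M_{i^*}$, yielding $\chi \in \bclc(\is, R)$.

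First, apply $\bclcwk(\is, R)$ to this chain and type to obtain $i_0 < \chi$ with $p \rest M_{i_0+1}$ not forking over $M_{i_0}$; the candidate for $i^*$ is $i_0$. Next, consider the tail chain $\seq{M_{i_0+\alpha} : \alpha < \chi}$, still $R$-increasing of length $\chi$, whose union equals $\bigcup_{i<\chi}|M_i|$ since $i_0 < \chi$ and $\chi$ is regular. The goal is then to apply $\bclcc(\is, R)$ to this tail chain with base $M_{i_0}$, which will conclude that $p$ does not fork over $M_{i_0}$ as soon as one verifies the continuity hypothesis: $p \rest M_{i_0+\alpha}$ does not fork over $M_{i_0}$ for every $\alpha < \chi$.

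To verify that hypothesis, I would proceed by transfinite recursion on $\alpha$, simultaneously building a cofinal strictly increasing continuous sequence $\seq{j_\alpha : \alpha < \chi}$ with $j_0 = i_0$, and proving that $p \rest M_{j_\alpha}$ does not fork over $M_{i_0}$. At successor stages, invoke $\bclcwk$ applied to an appropriately shifted tail chain to locate the next index $j_{\alpha+1}$ carrying a successor-step non-forking witness, and glue it to the inductive hypothesis using base monotonicity and normality. At limit stages of cofinality less than $\chi$, pad the partial sub-chain out to length $\chi$ in a way that preserves the non-forking data, so that $\bclcc$ becomes applicable on the padded chain and lifts the restrictions' non-forking across the limit.

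The main obstacle lies in the successor step of this recursion. Combining ``$p \rest M_{j_\alpha}$ does not fork over $M_{i_0}$'' with the successor witness ``$p \rest M_{j_{\alpha+1}}$ does not fork over $M_{j_\alpha}$'' into ``$p \rest M_{j_{\alpha+1}}$ does not fork over $M_{i_0}$'' is a transitivity-style assertion that is not among the axioms for an independence relation (only invariance, normality, monotonicity, and base monotonicity are assumed). Circumventing this requires constructing, at each successor stage, an auxiliary $R$-increasing length-$\chi$ chain running from $M_{i_0}$ up through $M_{j_{\alpha+1}}$, and applying $\bclcc$ on that auxiliary chain to synthesize the missing transitivity from pure continuity. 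Orchestrating this bookkeeping consistently through the recursion is the technical heart of the proof, which is presumably what is done in the proof of \cite[11(1)]{shvi-notes-v3-toappear}.
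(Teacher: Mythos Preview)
Your direct approach runs into a genuine obstacle, and the workaround you sketch does not close it. To apply $\bclcc$ on an auxiliary length-$\chi$ chain ending at $M_{j_{\alpha+1}}$ with base $M_{i_0}$, you would need to already know that $p$ restricted to \emph{every} model along that chain does not fork over $M_{i_0}$; but for the models strictly between $M_{j_\alpha}$ and $M_{j_{\alpha+1}}$ you have no such information. So the auxiliary-chain idea just reproduces the transitivity gap rather than avoiding it. The remark following the corollary in the paper confirms that transitivity is genuinely \emph{not} available here (it is singled out as an extra hypothesis needed for the reverse inclusion), so any argument that reduces to a transitivity step cannot succeed.

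The argument from \cite{shvi-notes-v3-toappear} goes by contradiction and reverses the order in which the two locality properties are invoked. Assume that for every $i < \chi$ the type $p$ forks over $M_i$. Using the \emph{contrapositive} of $\bclcc$ applied to tail chains, build a strictly increasing continuous sequence $\seq{i_\alpha : \alpha < \chi}$ with $i_0 = 0$ such that $p \rest M_{i_{\alpha+1}}$ forks over $M_{i_\alpha}$ for every $\alpha$: given $i_\alpha$, since $p$ forks over $M_{i_\alpha}$, continuity for the tail chain $\seq{M_j : i_\alpha \le j < \chi}$ yields some $j > i_\alpha$ with $p \rest M_j$ forking over $M_{i_\alpha}$; set $i_{\alpha+1} := j$. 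The sequence $\seq{M_{i_\alpha} : \alpha < \chi}$ is then an $R$-increasing chain of length $\chi$ with the same union, and applying $\bclcwk$ to it produces an $\alpha$ with $p \rest M_{i_{\alpha+1}}$ not forking over $M_{i_\alpha}$, contradicting the construction. No transitivity is needed anywhere.
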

\begin{cor}\label{locality-cor}
  Let $\is = (\K, \nf)$ be an independence relation. Let $R$ be a  partial order on $\K$ extending $\lea$. If $\clcc (\is, R)   = \aleph_0$ (i.e.\ $\bclcc (\is, R)$ contains all the regular cardinals), then $\bclc (\is, R) = \bclcwk (\is, R)$ and both are end segments of regular cardinals.
\end{cor}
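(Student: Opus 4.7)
The plan is to establish the equality $\bclc(\is, R) = \bclcwk(\is, R)$ by showing both inclusions, and then to read off the end-segment conclusion from the fact, already noted in the text, that $\bclcwk(\is, R)$ is an end segment of regular cardinals.

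First I would handle the inclusion $\bclc(\is, R) \subseteq \bclcwk(\is, R)$, which in fact holds without any hypothesis on $\bclcc(\is, R)$ and is a direct consequence of monotonicity of $\nf$. Given $\chi \in \bclc(\is, R)$, take any $R$-increasing chain $\seq{M_i : i < \chi}$, any $N \in \K$ with $M_i \lea N$ for all $i < \chi$, and any $p \in \gS(\bigcup_{i < \chi} |M_i|; N)$. By definition there is some $i < \chi$ such that $p$ does not fork over $M_i$, i.e.\ $\nfs{M_i}{a}{\bigcup_{j < \chi} |M_j|}{N}$ for $a$ realizing $p$. Since $|M_{i+1}| \subseteq \bigcup_{j < \chi} |M_j|$, monotonicity yields $\nfs{M_i}{a}{M_{i+1}}{N}$, so $p \rest M_{i+1}$ does not fork over $M_i$; hence $\chi \in \bclcwk(\is, R)$.

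The reverse inclusion $\bclcwk(\is, R) \subseteq \bclc(\is, R)$ is where the hypothesis enters, and is immediate from Fact \ref{locality-fact}. That fact gives $\bclcwk(\is, R) \cap \bclcc(\is, R) \subseteq \bclc(\is, R)$; but the assumption that $\clcc(\is, R) = \aleph_0$ means $\bclcc(\is, R)$ contains every regular cardinal, so the intersection on the left reduces to $\bclcwk(\is, R)$ itself. Combining the two inclusions gives $\bclc(\is, R) = \bclcwk(\is, R)$, and since $\bclcwk(\is, R)$ is already known to be an end segment of regular cardinals, so is $\bclc(\is, R)$. There is no real obstacle in the argument: once Fact \ref{locality-fact} is in hand, the corollary is a short bookkeeping deduction combining that fact with the standard monotonicity of the independence relation.
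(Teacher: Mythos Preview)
Your proof is correct and follows essentially the same approach as the paper: the paper's proof is the one-line ``Directly from Fact \ref{locality-fact} (using that by definition $\bclcwk (\is, R)$ is always an end segment of regular cardinals),'' and you have simply unpacked this by making explicit the trivial inclusion $\bclc(\is,R) \subseteq \bclcwk(\is,R)$ via monotonicity. There is nothing to add.
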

\begin{proof}
  Directly from Fact \ref{locality-fact} (using that by definition $\bclcwk (\is, R)$ is always an end segment of regular cardinals).
\end{proof}
\begin{remark}
  The conclusion of Fact \ref{locality-fact} can be made into an equality assuming that $\is$ satisfies a weak transitivity property (see the statement for splitting and $R = \ltu$ in \cite[3.7]{ss-tame-jsl}). This is not needed in this paper.
\end{remark}

\section{Continuity of forking}\label{cont-sec}

In this section, we aim to study the locality cardinals and give conditions under which $\bclcc$ contains all regular cardinals. We work in an AEC with amalgamation and stability in a single cardinal $\mu$:

\begin{hypothesis}\label{cont-hyp} \
  \begin{enumerate}
  \item $\K$ is an AEC, $\mu \ge \LS (\K)$.
  \item $\K_\mu$ has amalgamation, joint embedding, and no maximal models in $\mu$. Moreover $\K$ is stable in $\mu$.
  \item $\is = (\K_\mu, \nf)$ is an independence relation.
  \end{enumerate}
\end{hypothesis}
\begin{remark}
  The results of this section generalize to AECs that may not have full amalgamation in $\mu$, but only satisfy the properties from \cite{shvi635}: density of amalgamation bases, existence of universal extensions, and limit models being amalgamation bases.
\end{remark}

We will usually assume that $\is$ has the weak uniqueness property:

\begin{defin}\label{weak-uq-def}
  $\is$ has \emph{weak uniqueness} if whenever $M_0 \lea M \lea N$ are all in $\K_\mu$ with $M$ universal over $M_0$, $p, q \in \gS (N)$ do not fork over $M_0$, and $p \rest M = q \rest M$, then $p = q$.
\end{defin}

The reader can think of $\is$ as non-$\mu$-splitting (Definition \ref{splitting-def}), where such a property holds \cite[I.4.12]{vandierennomax}. We state a more general version:

\begin{fact}[6.2 in \cite{tamenessone}]\label{splitting-weak-uq}
  $\issp{\mu} (\K_\mu)$ has weak uniqueness. More generally, let $M_0 \lea M \lea N$ all be in $\K_{\ge \mu}$ with $M_0 \in \K_\mu$. Assume that $M$ is universal over $M_0$ and $\K$ is $(\mu, \|N\|)$-tame (i.e.\ types over models of size $\|N\|$ are determined by their restrictions of size $\mu$).

  Let $p, q \in \gS (N)$. If $p, q$ both do not $\mu$-split over $M_0$ and $p \rest M = q \rest M$, then $p = q$.
\end{fact}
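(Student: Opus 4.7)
The plan is to reduce the statement for models of size $\|N\|$ to the case $\|N_0\| = \mu$ using tameness, and then run the standard argument: use universality of $M$ over $M_0$ to embed the small witness into $M$, and invoke non-$\mu$-splitting to transport the types.

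More concretely, suppose for contradiction that $p \neq q$. By $(\mu, \|N\|)$-tameness there is $N_0 \lea N$ with $\|N_0\| \leq \mu$ and $p \rest N_0 \neq q \rest N_0$. Using downward Löwenheim–Skolem inside $N$, enlarge $N_0$ to some $N_0' \lea N$ of size exactly $\mu$ with $M_0 \lea N_0'$. By monotonicity of Galois types, $p \rest N_0' \neq q \rest N_0'$. Replacing $N_0$ by $N_0'$, we may therefore assume $M_0 \lea N_0 \lea N$, $\|N_0\| = \mu$, and $p \rest N_0 \neq q \rest N_0$.

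Next, since $\|N_0\| = \mu = \|M_0\|$ and $M$ is universal over $M_0$, there exists an embedding $f : N_0 \xrightarrow[M_0]{} M$. Let $N_0^\ast := f[N_0]$, so $N_0^\ast \lea M \lea N$ and $f : N_0 \cong_{M_0} N_0^\ast$, with both $N_0, N_0^\ast \in \K_{\leq \mu}$ and both contained in $N$. Because $p$ does not $\mu$-split over $M_0$, the definition of $\mu$-splitting applied to the witnessing triple $(N_0, N_0^\ast, f)$ forces $f(p \rest N_0) = p \rest N_0^\ast$; by the same reasoning, $f(q \rest N_0) = q \rest N_0^\ast$.

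Finally, $N_0^\ast \lea M$ together with $p \rest M = q \rest M$ gives $p \rest N_0^\ast = q \rest N_0^\ast$, whence $f(p \rest N_0) = f(q \rest N_0)$. Applying $f^{-1}$ (which is an isomorphism over $M_0$ and hence acts on Galois types) yields $p \rest N_0 = q \rest N_0$, contradicting the choice of $N_0$. The only real subtlety is the initial tameness reduction — one must arrange that the small witness $N_0$ both extends $M_0$ and keeps size exactly $\mu$ so that universality of $M$ over $M_0$ applies — but once that is done the argument is essentially the one recorded in \cite[I.4.12]{vandierennomax} for the $\mu$-sized case.
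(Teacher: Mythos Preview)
The paper does not supply its own proof of this statement: it is recorded as a Fact with a citation to \cite[6.2]{tamenessone} (and the special case to \cite[I.4.12]{vandierennomax}). Your argument is correct and is exactly the standard proof one finds in those references: reduce via tameness to a $\mu$-sized witness $N_0$ above $M_0$, push $N_0$ into $M$ using universality, and use non-$\mu$-splitting to compare. There is nothing to add.
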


Interestingly, weak uniqueness implies a weak version of extension: 

\begin{lem}[Weak extension]\label{weak-ext}
  Let $M_0 \lea M \lea N$ all be in $\K_\mu$. Assume that $M$ is universal over $M_0$. Let $p \in \gS (M)$ and assume that $p$ does not fork over $M_0$.
  
  If $\is$ has weak uniqueness, then there exists $q \in \gS (N)$ extending $p$ such that $q$ does not fork over $M_0$.
\end{lem}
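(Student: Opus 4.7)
My plan is to construct $q$ as the pullback of $p$ along an embedding of $N$ into $M$ provided by universality, and then to verify that $q$ extends $p$ via weak uniqueness.

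First, since $M$ is universal over $M_0$, $M_0 \lea N$, and $\|N\| = \mu = \|M_0\|$, I fix a $\K_\mu$-embedding $f : N \to M$ fixing $M_0$ pointwise. Set $N_1 := f[N]$ and $M_1 := f[M]$; this produces the chain $M_0 \lea M_1 \lea N_1 \lea M$, and $f$ restricts to $M_0$-fixing isomorphisms $N \cong N_1$ and $M \cong M_1$. Since being universal over $M_0$ is preserved under $M_0$-fixing isomorphisms, $M_1$ is also universal over $M_0$.

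Next, I define $q \in \gS(N)$ as the transport of $p \rest N_1$ along $f^{-1} : N_1 \to N$. Concretely, pick $a \in M^+ \gea M$ realizing $p$, extend $f^{-1}$ to an isomorphism $\hat{f}^{-1} : M^+ \to \hat{M}$ (with $N \lea \hat{M}$) using amalgamation in $\K_\mu$, and put $q := \gtp(\hat{f}^{-1}(a)/N ; \hat{M})$. Since $p \rest N_1$ does not $\is$-fork over $M_0$ by monotonicity and $\hat{f}^{-1}$ fixes $M_0$, invariance of $\nf$ yields that $q$ also does not $\is$-fork over $M_0$.

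The final step is to verify $q \rest M = p$. Both lie in $\gS(M)$ and do not $\is$-fork over $M_0$ (for $q \rest M$ by monotonicity). Applying weak uniqueness in the configuration $M_0 \lea M_1 \lea M$ (with $M_1$ universal over $M_0$) reduces this to the equality $q \rest M_1 = p \rest M_1$. Unwinding the transport yields $q \rest M_1 = (f \rest M_1)^{-1}_{\ast}(p \rest f[M_1])$, and invariance of $\nf$ under the $M_0$-fixing isomorphism $f \rest M_1 : M_1 \to f[M_1]$ shows that $q \rest M_1$ and $p \rest M_1$ are both $\is$-non-forking extensions to $M_1$ of the common type $p \rest M_0$; a further application of weak uniqueness, now with middle $f[M_1]$ inside the outer $M_1$, closes the argument.

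The principal obstacle is this final step, $q \rest M_1 = p \rest M_1$. In the concrete case $\is = \issp{\mu}(\K_\mu)$ it is immediate from the definition of non-$\mu$-splitting applied to $f \rest M : M \cong_{M_0} M_1$. In the abstract setting, extracting the identity from weak uniqueness alone formally unwinds into an iteration along the descending chain $M \gea M_1 \gea f[M_1] \gea \cdots$ of $\K_\mu$-submodels universal over $M_0$, each step being another application of weak uniqueness; verifying that this iteration actually closes is the delicate part of the argument.
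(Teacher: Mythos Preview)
Your construction of $q$ is right, and you correctly identify that the whole difficulty is the final step $q \rest M = p$. But the infinite regress you describe does \emph{not} close: to show $q \rest M = p$ via weak uniqueness at $M_0 \lea M_1 \lea M$ you need $q \rest M_1 = p \rest M_1$; to get that via weak uniqueness at $M_0 \lea f[M_1] \lea M_1$ you need $q \rest f[M_1] = p \rest f[M_1]$; and so on. At no finite stage does the chain reach a model on which equality is already known (you only know $q \rest M_0 = p \rest M_0$, and $M_0$ is not universal over itself in the required sense). The intersection $\bigcap_n f^n[M_1]$ need not be a model, so there is no limit stage to appeal to either. Saying the iteration is ``delicate'' is not a proof; as written, this is a genuine gap.

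The missing idea is to choose the embedding $f$ to fix \emph{more} than $M_0$: pick an intermediate $M' \in \K_\mu$ with $M_0 \lea M' \lea M$, $M'$ universal over $M_0$, and $M$ universal over $M'$, then take $f: N \xrightarrow[M']{} M$. Now $q \rest M' = p \rest M'$ is immediate (no weak uniqueness needed, just that $f$ fixes $M'$), and a single application of weak uniqueness at $M_0 \lea M' \lea M$ gives $q \rest M = p$. To guarantee such an $M'$ exists, the paper first handles the special case where $M$ is $(\mu,\omega)$-limit over $M_0$ (so a witnessing chain $M_0 \lea M_1 \lea \cdots$ provides $M' := M_1$), and then reduces the general case to this one: embed a $(\mu,\omega)$-limit $M'$ over $M_0$ into $M$, apply the special case to $p \rest M'$, and use one more instance of weak uniqueness to see the resulting $q$ actually extends all of $p$.
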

\begin{proof}
  We first prove the result when $M$ is $(\mu, \omega)$-limit over $M_0$. In this case we can write $M = M_\omega$, where $\seq{M_i : i \le \omega}$ is increasing continuous with $M_{i + 1}$ universal over $M_i$ for each $i < \omega$.
  
  Let $f: N \xrightarrow[M_1]{} M$. Let $q := f^{-1} (p)$. Then $q \in \gS (N)$ and by invariance $q$ does not fork over $M_0$. It remains to show that $q$ extends $p$. Let $q_M := q \rest M$. We want to see that $q_M = p$. By monotonicity, $q_M$ does not fork over $M_0$. Moreover, $q_M \rest M_1 = p \rest M_1$. By weak uniqueness, this implies that $q_M = p$, as desired.

  In the general case (when $M$ is only universal over $M_0$), let $M' \in \K_\mu$ be $(\mu, \omega)$-limit over $M_0$. By universality, we can assume that $M_0 \lea M' \lea M$. By the special case we have just proven, there exists $q \in \gS (N)$ extending $p \rest M'$ such that $q$ does not fork over $M_0$. By weak uniqueness, we must have that also $q \rest M = p$, i.e.\ $q$ extends $p$.
\end{proof}

We will derive continuity from weak uniqueness and the following locality property\footnote{In an earlier version, we derived continuity without any locality property but our argument contained a mistake.}, a weakening of locality from \cite[11.4]{baldwinbook09}:

\begin{defin}\label{weakly-local-def}
  Let $\chi$ be a regular cardinal. We say that an AEC $\K$ with a monster model is \emph{weakly $\chi$-local} if for any increasing continuous chain $\seq{M_i : i \le \chi}$ with $M_{i + 1}$ universal over $M_i$ for all $i < \chi$, if $p, q \in \gS (M_\chi)$ are such that $p \rest M_i = q \rest M_i$ for all $i < \chi$, then $p = q$. We say that $\K$ is \emph{weakly $(\ge \chi)$-local} if $\K$ is weakly $\chi'$-local for all regular $\chi' \ge \chi$.
\end{defin}

Note that any $(<\aleph_0)$-tame AEC (such as an elementary class, an AEC derived from homogeneous model theory, or even a universal class \cite{tameness-groups}, see also \cite[3.7]{ap-universal-apal}) is weakly $(\ge \aleph_0)$-local.

\begin{thm}\label{loc-implies-cont}
  Let $\chi < \mu^+$ be a regular cardinal. If $\K$ is weakly $\chi$-local and $\is$ has weak uniqueness, then $\chi \in \bclcc (\is, \ltu)$.
\end{thm}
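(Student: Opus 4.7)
The plan is to build a nonforking extension $q \in \gS(M_\chi)$ of $p \rest M_1$ via Lemma \ref{weak-ext}, verify that $q$ agrees with $p$ on a suitable continuous chain, and then deduce $p = q$ from weak $\chi$-locality.

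Let $\seq{M_i : i < \chi}$ be a $\ltu$-increasing chain in $\K_\mu$, let $M_\chi := \bigcup_{i < \chi} M_i$ (which lies in $\K_\mu$ since $\chi \le \mu$), and suppose $p \in \gS(M_\chi)$ satisfies that $p \rest M_i$ does not fork over $M_0$ for every $i < \chi$. The given chain need not be continuous, so I would first rectify it: set $N_i := M_i$ whenever $i$ is $0$ or a successor, and $N_i := \bigcup_{j < i} M_j$ at a limit $i \le \chi$. Then $\seq{N_i : i \le \chi}$ is continuous with $N_0 = M_0$, $N_\chi = M_\chi$, and $N_i \lea M_i$ for every $i \le \chi$. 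To see that $N_{i+1}$ is universal over $N_i$, the only nontrivial case is a limit $i$: given $N' \in \K_\mu$ with $N_i \lea N'$, I would amalgamate $N'$ with $M_i$ over $N_i$ inside $\K_\mu$ and then apply universality of $M_{i+1}$ over $M_i$ to the amalgam, obtaining an embedding of $N'$ into $N_{i+1} = M_{i+1}$ that fixes $N_i$.

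Since $M_1$ is universal over $M_0$ and $p \rest M_1$ does not fork over $M_0$ by hypothesis, Lemma \ref{weak-ext} applied to $M_0 \lea M_1 \lea M_\chi$ yields $q \in \gS(M_\chi)$ extending $p \rest M_1$ with $q$ not forking over $M_0$. I would then check that $p \rest M_i = q \rest M_i$ for every $i < \chi$: this is immediate for $i \le 1$, while for $i \ge 2$ both $p \rest M_i$ and $q \rest M_i$ do not fork over $M_0$ (the first by hypothesis, the second by monotonicity of $\nf$), they agree on $M_1$, and $M_1$ is universal over $M_0$, so weak uniqueness of $\is$ applied to $M_0 \lea M_1 \lea M_i$ forces equality. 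Monotonicity combined with $N_i \lea M_i$ then propagates this to $p \rest N_i = q \rest N_i$ for all $i < \chi$, and weak $\chi$-locality applied to the continuous universal chain $\seq{N_i : i \le \chi}$ concludes that $p = q$; since $q$ does not fork over $M_0$, neither does $p$.

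The only genuine obstacle is the continuity reduction in the first step: the definition of $\bclcc(\is, \ltu)$ allows arbitrary $\ltu$-increasing chains, whereas weak $\chi$-locality is formulated only for continuous chains with universal successor steps. The short amalgamation argument at limit indices bridges this gap, after which the proof runs smoothly as a combination of weak extension, monotonicity, and weak uniqueness.
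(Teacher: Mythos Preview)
Your proof is correct and follows the same core strategy as the paper: use weak extension to produce a nonforking $q$ extending $p \rest M_1$, invoke weak uniqueness to get $p \rest M_i = q \rest M_i$ for all $i < \chi$, then apply weak $\chi$-locality to conclude $p = q$. The paper's proof in fact begins directly with an increasing \emph{continuous} chain without comment, so your continuity reduction (passing to $N_i$ and verifying universality at limit stages via amalgamation) fills in a step that the paper leaves implicit; otherwise the two arguments are identical.
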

\begin{proof}
  Let $\seq{M_i : i \le \chi}$ be increasing continuous in $\K_\mu$ with $M_{i + 1}$ universal over $M_i$ for all $i < \chi$. Let $p \in \gS (M_\chi)$ and assume that $p \rest M_i$ does not fork over $M_0$ for all $i < \chi$. Let $q \in \gS (M_\chi)$ be an extension of $p \rest M_1$ such that $q$ does not fork over $M_0$. This exists by weak extension (Lemma \ref{weak-ext}). By weak uniqueness, $p \rest M_i = q \rest M_i$ for all $i < \chi$. By weak $\chi$-locality, $p = q$, hence $p$ does not fork over $M_0$, as desired.
\end{proof}

In the rest of this paper, we will often look at $\mu$-splitting. The following notation will be convenient:

\begin{defin}\label{kappa-aec-def}
  Define $\bclc (\K_\mu, \ltu) := \bclc (\issp{\mu} (\K_\mu), \ltu)$. Similarly define the other variations in terms of $\bclcwk$ and $\bclcc$. Also define $\clc (\K_\mu, \ltu)$ and its variations.
\end{defin}

Note that any independence relation with weak uniqueness is extended by non-splitting. This is essentially observed in \cite[4.2]{bgkv-apal} but we give a full proof here for the convenience of the reader.

\begin{lem}\label{canon-lem}
  Assume that $\is$ has weak uniqueness.

  \begin{enumerate}
  \item Let $M_0 \lea M_1 \lea M$ all be in $\K_\mu$ such that $M_1$ is universal over $M_0$ and $M$ is universal over $M_1$. Let $p \in \gS (M)$. If $p$ does not fork over $M_0$, then $p$ does not $\mu$-split over $M_1$.
  \item $\bclc (\is, \ltu) \subseteq \bclc (\K_\mu, \ltu)$, and similarly for $\bclcwk$ and $\bclcc$.
  \end{enumerate}
\end{lem}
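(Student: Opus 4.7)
I would split the argument into two halves, proving (1) first and deducing (2) essentially as a corollary.

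\textbf{For (1)}, the plan is to argue by contradiction. Suppose $p$ does not fork over $M_0$ yet $p$ $\mu$-splits over $M_1$, witnessed by $N_1, N_2 \in \K_{\le \mu}$ and $f \colon N_1 \cong_{M_1} N_2$ with $M_1 \lea N_\ell \lea M$ and $f(p \rest N_1) \neq p \rest N_2$. A preliminary reduction, using that $M \in \K_\mu$ together with amalgamation to extend $f$, lets us assume $N_1, N_2 \in \K_\mu$. Set $q_\ell := p \rest N_\ell$. By monotonicity of $\nf$, both $q_\ell$ fail to fork over $M_0$. Since $M_0 \lea M_1$ and $f$ fixes $M_1$ pointwise, invariance of $\nf$ gives that $f(q_1) \in \gS(N_2)$ also fails to fork over $M_0$. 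On the other hand, $f(q_1) \rest M_1 = q_1 \rest M_1 = p \rest M_1 = q_2 \rest M_1$. Now apply weak uniqueness to the configuration $M_0 \ltu M_1 \lea N_2$ (all in $\K_\mu$): both $f(q_1)$ and $q_2$ lie in $\gS(N_2)$, neither forks over $M_0$, and they agree on the universal extension $M_1$, so they must be equal, contradicting $f(q_1) \neq q_2$.

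\textbf{For (2)}, given $\chi \in \bclc(\is, \ltu)$ and a configuration $\seq{M_i : i < \chi}$, $N$, $p$ meant to witness $\chi \in \bclc(\K_\mu, \ltu)$, apply the defining property of $\bclc(\is, \ltu)$ to obtain $i < \chi$ such that $p$ does not $\is$-fork over $M_i$. Replacing $i$ with $i + 2$ if necessary, we may assume $M_{i+1}$ is universal over $M_i$ and $M_{i+2}$ is universal over $M_{i+1}$. By monotonicity, $p \rest M_{i+2}$ still does not fork over $M_i$, and applying (1) with $M_0 := M_i$, $M_1 := M_{i+1}$, $M := M_{i+2}$ yields that $p \rest M_{i+2}$ does not $\mu$-split over $M_{i+1}$. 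A short transfer — using universality of $M_{i+2}$ over $M_{i+1}$ to embed any hypothetical small witnesses $N_1, N_2 \lea N$ to $\mu$-splitting of $p$ over $M_{i+1}$ into $M_{i+2}$ — upgrades this to non-$\mu$-splitting of the full $p$ over $M_{i+1}$, giving $\chi \in \bclc(\K_\mu, \ltu)$. The inclusions for $\bclcwk$ and $\bclcc$ are obtained by exactly the same template: in the weak case one replaces the occurrence of ``$p$'' by ``$p \rest M_{i+1}$'' and invokes (1); in the continuous case one transfers the non-forking-over-$M_0$ hypothesis through the chain via (1) and concludes.

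The main obstacle is really the invariance and weak-uniqueness juggling in Part (1), together with the mild size-reduction needed so that $N_1, N_2$ actually live in $\K_\mu$ before weak uniqueness can be applied. In Part (2), the subtle point is purely bookkeeping: ``$p$ does not $\mu$-split over $M_j$'' must be interpreted for a type over the set $\bigcup_i |M_i|$ rather than over a model, and the transfer step depends on using universality inside the chain to pull any putative small splitting-witnesses into one of the $M_j$'s where (1) directly applies.
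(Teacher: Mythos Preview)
Your argument for part (1) is correct and matches the paper's proof (the paper phrases it directly rather than by contradiction, but the content is identical). The ``preliminary reduction'' is unnecessary: since $M_1 \in \K_\mu$ and $M_1 \lea N_\ell \in \K_{\le \mu}$, automatically $N_\ell \in \K_\mu$.

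Part (2) has a genuine gap. Your ``transfer'' step --- embedding hypothetical splitting witnesses $N_1, N_2 \lea M_\chi$ into $M_{i+2}$ via universality over $M_{i+1}$ --- does not work as stated. An embedding $g \colon N_\ell \xrightarrow[M_{i+1}]{} M_{i+2}$ gives you $g[N_\ell] \lea M_{i+2}$, but there is no reason that $p \rest g[N_\ell]$ should equal $g(p \rest N_\ell)$: the map $g$ fixes $M_{i+1}$, not the realization of $p$, so the restriction of $p$ to the image is an entirely different type. Thus the pair $g[N_1], g[N_2]$ need not witness splitting of $p \rest M_{i+2}$ over $M_{i+1}$, and the contradiction does not go through.

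The detour is also unnecessary. Since the chain is $\ltu$-increasing in $\K_\mu$, we may assume $\chi < \mu^+$ (otherwise no such chain exists and the statement is vacuous), so $M_\chi := \bigcup_{i < \chi} M_i$ is a model in $\K_\mu$ and is universal over $M_{i+1}$ (it contains $M_{i+2}$). Now apply (1) directly with $M_0 = M_i$, $M_1 = M_{i+1}$, $M = M_\chi$: this gives at once that $p$ does not $\mu$-split over $M_{i+1}$. This is what the paper's terse ``Follows from the first part'' intends. For $\bclcwk$, the same idea works after passing to the cofinal subchain $\seq{M_{2j} : j < \chi}$: you obtain $j$ with $p \rest M_{2j+2}$ not $\is$-forking over $M_{2j}$, and (1) with $M_0 = M_{2j}$, $M_1 = M_{2j+1}$, $M = M_{2j+2}$ yields non-splitting over $M_{2j+1}$. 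Your one-line sketches for $\bclcwk$ and $\bclcc$ are too vague to stand on their own; in particular, for $\bclcc$ note that (1) goes from non-$\is$-forking to non-splitting, which is the wrong direction to convert the hypothesis of the continuity property, so ``the same template'' does not straightforwardly apply.
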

\begin{proof} \
  \begin{enumerate}
  \item Let $N_1, N_2 \in \K_\mu$ and $f : N_1 \cong_{M_1} N_2$ be such that $M_1 \lea N_\ell \lea M$ for $\ell = 1,2$. We want to see that $f (p \rest N_1) = p \rest N_2$. By monotonicity, $p \rest N_\ell$ does not fork over $M_0$ for $\ell = 1,2$. Consequently, $f (p \rest N_1)$ does not fork over $M_0$. Furthermore, $f (p \rest N_1) \rest M_1 = p \rest M_1 = (p \rest N_2) \rest M_1$. Applying weak uniqueness, we get that $f (p \rest N_1) = p \rest N_2$.
  \item Follows from the first part.
  \end{enumerate}
\end{proof}

\section{The stability spectrum of tame AECs}

For an AEC $\K$ with a monster model, we define the \emph{stability spectrum of $\K$}, $\Stab (\K)$ to be the class of cardinals $\mu \ge \LS (\K)$ such that $\K$ is stable in $\mu$. We would like to study it assuming tameness. From earlier work, the following is known about $\Stab (\K)$ in tame AECs:

\begin{fact}\label{stab-spec-facts}
  Let $\K$ be an $\LS (\K)$-tame AEC with a monster model.

  \begin{enumerate}
  \item\label{stab-spec-1} \cite[4.13]{sv-infinitary-stability-afml} If $\Stab (\K) \neq \emptyset$, then $\min (\Stab (\K)) < H_1$ (recall Notation \ref{basic-notation}).
  \item\label{stab-spec-2} \cite[6.4]{tamenessone}\footnote{Grossberg and VanDieren's proof shows that the assumption there that $\mu > H_1$ can be removed, see \cite[Theorem 12.10]{baldwinbook09}.} If $\mu \in \Stab (\K)$ and $\lambda = \lambda^\mu$, then $\lambda \in \Stab (\K)$.
  \item\label{stab-spec-3} \cite[1]{b-k-vd-spectrum} If $\mu \in \Stab (\K)$, then $\mu^+ \in \Stab (\K)$.
  \item\label{stab-spec-4} \cite[5.5]{ss-tame-jsl} If $\seq{\mu_i : i < \delta}$ is strictly increasing in $\Stab (\K)$ and $\cf{\delta} \in \bclc (\K_{\mu_0}, \ltu)$, then $\sup_{i < \delta} \mu_i \in \Stab (\K)$ (see Definition \ref{kappa-aec-def}).
  \end{enumerate}
\end{fact}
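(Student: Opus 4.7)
The plan is to address each of the four clauses using the standard toolkit for tame AECs, summarizing the idea behind each cited result. For (\ref{stab-spec-1}), I would contrapose through the order property: stability in $\mu$ excludes the $\alpha$-order property (Definition \ref{op-def}) for all sufficiently large $\alpha$, since a long order-indiscernible sequence yields too many Galois types; a Hanf-number argument using Shelah's Presentation Theorem and Ehrenfeucht-Mostowski blueprints then propagates failure of the order property downward and delivers stability already at some cardinal below $H_1 = \hanf{\LS(\K)}$.

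Clauses (\ref{stab-spec-2}) and (\ref{stab-spec-3}) are both proved by counting restrictions of Galois types to small submodels under tameness. For (\ref{stab-spec-2}), the argument mirrors the first-order bound $|S(M)| \leq \|M\|^{|T|}$: given $M \in \K_\lambda$ with $\lambda^\mu = \lambda$, each $p \in \gS(M)$ is determined via $\LS(\K)$-tameness by its restrictions to the $\mu$-sized $\lea$-submodels of $M$; there are at most $\lambda^\mu = \lambda$ such submodels, stability in $\mu$ bounds the local type space over each by $\mu$, and the coherent-family encoding yields $|\gS(M)| \leq \lambda$. For (\ref{stab-spec-3}), this coarse count only gives $2^\mu$ when $\|M\| = \mu^+$; Baldwin, Kueker, and VanDieren instead code each $p$ by a non-splitting extension along a carefully chosen $\mu^+$-filtration of $M$, and use existence of universal extensions (granted by stability in $\mu$) together with weak uniqueness of non-splitting (a version of Fact \ref{splitting-weak-uq}) to bound the number of such codes by $\mu^+$.

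For (\ref{stab-spec-4}), the hypothesis $\cf{\delta} \in \bclc(\K_{\mu_0}, \ltu)$ provides continuity of non-$\mu_0$-splitting along $\ltu$-chains of length $\cf{\delta}$ in $\K_{\mu_0}$, which via tameness and weak uniqueness transfers to chains passing through larger cardinals. Given $M \in \K_\lambda$ with $\lambda = \sup_{i<\delta} \mu_i$, one filters $M$ by a chain of length $\cf{\delta}$ of submodels of sizes $\mu_{f(i)}$ for a cofinal $f : \cf{\delta} \to \delta$; for $p \in \gS(M)$, continuity locates a stage at which $p$ does not split over a $\mu_0$-sized base, weak uniqueness plus $\LS(\K)$-tameness then pins $p$ down from a bounded-size restriction, and stability at each $\mu_i$ bounds $|\gS(M)|$ by $\lambda$.

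The main obstacle is clause (\ref{stab-spec-3}): sharpening the naive $2^\mu$ bound to $\mu^+$ is precisely where tameness-assisted fine counting becomes essential, and the tree argument of \cite{b-k-vd-spectrum} must simultaneously juggle limit models, universal extensions, and non-splitting extension chains. The other three clauses follow more directly once the corresponding tools (order-property/Hanf-number theory for (\ref{stab-spec-1}), tame restriction counting for (\ref{stab-spec-2}), and continuity of non-splitting for (\ref{stab-spec-4})) are in place.
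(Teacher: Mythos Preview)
The paper does not prove this statement: it is labeled a \emph{Fact} and each clause is simply attributed to the cited reference, with no argument given in the text. So there is no ``paper's own proof'' to compare against. Your proposal is not wrong as a sketch of what happens in those references, and indeed the ideas you describe (order property and Hanf number for (\ref{stab-spec-1}); tame restriction-counting for (\ref{stab-spec-2}); non-splitting along a $\mu^+$-filtration for (\ref{stab-spec-3}); locality of splitting along a cofinal chain for (\ref{stab-spec-4})) are the right ones. But strictly speaking no proof is expected here, and a correct handling of this item is to cite the references as the paper does.

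One minor correction to your sketch of (\ref{stab-spec-1}): the direction you need is the converse of what you wrote. Stability in \emph{some} $\mu$ rules out the order property (of appropriate length), and then the absence of the order property is what is pushed through the Hanf-number/EM machinery to produce stability below $H_1$. You phrased this correctly in the end, but the opening sentence (``stability in $\mu$ excludes the $\alpha$-order property \ldots\ for all sufficiently large $\alpha$'') is slightly off: what matters is excluding the $\LS(\K)$-order property of arbitrarily long length, and the Hanf-number argument is a compactness-style extraction (Morley's method), not a downward propagation of failure.
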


Let us say that $\K$ is \emph{stable} if $\Stab (\K) \neq \emptyset$. In this case, it is natural to give a name to the first stability cardinal:

\begin{defin}
  For $\K$ an AEC with a monster model, let $\lambda (\K) := \min (\Stab (\K))$ (if $\Stab (\K) = \emptyset$, let $\lambda (\K) := \infty$). 
\end{defin}

At the end of this section, we will define $\K$ to be \emph{superstable} if for all high-enough $\mu$, $\bclc (\K_{\mu}, \ltu)$ contains \emph{all} regular cardinals (Definition \ref{ss-def}). A byproduct of the work here is Corollary \ref{ss-cor}, showing that this is equivalent to being stable on a tail and hence justifying the ``superstable'' terminology. For now we more generally study the (possibly strictly) stable setup.

From Fact \ref{stab-spec-facts}, if $\K$ is an $\LS (\K)$-tame AEC with a monster model, then $\lambda (\K) < \infty$ implies that $\lambda (\K) < H_1$.

We will also rely on the following basic facts:

\begin{fact}[3.12 in \cite{bgkv-apal}]\label{splitting-card}
  Let $\K$ be an $\LS (\K)$-tame AEC with a monster model. For $M \lea N$, $p \in \gS (N)$, $\mu \in [\|M\|, \|N\|]$, $p$ $\mu$-splits over $M$ if and only if $p$ $\|M\|$-splits over $M$.
\end{fact}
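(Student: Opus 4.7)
The right-to-left direction is immediate from the definition: any witness of $\|M\|$-splitting has ambient models in $\K_{\|M\|} \subseteq \K_{\le \mu}$, so it is automatically a witness of $\mu$-splitting. For the nontrivial forward direction, the plan is to start with a $\mu$-sized witness and shrink it using $\LS(\K)$-tameness.

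Suppose $p$ $\mu$-splits over $M$, as witnessed by $N_1, N_2 \in \K_{\le \mu}$ and $f : N_1 \cong_M N_2$ with $f(p \rest N_1) \neq p \rest N_2$. Both $f(p \rest N_1)$ and $p \rest N_2$ are Galois types over $N_2$, and since $\|M\| \ge \LS(\K)$, $\|M\|$-tameness follows from $\LS(\K)$-tameness. Hence there is some $M_0^* \lea N_2$ of size $\|M\|$ on which these two types already disagree. I would then apply the Löwenheim--Skolem axiom inside $N_2$ to obtain $N_2' \lea N_2$ of size $\|M\|$ containing both $|M|$ and $|M_0^*|$; by monotonicity of restriction and the choice of $M_0^*$, the types $f(p \rest N_1)$ and $p \rest N_2$ still disagree when restricted to $N_2'$. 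Next, set $N_1' := f^{-1}[N_2']$, a $\lea$-substructure of $N_1$ of size $\|M\|$, and note that $f \rest N_1' : N_1' \cong_M N_2'$, since $f$ fixes $M$ and $M \subseteq N_1'$.

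The remaining work is bookkeeping: by coherence of AECs, $M \lea N_\ell'$ for $\ell = 1,2$, since $M$ is $\lea$-below $N_\ell$ and sits inside $N_\ell'$. Pushforward of types commutes with restriction along $f$, so
\[ (f \rest N_1')(p \rest N_1') = f(p \rest N_1) \rest N_2' \neq (p \rest N_2) \rest N_2' = p \rest N_2', \]
witnessing $\|M\|$-splitting. The one essential ingredient is $\LS(\K)$-tameness, which supplies the small witness to the type inequality; the only mildly subtle point is remembering to absorb $|M|$ into $N_2'$ during the Löwenheim--Skolem step, so that the resulting witnesses genuinely extend $M$.
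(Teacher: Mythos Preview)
Your proof is correct. Note, however, that the paper does not actually prove this statement: it is recorded as a \emph{Fact} with a citation to \cite{bgkv-apal}, so there is no ``paper's own proof'' to compare against. Your argument is the natural one and presumably matches the cited source: the backward direction is monotonicity in the parameter $\mu$, and the forward direction shrinks a $\mu$-sized witness by first locating via $\LS(\K)$-tameness a small substructure of $N_2$ on which the two types already disagree, then absorbing $M$ into it by L\"owenheim--Skolem, and pulling back along $f$ to get the matching $N_1'$. The coherence step you flag (to conclude $M \lea N_\ell'$ from $M \subseteq N_\ell' \lea N_\ell$ and $M \lea N_\ell$) is indeed needed and you handle it correctly.
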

\begin{fact}[3.3 in \cite{sh394}]\label{splitting-lc}
  Let $\K$ be an AEC with a monster model. Assume that $\K$ is stable in $\mu \ge \LS (\K)$. For any $M \in \K_{\ge \mu}$ and any $p \in \gS (M)$, there exists $M_0 \lea M$ with $M_0 \in \K_\mu$ such that $p$ does not $\mu$-split over $M_0$.  
\end{fact}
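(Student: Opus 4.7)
The plan is a proof by contradiction modeled on Shelah's tree-of-types argument. Suppose $p \in \gS(M)$ $\mu$-splits over every $M_0 \lea M$ with $M_0 \in \K_\mu$. I will produce strictly more than $\mu$ pairwise distinct Galois types over a single model of size $\mu$, contradicting stability in $\mu$.

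The construction proceeds by recursion on $\alpha < \mu^+$ (or, in the cleanest presentation, along a binary tree $\sets{<\mu^+}{2}$). First build an increasing continuous chain $\seq{M_\alpha : \alpha < \mu^+}$ with each $M_\alpha \in \K_\mu$ and $M_\alpha \lea M$, starting from any $M_0 \lea M$ obtained by downward Löwenheim-Skolem. At a successor stage $\alpha$, the assumed failure of non-$\mu$-splitting over $M_\alpha$ gives $N_\alpha^0, N_\alpha^1 \in \K_\mu$ with $M_\alpha \lea N_\alpha^\ell \lea M$ and an isomorphism $h_\alpha : N_\alpha^0 \cong_{M_\alpha} N_\alpha^1$ such that $h_\alpha(p \rest N_\alpha^0) \neq p \rest N_\alpha^1$; amalgamate $N_\alpha^0 \cup N_\alpha^1$ into $M_{\alpha+1}$ inside $M$. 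At limits take unions. Fix $a$ realizing $p$ in $\sea$ and extend each $h_\alpha$ to an automorphism $\hat h_\alpha$ of $\sea$ fixing $M_\alpha$. The key computation is that $\gtp(\hat h_\alpha(a) / N_\alpha^1) = h_\alpha(\gtp(a / N_\alpha^0)) = h_\alpha(p \rest N_\alpha^0) \neq p \rest N_\alpha^1 = \gtp(a / N_\alpha^1)$, so the two extensions of $p \rest M_\alpha$ given by $\gtp(a/M_{\alpha+1})$ and $\gtp(\hat h_\alpha(a)/M_{\alpha+1})$ differ.

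To convert these ``local'' type-disagreements into many types over a single fixed model of size $\mu$, the construction is upgraded to a tree: for $\eta \in \sets{<\mu^+}{2}$ build $M_\eta \in \K_\mu$, increasing continuous along branches, using the splitting witness at $M_\eta$ to manufacture two extensions and twisting by $\hat h_\eta$ on one side. Pulling all trajectories back through the initial automorphisms to a fixed small reference model $M^* := M_{\emptyset}$, the $2^\mu$ distinct branches give rise to $2^\mu > \mu$ pairwise distinct Galois types over $M^*$ (distinctness at a branch-split is inherited because the disagreement at any node $\eta$ is witnessed already over a submodel of $M_{\eta}$, which pulls back into $M^*$). Since $M^* \in \K_\mu$, this contradicts stability in $\mu$.

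The principal obstacle is the last combinatorial step: ensuring that all $2^\mu$ type-witnesses live over a single model of size $\mu$ rather than being spread out along each branch. The standard trick is to keep the underlying branching models the same and let the $\hat h_\eta$'s do the type-twisting, so that transporting everything back to the root via the inverse automorphisms yields distinct types all over $M_{\emptyset}$. A slicker alternative — avoiding $2^\mu$ altogether — is to extract from the $\seq{\hat h_\alpha(a) : \alpha < \mu^+}$ a configuration witnessing the $1$-order property of Definition \ref{op-def}, after which one invokes the (tameness-free) equivalence between the order property and instability to finish.
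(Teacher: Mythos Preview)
The paper does not prove this statement; it is quoted as a Fact with a citation to Shelah \cite[3.3]{sh394}. Your tree-of-types strategy is indeed the standard route, but the sketch has genuine gaps beyond bookkeeping. First, a slip: you index the tree by $\fct{<\mu^+}{2}$ yet count $2^\mu$ branches --- the tree must have height $\mu$, so that each branch-union stays in $\K_\mu$ while there are still $2^\mu > \mu$ branches. More seriously, the claim that the types can be ``pulled back to $M^\ast = M_\emptyset$'' is backwards: the automorphisms $\hat h_\eta$ all fix $M_\emptyset$, so every twisted realization has the \emph{same} type over $M_\emptyset$. The pairwise-distinct types must live over the \emph{top} model $N_\mu := \bigcup_{\alpha<\mu} N_\alpha$, not the root.

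Two further points need real work. At successor nodes, the splitting hypothesis tells you only that $p$ (hence $\gtp(a/\,\cdot\,)$) has two extensions; it says nothing directly about an already-twisted element $a_\eta$. You must maintain, for each $\eta$, an automorphism $g_\eta$ with $g_\eta^{-1}[N_{\ell(\eta)}] \lea M$, so that the conjugated hypothesis applies to $g_\eta(p)$ and furnishes two extensions of $\gtp(a_\eta/N_{\ell(\eta)})$. At limit nodes you cannot simply compose infinitely many automorphisms, nor take a syntactic union of Galois types; one needs a direct-limit construction inside the monster to produce $a_\eta$ realizing the coherent tower below. Finally, your order-property alternative as written yields only the $\mu$-order property (the witnessing tuples enumerate models of size $\mu$), and converting that to instability in $\mu$ specifically --- rather than eventual instability --- is itself nontrivial; the paper's Fact \ref{op-equiv-fact} concerns the $\LS(\K)$-order property under tameness.
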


It is natural to look at the sequence $\seq{\bclc (\K_{\mu}, \ltu) : \mu \in \Stab (\K)}$ (recall Definition \ref{kappa-aec-def}). From \cite[\S4]{ss-tame-jsl}, we have that:

\begin{fact}
  Let $\K$ be an $\LS (\K)$-tame AEC with a monster model. If $\mu < \lambda$ are both in $\Stab (\K)$, then $\bclc (\K_\mu, \ltu) \subseteq \bclc (\K_\lambda, \ltu)$.
\end{fact}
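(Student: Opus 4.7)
The plan is to exploit $\LS(\K)$-tameness (via Fact \ref{splitting-card}) to reduce non-$\lambda$-splitting over a $\lambda$-sized base to non-$\mu$-splitting over a $\mu$-sized subbase, and then to apply the hypothesis $\chi \in \bclc (\K_\mu, \ltu)$ to a parallel chain built inside the given one. Let $\seq{M_i : i < \chi}$ be $\ltu$-increasing in $\K_\lambda$, $N \in \K$ an upper bound, and $p \in \gS (\bigcup_i M_i; N)$; set $M_\chi := \bigcup_i M_i$. We may assume $\chi \le \mu$, as otherwise a $\ltu$-chain in $\K_\mu$ of length $\chi$ does not fit and the hypothesis is vacuous. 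By Fact \ref{splitting-card} combined with the routine fact that non-$\lambda$-splitting descends under $\lea$-enlargements of its base, it suffices to produce $j < \chi$ and $M_j' \lea M_j$ in $\K_\mu$ such that $p$ does not $\mu$-split over $M_j'$.

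To this end I would inductively construct a $\ltu$-increasing chain $\seq{M_i' : i < \chi}$ in $\K_\mu$ with $M_i' \lea M_i$. Base and limit steps are routine; limits stay in $\K_\mu$ because $\chi \le \mu$. At successors, the key observation is that $\ltu$-universality of $M_{i+1}$ over $M_i$ in $\K_\lambda$ induces $\ltu$-universality of $M_{i+1}$ over $M_i'$ in $\K_\mu$: any $\mu$-sized $\lea$-extension of $M_i'$ can be amalgamated with $M_i$ over $M_i'$ in $\K_\lambda$, and the amalgam embeds into $M_{i+1}$ over $M_i$, hence also over $M_i'$. Using this together with stability in $\mu$ (which yields universal extensions in $\K_\mu$), pick $M_{i+1}' \lea M_{i+1}$ of size $\mu$ that is $\ltu$-universal over $M_i'$ in $\K_\mu$, and let $M_\chi' := \bigcup_i M_i'$.

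Applying $\chi \in \bclc (\K_\mu, \ltu)$ to $\seq{M_i'}$ and $p \rest M_\chi' \in \gS (M_\chi')$ produces $j < \chi$ with $p \rest M_\chi'$ not $\mu$-splitting over $M_j'$. The final step is to upgrade this to $p$ itself not $\mu$-splitting over $M_j'$: any hypothetical $\mu$-sized witness $\bar N_1 \cong_{M_j'} \bar N_2$ of $\mu$-splitting of $p$ over $M_j'$, with $\bar N_\ell \lea M_\chi$, should be pullable back into $M_\chi'$ over $M_j'$ using the $\ltu$-universality of the $\K_\mu$-chain, and $\LS (\K)$-tameness should then guarantee that the pullback genuinely witnesses $\mu$-splitting of $p \rest M_\chi'$ over $M_j'$, contradicting the outcome of the application. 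Setting $M_j^\flat := M_j'$ closes the argument via the opening reduction. I expect the main obstacle to be precisely this type-preserving pullback: universality yields embeddings abstractly, but ensuring that the type inequality survives them requires tameness and, potentially, an appeal to weak uniqueness of non-splitting in the spirit of Lemma \ref{canon-lem}.
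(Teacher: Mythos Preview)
The paper does not prove this statement; it is quoted as a fact from \cite[\S4]{ss-tame-jsl}. So there is no in-paper proof to compare against directly. That said, your outline captures the right architecture (pass to a $\ltu$-increasing sub-chain in $\K_\mu$, apply the hypothesis, then lift back), and this is indeed the skeleton of the argument in the cited reference. Two points deserve comment.

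First, a minor correction: the case $\chi > \mu$ is not vacuous. Such chains exist in $\K_\mu$, and the hypothesis $\chi \in \bclc(\K_\mu,\ltu)$ is nontrivial (though automatic by Fact~\ref{splitting-lc}). More importantly, you still owe the conclusion $\chi \in \bclc(\K_\lambda,\ltu)$ in that case; this does follow from Fact~\ref{splitting-lc} plus Fact~\ref{splitting-card} and base monotonicity (the $\mu$-sized non-splitting base sits inside some $M_i$ since $\chi > \mu$ is regular), but you should say so.

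Second, and this is the real issue: your step~6 is a genuine gap, and your proposed ``pullback'' does not work as stated. If $N_1, N_2, f$ witness $\mu$-splitting of $p$ over $M_j'$ and you embed $N_\ell$ into $M_\chi'$ over $M_j'$ via some $g_\ell$, there is no reason for $p \rest g_\ell[N_\ell]$ to bear any relationship to $p \rest N_\ell$: the embeddings $g_\ell$ are abstract and do not preserve the ambient type $p$. Tameness alone does not rescue this, and invoking weak uniqueness in the spirit of Lemma~\ref{canon-lem} is circular here, since applying uniqueness to conclude $p = q$ (for $q$ the non-$\mu$-splitting extension of $p \rest M_{j+1}'$) would require already knowing that $p$ does not $\mu$-split over $M_j'$.

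The argument in \cite[\S4]{ss-tame-jsl} avoids this by building the sub-chain $\seq{M_i'}$ more carefully: one arranges during the construction that whenever $p$ $\mu$-splits over $M_k'$, a witness to this (shrunk to size $\LS(\K)$ via tameness, then enlarged back to contain $M_k'$) is absorbed into a later $M_i'$. With this bookkeeping in place, the statement ``$p \rest M_\chi'$ does not $\mu$-split over $M_j'$'' immediately upgrades to ``$p$ does not $\mu$-split over $M_j'$'', since any putative witness would already live inside $M_\chi'$. One then passes through the $\mu$-forking relation on $\K_\lambda$ (which has weak uniqueness by Fact~\ref{splitting-weak-uq}) and concludes via Lemma~\ref{canon-lem}. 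Your instinct that weak uniqueness and Lemma~\ref{canon-lem} are relevant is correct, but they enter at the end of the argument, not as a repair for the pullback.
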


Thus we define:

\begin{defin}\label{chi-def}
  For $\K$ an $\LS (\K)$-tame AEC with a monster model, let $\uchi (\K) := \bigcup_{\mu \in \Stab (\K)} \bclc (\K_{\mu (\K)}, \ltu)$. Let $\chi (\K)$ be the least regular cardinal $\chi$ such that $\chi' \in \uchi (\K)$ for any regular $\chi' \ge \chi$. Set $\uchi (\K) := \emptyset$ and $\chi (\K) := \infty$ if $\lambda (\K) = \infty$. 
\end{defin}
\begin{remark}\label{chi-rmk}
  By Fact \ref{splitting-lc}, $\chi (\K) \le \lambda (\K)^+$. Assuming continuity of splitting, we can prove that $\chi (\K) \le \lambda (\K)$ (see Theorem \ref{locality-card-bounds}).
\end{remark}
\begin{remark}
  If $\K$ comes from a first-order theory, then $\uchi (\K)$ is the set of regular cardinals greater than or equal to $\kappa_r (T)$, see Corollary \ref{fo-cor}.
\end{remark}

Fact \ref{splitting-lc} implies more generally that $[\lambda (\K), \infty) \cap \REG \subseteq \bclc (\K_\mu, \ltu)$ for any stability cardinal $\mu$. Thus we can let $\lambda' (\K)$ be the first place where the sequence of $\bclc (\K_\mu, \ltu)$ stabilizes. One can think of it as the first ``well-behaved'' stability cardinal.

\begin{defin}\label{lambdap-def}
  For $\K$ an $\LS (\K)$-tame AEC with a monster model, let $\lambda' (\K)$ be the least stability cardinal $\lambda$ such that $\bclc (\K_{\mu}, \ltu) \subseteq \bclc (\K_{\lambda}, \ltu)$ for all $\mu \in \Stab (\K)$. When $\lambda (\K) = \infty$, we set $\lambda' (\K) = \infty$.
\end{defin}

We do not know whether $\lambda' (\K) = \lambda (\K)$. In fact, while we know that $\lambda' (\K) < \infty$ if $\lambda (\K) < \infty$, we are unable to give any general bound at all on $\lambda' (\K)$. Assuming continuity of splitting, we can show that $\lambda' (\K) < \hanf{\lambda (\K)}$ (see Theorem \ref{locality-card-bounds}).

In this section, we prove what we can on $\uchi (\K)$ \emph{without} assuming continuity of splitting. Section \ref{cont-sec-thy} will prove more assuming continuity of splitting.

We will use the following fact, whose proof relies on the machinery of averages for tame AECs:

\begin{fact}[5.15,5.16 in \cite{bv-sat-afml}]\label{bv-sat}
  Let $\K$ be an $\LS (\K)$-tame AEC with a monster model.

  There exists a stability cardinal $\chi_0 < H_1$ such that for any $\mu > \mu_0 \ge \chi_0$, if:

  \begin{enumerate}
  \item\label{bv-sat-1} $\K$ is stable in unboundedly many cardinals below $\mu$.
  \item $\K$ is stable in $\mu_0$ and $\cf{\delta} \in \bclc (\K_{\mu_0}, \ltu)$
  \end{enumerate}

  then whenever $\seq{M_i : i < \delta}$ is an increasing chain of $\mu$-saturated models, we have that $\bigcup_{i < \delta} M_i$ is $\mu$-saturated.
\end{fact}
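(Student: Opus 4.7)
The aim is to show $M := \bigcup_{i<\delta} M_i$ is $\mu$-saturated. Fix $p \in \gS(A)$ with $A \subseteq M$ and $|A| < \mu$; the goal is a realization of $p$ inside $M$. The plan is to choose $\chi_0 < H_1$ a stability cardinal (existing by Fact \ref{stab-spec-facts}(\ref{stab-spec-1})), chosen large enough that the theory of averages for $\LS(\K)$-tame AECs developed in \cite{bv-sat-v3} is available whenever all auxiliary cardinals sit at or above $\chi_0$. Replacing the chain by a cofinal subchain, I may assume $\delta = \chi := \cf{\delta}$.

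The easy case is $\chi \ge \mu$: regularity gives $A \subseteq M_i$ for some $i < \chi$, and $p$ is realized by $\mu$-saturation of $M_i$. Everything below handles the main case $\chi < \mu$.

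Using hypothesis (1) I first pick a stability cardinal $\mu_1$ with $\mu_0 \le \mu_1 < \mu$ and $\mu_1 \ge |A| + \chi$ (possible since $|A| + \chi < \mu$ and stability cardinals below $\mu$ are unbounded). The monotonicity of $\bclc(\K_{(-)}, \ltu)$ along the stability spectrum (the unlabeled fact preceding Definition \ref{chi-def}) upgrades $\chi \in \bclc(\K_{\mu_0}, \ltu)$ to $\chi \in \bclc(\K_{\mu_1}, \ltu)$. Exploiting $\mu$-saturation of each $M_i$ together with $\mu_1 < \mu$ (so universal extensions of $\mu_1$-sized models are available internally to $M_i$), I inductively build a continuous $\ltu$-increasing chain $\seq{M_i' : i < \chi}$ in $\K_{\mu_1}$ with $M_i' \lea M_i$, $A \cap M_i \subseteq M_i'$, and $M_{i+1}'$ universal over $M_i'$. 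Set $N := \bigcup_{i<\chi} M_i'$; then $A \subseteq N \lea M$ and $\|N\| = \mu_1$. Fix any extension $\hat p \in \gS(N)$ of $p$; by $\chi \in \bclc(\K_{\mu_1}, \ltu)$, $\hat p$ does not $\mu_1$-split over some $M_{i_0}'$.

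The main obstacle is then to exhibit a realization of $\hat p$ inside $M$. The strategy is to build a Morley-like sequence $\seq{a_j : i_0 < j < \chi}$ inside $M$ by using $\mu$-saturation of $M_j$ to pick $a_j \in M_j$ realizing $\hat p \rest M_j'$ (possible since $\|M_j'\| = \mu_1 < \mu$). Weak uniqueness of non-$\mu_1$-splitting extensions (Fact \ref{splitting-weak-uq}), $\LS(\K)$-tameness, and non-splitting of $\hat p$ over $M_{i_0}'$ should coordinate the $a_j$ into a sequence sufficiently indiscernible over $M_{i_0}'$ that its average $\Av(\seq{a_j} / N)$, in the sense of \cite{bv-sat-v3}, coincides with $\hat p$. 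The averages machinery then delivers a realization of $\hat p$ among (essentially) the $a_j$ themselves, which sit in $M$; this yields a realization of $p$ and completes the argument. The technical heart is verifying the indiscernibility/average-type identification: converting the internal non-splitting chain and the $\mu$-saturated ambient chain into input suitable for the averages theory, and confirming that the resulting realization really lives in $M$ rather than only in the monster model. This is also where the specific choice of $\chi_0$ above the relevant Hanf-number thresholds is used.
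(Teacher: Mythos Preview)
The paper does not prove this statement: it is quoted verbatim as a Fact from \cite{bv-sat-v3} and used as a black box (the surrounding text even says ``the proof relies on the machinery of averages for tame AECs''). So there is no in-paper proof to compare your proposal against.

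That said, your outline is a fair high-level reconstruction of the argument in the cited source. The reduction to $\delta = \cf{\delta}$, the case split on $\chi \ge \mu$, the construction of an auxiliary $\ltu$-chain $\seq{M_i'}$ in $\K_{\mu_1}$ sitting inside the $M_i$'s, the use of $\chi \in \bclc(\K_{\mu_1},\ltu)$ to find a non-splitting base, and the Morley-sequence-plus-averages strategy are all the right moves. You are also right that the monotonicity fact preceding Definition~\ref{chi-def} is what transfers locality from $\mu_0$ to $\mu_1$.

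Where your sketch is genuinely incomplete is exactly where you say it is: the identification of $\Av(\seq{a_j}/N)$ with $\hat p$, and the claim that this average is realized by some $a_j$. In the AEC setting there is no compactness, so ``average type'' has to be defined carefully (via a suitable $\chi_0$-saturation/Hanf-number threshold so that long enough Morley sequences become strictly indiscernible and the average is well-defined and realized on a tail). The role of $\chi_0$ is precisely to sit above these thresholds, and pinning this down is most of the work in \cite{bv-sat-v3}. Your proposal correctly locates the difficulty but does not discharge it; since the present paper also does not discharge it, this is the appropriate level of detail here.
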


The following is the key result.

\begin{thm}\label{key-thm}
  Let $\K$ be an $\LS (\K)$-tame AEC with a monster model. Let $\chi_0 < H_1$ be as given by Fact \ref{bv-sat}. For any $\mu > \chi_0$, if $\K$ is stable in $\mu$ and in unboundedly many cardinals below $\mu$, then $\cf{\mu} \in \bclc (\K_\mu, \ltu)$.
\end{thm}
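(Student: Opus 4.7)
Let $\chi := \cf{\mu}$, fix a $\ltu$-increasing chain $\seq{M_i : i < \chi}$ in $\K_\mu$ with union $M_\chi := \bigcup_{i < \chi} M_i$, and take $p \in \gS (M_\chi)$. The goal is to locate $i < \chi$ such that $p$ does not $\mu$-split over $M_i$. My plan is to use stability at a cardinal strictly below $\chi$ to extract a small base for non-splitting, pigeonhole that base inside a single $M_i$ using regularity of $\chi$, and close by base monotonicity of non-$\mu$-splitting.

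Assume first that $\chi > \chi_0$. By Fact \ref{splitting-lc} (applied at the stability cardinal $\chi_0$) there is $M' \lea M_\chi$ with $M' \in \K_{\chi_0}$ and $p$ not $\chi_0$-splitting over $M'$; Fact \ref{splitting-card} ($\LS (\K)$-tameness) converts this to non-$\mu$-splitting of $p$ over $M'$. Since $\chi$ is regular and $\|M'\| = \chi_0 < \chi$, decomposing $|M'| = \bigcup_{i < \chi}(|M'| \cap |M_i|)$ and invoking regularity produces $i < \chi$ with $|M'| \subseteq |M_i|$, and coherence of $\K$ promotes this to $M' \lea M_i$. Base monotonicity of non-$\mu$-splitting — which is axiomatic for independence relations in Section~2.3 and is trivially checked for splitting, since any splitting witness over $M_i$ is a fortiori a splitting witness over $M'$ — then yields that $p$ does not $\mu$-split over $M_i$.

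The principal obstacle is the complementary case $\chi \le \chi_0$, in which no stability cardinal need lie strictly below $\chi$ (for instance when $\lambda (\K) \ge \chi$) and the pigeonhole step collapses because every available base $M'$ has $\|M'\| \ge \chi$. My plan for this case is to use Fact \ref{bv-sat} to upgrade $M_\chi$ itself to a $\mu$-saturated model. Fix a stability cardinal $\mu_0 \in [\chi_0, \mu)$ and a regular $\delta \in (\mu_0, \mu)$; running the first-case argument at level $\mu_0$ with $\delta$ in place of $\chi$ gives $\delta \in \bclc (\K_{\mu_0}, \ltu)$, so Fact \ref{bv-sat} certifies that unions of $\ltu$-chains of $\mu$-saturated models in $\K_\mu$ of length $\delta$ are $\mu$-saturated. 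The hypothesis of stability in $\mu$ together with stability in unboundedly many cardinals below $\mu$ provides enough $\mu$-saturated models to build such a chain cofinal in — and whose union equals — $M_\chi$, forcing $M_\chi$ to be $\mu$-saturated. The final step, converting $\mu$-saturation of $M_\chi$ into the existence of $i < \chi$ with $p$ non-$\mu$-splitting over $M_i$, is the technical crux and will almost certainly require the averages machinery of \cite{bv-sat-v3, gv-superstability-v4} underlying Fact \ref{bv-sat}; I expect this to be the hardest part of the proof.
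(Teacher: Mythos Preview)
Your Case 1 ($\chi > \chi_0$) is correct and in fact more elementary than the paper's uniform approach: you bypass Fact \ref{bv-sat} entirely by pigeonholing a size-$\chi_0$ base for non-splitting (from Fact \ref{splitting-lc}) into some $M_i$.

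Case 2, however, has a genuine gap. You want to exhibit $M_\chi$ as the union of a length-$\delta$ chain of $\mu$-saturated models (with $\delta > \chi$), but there is no reason an \emph{arbitrary} $(\mu,\chi)$-limit model $M_\chi$ should contain any $\mu$-saturated proper $\lea$-substructure of size $\mu$, let alone a cofinal chain of them. In $\K_\mu$, being $\mu$-saturated means being saturated, and whether $M_\chi$ is saturated is precisely what is at stake. Your step ``build such a chain cofinal in --- and whose union equals --- $M_\chi$'' is therefore circular.

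The paper repairs this by reversing the order of construction. First use Theorem \ref{sat-existence} (this is where Fact \ref{bv-sat} actually enters) to produce \emph{some} saturated $M \in \K_\mu$. Then, using stability in unboundedly many cardinals below $\mu$, resolve $M$ as an increasing continuous chain $\seq{N_i : i \le \chi}$ with $N_i \in \K_{<\mu}$ and $N_{i+1}$ universal over $N_i$; a back-and-forth argument shows $M$ is $(\mu,\chi)$-limit. Uniqueness of limit models of the same cofinality then transfers saturation from $M$ to every $(\mu,\chi)$-limit model --- in particular to your $M_\chi$.

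Your ``technical crux'' is also misdiagnosed. Once $M_\chi$ is saturated (hence $(\LS(\K)+\chi)^+$-saturated, since here $\chi \le \chi_0 < \mu$), extracting $i < \chi$ with $p$ not $\mu$-splitting over $M_i$ is exactly Lemma \ref{key-lc-lem}, an elementary argument: a putative splitting at every stage is witnessed by small pieces, which are all absorbed into a single $N \lea M_\chi$ of size $\LS(\K)+\chi$; realize $p \rest N$ inside $M_\chi$ by saturation and read off a contradiction. The averages machinery lives entirely inside Fact \ref{bv-sat} and is used only to manufacture the saturated model, not in this last step.
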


The proof of Theorem \ref{key-thm} will use the lemma below, which improves on \cite[3.17]{gv-superstability-jsl}.

\begin{lem}\label{key-lc-lem}
  Let $\K$ be an $\LS (\K)$-tame AEC with a monster model. Let $\delta$ be a limit ordinal and let $\seq{M_i : i \le \delta}$ be an increasing continuous sequence. If $M_\delta$ is $(\LS (\K) + \delta)^+$-saturated, then for any $p \in \gS (M_\delta)$, there exists $i < \delta$ such that $p$ does not $\|M_i\|$-split over $M_i$.
\end{lem}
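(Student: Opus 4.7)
The plan is to argue by contradiction, using $\LS(\K)$-tameness to reduce the splitting witnesses to $\LS(\K)$-sized pieces, and then exploiting the $\mu^+$-saturation of $M_\delta$ (where $\mu := \LS(\K) + |\delta|$) to realize those pieces inside $M_\delta$. The goal is to build enough internal ``copies'' of the splitting data that a counting argument forces a contradiction.

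Suppose toward a contradiction that $p$ $\|M_i\|$-splits over $M_i$ for every $i < \delta$, and fix a realization $a \in \sea$ of $p$. For each $i$, unwinding Definition \ref{splitting-def} and extending the witnessing isomorphism $f_i\colon N_1^i \cong_{M_i} N_2^i$ across $a$ yields an element $a_i \in \sea$ with $\gtp(a_i/M_i) = \gtp(a/M_i)$ but $\gtp(a_i/N_2^i) \neq \gtp(a/N_2^i)$, where $N_2^i \lea M_\delta$ has size at most $\|M_i\|$. Then $\LS(\K)$-tameness provides a model $L_i \lea N_2^i$ of size $\LS(\K)$ witnessing $\gtp(a_i/L_i) \neq \gtp(a/L_i)$, and by enlarging $L_i$ inside $N_2^i$ I may assume $M_i^* \lea L_i$ for some fixed $\LS(\K)$-sized $M_i^* \lea M_i$, while preserving $\gtp(a_i/M_i^*) = \gtp(a/M_i^*)$.

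Since $\left|\bigcup_{i<\delta}|L_i|\right| \le \mu$, I can take $N_A \lea M_\delta$ of size $\mu$ containing all the $L_i$, and by $\mu^+$-saturation realize $p \rest N_A$ by some $a^* \in M_\delta$ and each $\gtp(a_i/L_i)$ by some $a_i^* \in M_\delta$. This gives, for every $i < \delta$, elements $a^*, a_i^* \in M_\delta$ whose Galois types agree over $M_i^*$ but disagree over $L_i$ — an internal splitting configuration fully residing in $M_\delta$.

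The main obstacle is extracting a contradiction from this internal data. My plan is to iterate: with $a, M_\delta, M_i$ replaced by $a^*, N_A, M_i^*$, repeat the argument to produce refinements of the splitting, using $\LS(\K)$-tameness to maintain genuine distinctness of the resulting Galois types at the $\LS(\K)$-level. The pairwise-distinct types built this way must all be realized in $M_\delta$ thanks to $\mu^+$-saturation; but by a careful counting along the chain $\seq{M_i^* : i < \delta}$ (all sitting inside $N_A$ of size $\mu$), the number of distinct types over $N_A$ so produced overflows what $M_\delta$ can accommodate, yielding the desired contradiction. The delicate part is arranging the $M_i^*$ and $L_i$ coherently along the chain so the iterated splittings compose, which should be possible by transfinite recursion using the continuity of $\seq{M_i : i \le \delta}$ together with the $\mu^+$-saturation of $M_\delta$.
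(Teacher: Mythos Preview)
Your setup through the third paragraph is essentially correct and matches the paper's: use tameness to shrink the splitting witnesses to size $\LS(\K)$, gather them into a single $N_A \lea M_\delta$ of size $\mu$, and use $\mu^+$-saturation to realize $p \rest N_A$ by some $a^\ast \in M_\delta$. But from that point on you take a detour that is both unnecessary and incomplete.

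The missing idea is a one-line observation that finishes the proof immediately. Since $a^\ast \in M_\delta = \bigcup_{i<\delta} M_i$, there is some $i < \delta$ with $a^\ast \in |M_i|$. For that $i$, keep the original splitting isomorphism $f_i : N_1^i \cong_{M_i} N_2^i$ (with small witnesses $M_1^i, M_2^i \lea N_A$ and $f_i[M_1^i] = M_2^i$). Because $f_i$ fixes $M_i$ pointwise, it fixes $a^\ast$; hence
\[
p \rest M_2^i \;=\; \gtp(a^\ast / M_2^i) \;=\; \gtp(f_i(a^\ast)/f_i[M_1^i]) \;=\; f_i\bigl(\gtp(a^\ast/M_1^i)\bigr) \;=\; f_i(p \rest M_1^i),
\]
contradicting $f_i(p \rest M_1^i) \neq p \rest M_2^i$. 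No iteration, no counting.

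Your reformulation discards exactly the piece of data needed for this: by replacing the isomorphism $f_i$ with an auxiliary element $a_i$ and then with a realization $a_i^\ast \in M_\delta$, you retain only type-theoretic information (agreement over $M_i^\ast$, disagreement over $L_i$), and that information alone is not contradictory. The proposed ``iterate and count'' plan is where the gap lies: you never specify what quantity is supposed to exceed what, and there is no stability or cardinality hypothesis on $M_\delta$ or on $\gS(N_A)$ to overflow. The chain $\seq{M_i^\ast}$ is not increasing or continuous, so the recursive step cannot be set up as described. In short, keep $f_i$ around and use that $a^\ast$ lands in some $M_i$; the contradiction is immediate.
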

\begin{proof}
  Assume for a contradiction that $p \in \gS (M_\delta)$ is such that $p$ $\|M_i\|$-splits over $M_i$ for every $i < \delta$. Then for every $i < \delta$ there exists $N_1^i, N_2^i, f_i$ such that $M_i \lea N_\ell^i \lea M$, $\ell = 1,2$, $f_i : N_1^i \cong_{M_i} N_2^i$, and $f_i (p \rest N_1^i) \neq p \rest N_2^i$. By tameness, there exists $M_1^i \lea N_1^i, M_2^i \lea N_2^i$ both in $\K_{\le \LS (\K)}$ such that $f_i[M_1^i] = M_2^i$ and $f_i (p \rest M_1^i) \neq p \rest M_2^i$.

  Let $N \lea M$ have size $\mu := \LS (\K) + \delta$ and be such that $M_\ell^i \lea N$ for $\ell = 1,2$ and $i < \delta$.

  Since $M_\delta$ is $\mu^+$-saturated, there exists $b \in |M_\delta|$ realizing $p \rest N$. Let $i < \delta$ be such that $b \in |M_i|$. By construction, we have that $f_i (p \rest M_1^i) \neq p \rest M_2^i$ but on the other hand $p \rest M_\ell^i = \gtp (b / M_\ell^i; M)$ and $f_i (p \rest M_1^i) = \gtp (b / M_2^i; M)$, since $f_i (b) = b$ (it fixes $M_i$). This is a contradiction.
\end{proof}

Before proving Theorem \ref{key-thm}, we show that Fact \ref{bv-sat} implies saturation of long-enough limit models:

\begin{thm}\label{sat-existence}
  Let $\K$ be an $\LS (\K)$-tame AEC with a monster model. Let $\chi_0 < H_1$ be as given by Fact \ref{bv-sat}. Let $\mu > \mu_0 \ge \chi_0$ be such that $\K$ is stable in $\mu_0, \mu$, and in unboundedly many cardinals below $\mu$.

  Then any $(\mu, \bclc (\K_{\mu_0}, \ltu) \cap \mu^+)$-limit model (see Section \ref{univ-sec}) is saturated. In particular, there is a saturated model of cardinality $\mu$.
\end{thm}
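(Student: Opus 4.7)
The plan is to establish the main claim—that every $(\mu, \gamma)$-limit model $M$ with $\gamma \in \bclc(\K_{\mu_0}, \ltu) \cap \mu^+$ regular is $\mu$-saturated. The ``in particular'' existence claim then follows since stability in $\mu$ guarantees that at least one such $(\mu, \gamma)$-limit model exists.

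Write $M = \bigcup_{i < \gamma} M_i$ where each $M_{i+1}$ is $\ltu$-above $M_i$ and each $M_i \in \K_\mu$. Given $N \lea M$ with $\|N\| < \mu$ and $p \in \gS(N)$, the aim is to realize $p$ in $M$. Using amalgamation, I first extend $p$ to some $\tilde p \in \gS(M)$. The previously-stated monotonicity $\bclc(\K_{\mu_0}, \ltu) \subseteq \bclc(\K_\mu, \ltu)$ for stability cardinals $\mu_0 \le \mu$ gives $\gamma \in \bclc(\K_\mu, \ltu)$; applied to the $\ltu$-chain $\seq{M_i : i < \gamma}$, this yields some $i_0 < \gamma$ such that $\tilde p$ does not $\mu$-split over $M_{i_0}$.

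Next, I use universality of $M_{i_0+2}$ over $M_{i_0+1}$ to pick $a \in M_{i_0+2}$ realizing $\tilde p \rest M_{i_0+1}$. The plan is to conclude $\gtp(a/M) = \tilde p$ via Fact \ref{splitting-weak-uq} (which gives weak uniqueness of non-$\mu$-splitting extensions, applicable by $\LS(\K)$-tameness and universality of $M_{i_0+1}$ over $M_{i_0}$). Since $\gtp(a/M)$ and $\tilde p$ agree on $M_{i_0+1}$ by choice of $a$, the uniqueness step requires verifying that $\gtp(a/M)$ also does not $\mu$-split over $M_{i_0}$, and once this is secured we get $a$ realizing $\tilde p$ and thus $p = \tilde p \rest N$.

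Establishing this non-splitting property of $\gtp(a/M)$ is the main obstacle. I plan to reapply the $\bclc$ property to $\gtp(a/M)$, obtaining some $j_0 < \gamma$ over whose $M_{j_0}$ it does not $\mu$-split; then use base monotonicity of non-$\mu$-splitting (an immediate unpacking of Definition \ref{splitting-def}, since an isomorphism fixing a larger model automatically fixes a smaller one) to pass both types to the common base $M_{\max(i_0, j_0)}$. After re-choosing $a$ inside $M_{\max(i_0, j_0)+2}$ to realize $\tilde p \rest M_{\max(i_0, j_0)+1}$ and iterating if required, the process stabilizes by the regularity of $\gamma$, yielding the desired realization of $p$ in $M$.
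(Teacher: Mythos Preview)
There is a genuine gap. Your target equality $\gtp(a/M) = \tilde p$ with $a \in M$ would say that the (arbitrarily chosen) extension $\tilde p \in \gS(M)$ is realized \emph{inside} $M$; but if $p$ is not already realized in $M$ --- precisely what you are trying to prove --- then no extension of $p$ to $M$ can be realized in $M$. So the conclusion you are aiming for is false in the only case that matters, and no correct argument can reach it. The specific mechanism you propose to get there --- pick $a$, find a non-splitting base $M_{j_0}$ for $\gtp(a/M)$, enlarge the base, re-choose $a$, iterate --- does not stabilize: each re-chosen $a$ is a brand-new element with a brand-new type over $M$, whose non-splitting base is unrelated to the previous one. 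Regularity of $\gamma$ gives nothing here, since you are not accumulating bases for a single fixed type.

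A symptom of the problem is that your argument never invokes the hypothesis of stability in unboundedly many cardinals below $\mu$, nor Fact~\ref{bv-sat}; both are essential. The paper's proof proceeds differently: pick a cofinal sequence $\seq{\mu_i : i < \cf{\mu}}$ of stability cardinals below $\mu$, and for each $i$ and each $\delta \in \bclc(\K_{\mu_0}, \ltu) \cap \mu^+$ build a $(\mu,\delta)$-limit tower whose successor stages are $\mu_i$-saturated. Fact~\ref{bv-sat} (which uses exactly the locality condition $\delta \in \bclc(\K_{\mu_0},\ltu)$ together with unbounded stability below $\mu$) then gives that the union is $\mu_i$-saturated. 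By uniqueness of $(\mu,\delta)$-limits of the same cofinality, \emph{every} $(\mu,\delta)$-limit is $\mu_i$-saturated for each $i$, hence $\mu$-saturated.
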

\begin{proof}
  Assume for simplicity that $\mu$ is limit (if $\mu$ is a successor cardinal, the proof is completely similar). Let $\gamma := \cf{\mu}$ and let $\seq{\mu_i : i < \gamma}$ be increasing cofinal in $\mu$ such that $\K$ is stable in $\mu_i$ for all $i < \gamma$. By Fact \ref{stab-spec-facts}(\ref{stab-spec-3}), $\K$ is stable in $\mu_i^+$ for all $i < \gamma$. Let $\delta \in \bclc (\K_{\mu_0}, \ltu) \cap \mu^+$. By Fact \ref{bv-sat}, for all $i < \gamma$, the union of a chain of $\mu_i$-saturated models of length $\delta$ is $\mu_i$-saturated. It follows that the $(\mu, \delta)$-limit model is saturated. Indeed, for each fixed $i < \gamma$, we can build an increasing continuous chain $\seq{M_j : j \le \delta}$ such that for all $j < \delta$, $M_j \in \K_\mu$, $M_{j + 1}$ is universal over $M_j$, and $M_{j + 1}$ is $\mu_i$-saturated. By what has just been observed, $M_{\delta}$ is $\mu_i$-saturated, and is a $(\mu, \delta)$-limit model. Now apply uniqueness of limit models of the same length.

  To see the ``in particular'' part, assume again that $\mu$ is limit (if $\mu$ is a successor, the $(\mu, \mu)$-limit model is saturated). Then without loss of generality, $\mu_0 > \lambda (\K)$, so by Fact \ref{splitting-lc}, $\lambda (\K)^+ \in \bclc (\K_{\mu_0}, \ltu)$. Thus the $(\mu, \lambda (\K)^+)$-limit model is saturated.
\end{proof}

\begin{proof}[Proof of Theorem \ref{key-thm}]
  Let $\mu > \chi_0$ be such that $\K$ is stable in $\mu$ and in unboundedly many cardinals below $\mu$. Let $\delta := \cf{\mu}$.

  By Theorem \ref{sat-existence}, there is a saturated model $M$ of cardinality $\mu$. Using that $\K$ is stable in unboundedly many cardinals below $\mu$, one can build an increasing continuous resolution $\seq{M_i : i \le \delta}$ such that $M_\delta = M$ and for $i < \delta$, $M_i \in \K_{<\mu}$, $M_{i + 1}$ is universal over $M_i$. By a back and forth argument, this shows that $M$ is $(\mu, \delta)$-limit. By Lemma \ref{key-lc-lem}, $\delta \in \bclc (\K_\mu, \ltu)$, as desired.
\end{proof}

\begin{cor}\label{key-cor}
  Let $\K$ be an $\LS (\K)$-tame AEC with a monster model. Let $\chi_0$ be as given by Fact \ref{bv-sat}. For any $\mu \ge \lambda' (\K) + \chi_0^+$ such that $\K$ is stable in unboundedly many cardinals below $\mu$, the following are equivalent:
  
  \begin{enumerate}
  \item\label{key-thm-1} $\K$ is stable in $\mu$.
  \item\label{key-thm-2} $\cf{\mu} \in \uchi (\K)$.
  \end{enumerate}
\end{cor}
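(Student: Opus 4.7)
The plan is to deduce both implications from earlier results, with Theorem \ref{key-thm} handling the forward direction and Fact \ref{stab-spec-facts}(\ref{stab-spec-4}) handling the converse. The role of $\lambda'(\K)$ is to let us identify $\uchi(\K)$ with $\bclc(\K_\mu, \ltu)$ for any stability cardinal $\mu \ge \lambda'(\K)$, via the monotonicity fact preceding Definition \ref{chi-def}.

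For (\ref{key-thm-1}) $\Rightarrow$ (\ref{key-thm-2}), I would simply invoke Theorem \ref{key-thm}: since $\mu \ge \chi_0^+ > \chi_0$, $\K$ is stable in $\mu$, and $\K$ is stable in unboundedly many cardinals below $\mu$, we conclude $\cf{\mu} \in \bclc(\K_\mu, \ltu)$. By the definition of $\uchi(\K)$ as the union of the $\bclc(\K_{\mu'}, \ltu)$ over stability cardinals $\mu'$, this gives $\cf{\mu} \in \uchi(\K)$.

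For (\ref{key-thm-2}) $\Rightarrow$ (\ref{key-thm-1}), first use $\mu \ge \lambda'(\K)$ together with the definition of $\lambda'(\K)$ to rewrite $\uchi(\K) = \bclc(\K_{\lambda'(\K)}, \ltu)$, and note that for any stability cardinal $\mu_0 \ge \lambda'(\K)$ the inclusion $\bclc(\K_{\lambda'(\K)}, \ltu) \subseteq \bclc(\K_{\mu_0}, \ltu)$ holds. Using unbounded stability below $\mu$, pick a strictly increasing sequence $\seq{\mu_i : i < \cf{\mu}}$ of stability cardinals, cofinal in $\mu$, with $\mu_0 \ge \lambda'(\K)$. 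Since $\cf{\mu}$ is regular, $\cf{\cf{\mu}} = \cf{\mu} \in \uchi(\K) \subseteq \bclc(\K_{\mu_0}, \ltu)$, so Fact \ref{stab-spec-facts}(\ref{stab-spec-4}) yields that $\mu = \sup_{i < \cf{\mu}} \mu_i \in \Stab(\K)$.

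The argument is essentially bookkeeping: all the hard work is in Theorem \ref{key-thm} (which itself rests on the machinery of averages via Fact \ref{bv-sat} and on Lemma \ref{key-lc-lem}) and in Fact \ref{stab-spec-facts}(\ref{stab-spec-4}). The only potential snag is verifying the cofinality hypothesis when $\mu$ is itself a successor (where $\cf{\mu} = \mu$ and the "unboundedly many stability cardinals below $\mu$" hypothesis is used only to produce the cofinal sequence of stability cardinals), but this is automatic from the assumptions.
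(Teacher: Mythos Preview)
Your approach is the same as the paper's, and the forward direction is fine. However, there is a small but real gap in your treatment of the converse direction when $\mu$ is a successor cardinal.

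If $\mu = \nu^+$, then $\cf{\mu} = \mu$, and you claim to pick a strictly increasing sequence $\seq{\mu_i : i < \cf{\mu}}$ of stability cardinals cofinal in $\mu$. But there are at most $\nu < \mu$ cardinals below $\mu$, so no strictly increasing sequence of cardinals of length $\mu$ below $\mu$ exists; Fact~\ref{stab-spec-facts}(\ref{stab-spec-4}) simply cannot be applied to produce $\mu$ as a supremum. Your final paragraph flags the successor case as a ``potential snag'' but then dismisses it as ``automatic from the assumptions'' --- it is not automatic from your argument as written.

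The fix is easy and is what the paper does: in the successor case, ``unboundedly many stability cardinals below $\mu = \nu^+$'' forces $\nu \in \Stab(\K)$, and then Fact~\ref{stab-spec-facts}(\ref{stab-spec-3}) gives $\mu = \nu^+ \in \Stab(\K)$ directly. This is why the paper cites both (\ref{stab-spec-3}) and (\ref{stab-spec-4}) for the (\ref{key-thm-2}) $\Rightarrow$ (\ref{key-thm-1}) direction. With that addition, your proof matches the paper's.
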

\begin{proof}
  (\ref{key-thm-1}) implies (\ref{key-thm-2}) is by Theorem \ref{key-thm} and (\ref{key-thm-2}) implies (\ref{key-thm-1}) is by Facts \ref{stab-spec-facts}(\ref{stab-spec-3}),(\ref{stab-spec-4}).
\end{proof}

It is natural to ask whether Corollary \ref{key-cor} holds for \emph{arbitrary} high-enough $\mu$'s (i.e.\ without assuming stability in unboundedly many cardinals below $\mu$). At present, the answer we can give is sensitive to cardinal arithmetic: Fact \ref{stab-spec-facts} does not give us enough tools to answer in ZFC. There is however a large class of cardinals on which there is no cardinal arithmetic problems. This is already implicit in \cite[\S5]{ss-tame-jsl}.

\begin{defin}\label{closed-def}
  A cardinal $\mu$ is \emph{$\theta$-closed} if $\lambda^\theta < \mu$ for all $\lambda < \mu$. We say that $\mu$ is \emph{almost $\theta$-closed} if $\lambda^\theta \le \mu$ for all $\lambda < \mu$.
\end{defin}

\begin{lem}\label{closed-lem}
  Let $\K$ be an $\LS (\K)$-tame AEC with a monster model. If $\mu$ is almost $\lambda (\K)$-closed, then either $\mu = \mu^{\lambda (\K)}$ and $\K$ is stable in $\mu$, or $\K$ is stable in unboundedly many cardinals below $\mu$.
\end{lem}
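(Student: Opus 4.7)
\textbf{Proof plan for Lemma \ref{closed-lem}.} Write $\lambda := \lambda(\K)$, so $\K$ is stable in $\lambda$ (Facts~\ref{stab-spec-facts} will do the heavy lifting; the relevant instance is part~(\ref{stab-spec-2}): if $\K$ is stable in some $\nu$ and $\sigma = \sigma^\nu$, then $\K$ is stable in $\sigma$). My first move is a preliminary observation that almost $\lambda$-closure of $\mu$ forces $\mu > \lambda$: if $\mu \le \lambda$ (and $\mu$ infinite), then taking any $\chi < \mu$ with $\chi \ge 2$ would give $\chi^\lambda \ge 2^\lambda > \lambda \ge \mu$, contradicting almost $\lambda$-closure. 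This frees me to quantify over cardinals in $[\lambda,\mu)$ without worrying about degenerate cases.

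The core of the argument is a case split that records where the ``almost'' is tight. Either (Case~1) there exists $\chi < \mu$ with $\chi^\lambda = \mu$, or (Case~2) $\chi^\lambda < \mu$ for all $\chi < \mu$ (that is, $\mu$ is genuinely $\lambda$-closed). In Case~1, I compute $\mu^\lambda = (\chi^\lambda)^\lambda = \chi^\lambda = \mu$, so $\mu = \mu^\lambda$; then Fact~\ref{stab-spec-facts}(\ref{stab-spec-2}) applied to the stability cardinal $\lambda$ yields that $\K$ is stable in $\mu$, giving the first alternative of the conclusion.

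In Case~2, I produce a cofinal family of stability cardinals below $\mu$. Given any $\chi_0 < \mu$, set $\chi := (\chi_0 + \lambda)^\lambda$. Since $\chi_0 + \lambda < \mu$ (using $\mu > \lambda$), the Case~2 hypothesis gives $\chi < \mu$. On the other hand $\chi^\lambda = \chi$, so Fact~\ref{stab-spec-facts}(\ref{stab-spec-2}) gives stability in $\chi$; and $\chi \ge \chi_0$. As $\chi_0$ was arbitrary, $\K$ is stable in unboundedly many cardinals below $\mu$, yielding the second alternative.

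The proof is almost purely cardinal-arithmetic bookkeeping; there is no genuine obstacle beyond making sure one does not accidentally land in the regime $\mu \le \lambda$ where the dichotomy becomes vacuous, which is precisely what the preliminary observation rules out. The structure of the lemma is therefore that the ``almost'' in almost $\lambda$-closure is either witnessed by equality (forcing $\mu^\lambda = \mu$, hence stability) or never witnessed (giving genuine $\lambda$-closure, hence a dense set of fixed points of $\sigma \mapsto \sigma^\lambda$, each of which is a stability cardinal).
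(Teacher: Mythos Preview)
Your proof is correct and follows essentially the same approach as the paper's. The only cosmetic difference is the phrasing of the case split: the paper splits directly on whether $\mu^{\lambda(\K)} = \mu$, noting that if not then almost $\lambda(\K)$-closure forces genuine $\lambda(\K)$-closure, whereas you split on whether some $\chi < \mu$ attains $\chi^{\lambda} = \mu$ and derive $\mu^\lambda = \mu$ from that---these are the same dichotomy viewed from opposite ends, and your version makes the implicit step in the paper's ``otherwise'' explicit.
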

\begin{proof}
  If $\mu^{\lambda (\K)} = \mu$, then $\K$ is stable in $\mu$ by Fact \ref{stab-spec-facts}(\ref{stab-spec-2}). Otherwise, $\mu$ is $\lambda (\K)$-closed. Thus for any $\mu_0 < \mu$, $\mu_1 := \mu_0^{\lambda (\K)}$ is such that $\mu_1 < \mu$ and $\mu_1^{\lambda (\K)} = \mu_1$, hence $\K$ is stable in $\mu_1$. Therefore $\K$ is stable in unboundedly many cardinals below $\mu$.
\end{proof}

We have arrived to the following application of Corollary \ref{key-cor}:

\begin{cor}[Eventual stability spectrum for closed cardinals]\label{key-cor-2}
  Let $\K$ be an $\LS (\K)$-tame AEC with a monster model. Let $\chi_0 < H_1$ be as given by Fact \ref{bv-sat}. Let $\mu$ be almost $\lambda (\K)$-closed with $\mu \ge \lambda' (\K) + \chi_0^+$. Then $\K$ is stable in $\mu$ if and only if $\cf{\mu} \in \uchi (\K)$.
\end{cor}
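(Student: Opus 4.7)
The plan is to use Lemma \ref{closed-lem} to split into two cases according to whether $\mu = \mu^{\lambda(\K)}$ or $\K$ is stable in unboundedly many cardinals below $\mu$, and in each case reduce to a result already proven.

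\emph{Direction ``$\K$ stable in $\mu$ $\Rightarrow$ $\cf{\mu} \in \uchi(\K)$''.} By Lemma \ref{closed-lem}, either $\mu = \mu^{\lambda(\K)}$, or $\K$ is stable in unboundedly many cardinals below $\mu$. In the second case, since $\mu \ge \lambda'(\K) + \chi_0^+$, Corollary \ref{key-cor} applies directly and yields $\cf{\mu} \in \uchi(\K)$. In the first case, König's theorem gives $\cf{\mu} > \lambda(\K)$, so $\cf{\mu}$ is a regular cardinal in $[\lambda(\K)^+, \infty)$. By the remark following Fact \ref{splitting-lc}, $[\lambda(\K), \infty) \cap \REG \subseteq \bclc(\K_{\lambda(\K)}, \ltu) \subseteq \uchi(\K)$, so $\cf{\mu} \in \uchi(\K)$.

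\emph{Direction ``$\cf{\mu} \in \uchi(\K)$ $\Rightarrow$ $\K$ stable in $\mu$''.} Again apply Lemma \ref{closed-lem}. If $\mu = \mu^{\lambda(\K)}$, then $\K$ is stable in $\mu$ immediately by the lemma (equivalently by Fact \ref{stab-spec-facts}(\ref{stab-spec-2})), and the hypothesis $\cf{\mu} \in \uchi(\K)$ is not even needed. Otherwise, $\K$ is stable in unboundedly many cardinals below $\mu$, so Corollary \ref{key-cor} applies and gives that $\cf{\mu} \in \uchi(\K)$ implies stability in $\mu$.

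There is no real obstacle here; the statement is essentially a packaging of Corollary \ref{key-cor} together with Lemma \ref{closed-lem}, the only subtlety being the trivial case $\mu = \mu^{\lambda(\K)}$ where one must separately verify the cofinality condition via König's theorem rather than invoking Corollary \ref{key-cor} (whose hypothesis on unbounded stability below $\mu$ might fail here). All other cases reduce directly to previously established results.
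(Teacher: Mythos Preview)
Your proof is correct and follows essentially the same approach as the paper: both use Lemma \ref{closed-lem} to reduce to either Corollary \ref{key-cor} (when $\K$ is stable in unboundedly many cardinals below $\mu$) or to the trivial case $\mu = \mu^{\lambda(\K)}$, where König's theorem plus Fact \ref{splitting-lc} handle the cofinality condition. The only difference is organizational---the paper splits on cases first rather than on directions---but the logical content is identical.
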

\begin{proof}
  If $\K$ is stable in unboundedly many cardinals below $\mu$, this is Corollary \ref{key-cor}. Otherwise by Lemma \ref{closed-lem}, $\K$ is stable in $\mu$ and $\mu^{\lambda (\K)} = \mu$. In particular, $\cf{\mu} > \lambda (\K)$, so by Fact \ref{splitting-lc}, $\cf{\mu} \in \uchi (\K)$.
\end{proof}
\begin{cor}\label{fo-cor}
  Let $\K$ be the class of models of a first-order stable theory $T$ ordered by $\preceq$. Then $\uchi (\K)$ is an end-segment and $\chi (\K) = \kappa_r (T)$ (the least regular cardinal greater than or equal to $\kappa (T)$).
\end{cor}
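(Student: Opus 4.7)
The argument splits naturally into first showing that $\uchi(\K)$ is an end segment and then identifying its minimum with $\kappa_r(T)$.

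\emph{End segment.} Since $\K$ is an elementary class, Galois types coincide with first-order syntactic types, so $\K$ is $(<\aleph_0)$-tame and hence weakly $\chi$-local for every regular $\chi$ (as observed right after Definition \ref{weakly-local-def}). Combined with the weak uniqueness of non-$\mu$-splitting (Fact \ref{splitting-weak-uq}), Theorem \ref{loc-implies-cont} applied with $\chi = \aleph_0$ yields $\aleph_0 \in \bclcc(\K_\mu, \ltu)$ for every stability cardinal $\mu$; equivalently, $\bclcc(\K_\mu, \ltu)$ contains every regular cardinal. Corollary \ref{locality-cor} then gives that $\bclc(\K_\mu, \ltu)$ is an end segment of regular cardinals, and $\uchi(\K)$, being the union of these over $\mu \in \Stab(\K)$, is an end segment as well. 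In particular $\chi(\K) = \min \uchi(\K)$.

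\emph{Dictionary with first-order non-forking.} For the identification with $\kappa_r(T)$, I would next establish: for any large enough stability cardinal $\mu$, any $M_0, M \in \K_\mu$ with $M_0 \preceq M$ and $M$ universal over $M_0$, and any $p \in \gS(M)$, the type $p$ does not $\mu$-split over $M_0$ iff $p$ does not fork over $M_0$ in the classical first-order sense. The AEC-to-first-order direction is immediate: an $\mu$-splitting witness $(N_1, N_2, f)$ directly provides parameters $\bar a \equiv_{M_0} f(\bar a)$ separated by some formula of $p$, and in stable $T$ non-splitting over a model equals non-forking over it. Conversely, given a first-order splitting witness $\bar a \equiv_{M_0} \bar b$ inside $M$, one builds elementary submodels $N_1, N_2 \preceq M$ of cardinality $\le \mu$ containing $M_0\bar a$ and $M_0 \bar b$ respectively with an isomorphism $f: N_1 \cong_{M_0} N_2$ mapping $\bar a$ to $\bar b$, by a standard back-and-forth that is feasible precisely because $M$ is universal over $M_0$.

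\emph{Conclusion.} Under this dictionary, membership of a regular $\chi$ in $\bclc(\K_\mu, \ltu)$ for $\mu$ a large stability cardinal translates exactly into the classical property: along every universal chain $\seq{M_i : i \le \chi}$ of elementary submodels of size $\mu$, every type over $M_\chi$ has a non-forking restriction to some $M_i$. By the standard first-order theory of $\kappa(T)$, this property holds for regular $\chi$ precisely when $\chi \ge \kappa_r(T)$: the forward direction is the definition of $\kappa(T)$ restricted to chains of models, and the reverse is obtained by realizing any classical forking chain of length $\chi < \kappa_r(T)$ as a universal chain of models of size $\mu$ (possible because $T$ is stable in $\mu$, so universal extensions exist). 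Combined with the first paragraph, this gives $\chi(\K) = \kappa_r(T)$. The main technical obstacle will be the careful back-and-forth underlying the dictionary and the upgrade of a classical forking chain of sets into a universal chain of models of matching length.
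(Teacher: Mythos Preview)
Your end-segment argument via Theorem~\ref{loc-implies-cont} and Corollary~\ref{locality-cor} is correct. The paper does not argue this separately; it falls out of the full characterization $\chi \in \uchi(\K) \Longleftrightarrow \chi \ge \kappa_r(T)$.

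For the identification $\chi(\K) = \kappa_r(T)$, however, your plan has a real gap in the direction $\chi \in \uchi(\K) \Rightarrow \chi \ge \kappa_r(T)$, and the paper takes a completely different route. Only one half of your dictionary is justified: Galois $\mu$-splitting clearly implies first-order splitting (hence forking), and this suffices for $\chi \ge \kappa_r(T) \Rightarrow \chi \in \uchi(\K)$. The converse you need, that first-order splitting over $M_0$ yields Galois $\mu$-splitting over $M_0$ when $M$ is merely universal over $M_0$, is not delivered by the back-and-forth you sketch. Universality of $M$ over $M_0$ lets you embed an abstract copy of $N_1$ into $M$ over $M_0$, but it gives you no control over where the specific witness $\bar b$ lands; without that you cannot conclude $f(p\rest N_1) \neq p\rest N_2$. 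A genuine back-and-forth would require $M$ to be $\mu^+$-homogeneous, which universality over a single submodel does not provide. The second obstacle you flag is equally real and not merely technical: enlarging each set $A_i$ in a classical forking chain to a model $M_i$ (let alone one universal over its predecessor) can destroy the forking, since $p$ may fork over $A_i$ yet not over the larger $M_i$.

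The paper sidesteps both issues by working through the stability \emph{spectrum} rather than a pointwise translation. Given a regular $\chi$, put $\mu := \beth_\chi(\lambda'(\K))$; this is strong limit of cofinality $\chi$, hence almost $\lambda(\K)$-closed and above $\lambda'(\K) + \chi_0^+$, so Corollary~\ref{key-cor-2} says $\K$ is stable in $\mu$ iff $\cf\mu = \chi \in \uchi(\K)$. On the first-order side, $\K$ is stable in $\mu$ iff $\mu = \mu^{<\kappa(T)}$, which for this particular $\mu$ is equivalent to $\chi \ge \kappa(T)$. Chaining the two equivalences yields $\chi \in \uchi(\K) \Longleftrightarrow \chi \ge \kappa_r(T)$ in one stroke, with no need to compare splitting and forking directly.
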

\begin{proof}
  Let $\chi$ be a regular cardinal and let $\mu := \beth_\chi (\lambda' (\K))$. If $\chi \ge \kappa (T)$, $\mu = \mu^{<\kappa (T)}$ so by the first-order theory $\K$ is stable in $\mu$. By Corollary \ref{key-cor-2}, $\chi \in \uchi (\K)$. Conversely, if $\chi \in \uchi (\K)$ then by Corollary \ref{key-cor-2}, $\K$ is stable in $\mu$, hence $\mu = \mu^{<\kappa (T)}$, so $\chi \ge \kappa (T)$.
\end{proof}

Note that the class of almost $\lambda (\K)$-closed cardinals forms a club, and on this class Corollary \ref{key-cor-2} gives a complete (eventual) characterization of stability. We do not know how to analyze the cardinals that are \emph{not} almost $\lambda (\K)$-closed in ZFC. Using hypotheses beyond ZFC, we can see that \emph{all} big-enough cardinals are almost $\lambda (\K)$-closed. In particular, Fact \ref{card-arith-fact} below shows that under SCH (and hence above a strongly compact) all high-enough cardinals are almost $\lambda (\K)$-closed. Thus (Corollary \ref{non-zfc-stab-spec}) in this setup we have a full characterization of the stability spectrum.

For ease of notation, we define the following function:

\begin{defin}\label{theta-def}
  For $\mu$ an infinite cardinal, $\theta (\mu)$ is the least cardinal $\theta$ such that any $\lambda \ge \theta$ is almost $\mu$-closed. When such a $\theta$ does not exist, we write $\theta (\mu) = \infty$.
\end{defin}

If $\lambda$ is a strong limit cardinal, then $2^{\lambda} = \lambda^{\cf{\lambda}}$ and so if $2^{\lambda} > \lambda^+$ we have that $\lambda^+$ is not almost $\cf{\lambda}$-closed. Foreman and Woodin \cite{gch-foreman-woodin} have shown that it is consistent with ZFC and a large cardinal axiom that $2^\lambda > \lambda^+$ for all infinite cardinals $\lambda$. Therefore it is possible that $\theta (\aleph_0) = \infty$ (and hence $\theta (\mu) = \infty$ for any infinite cardinal $\mu$). However, we have:

\begin{fact}\label{card-arith-fact}
  Let $\mu$ be an infinite cardinal.

  \begin{enumerate}
  \item If SCH holds, then $\theta (\mu) = 2^{\mu}$.
  \item If $\kappa > \mu$ is strongly compact, then $\theta (\mu) \le \kappa$.
  \end{enumerate}
\end{fact}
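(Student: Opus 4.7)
The plan is to bound $\theta (\mu)$ from both sides through direct cardinal arithmetic, using SCH (respectively Solovay's theorem) to handle exponentiation of large cardinals by $\mu$. I first establish the lower bound $\theta (\mu) \ge 2^\mu$ in part (1); this requires no hypothesis beyond ZFC. For any infinite $\lambda$ with $\lambda < 2^\mu$, pick any cardinal $\lambda'$ with $2 \le \lambda' < \lambda$ (which exists since $\lambda$ is infinite); then $(\lambda')^\mu \ge 2^\mu > \lambda$, so $\lambda$ is not almost $\mu$-closed. Hence no threshold below $2^\mu$ can witness $\theta (\mu)$.

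For the upper bound in (1), assume SCH and fix $\lambda \ge 2^\mu$ and $\lambda' < \lambda$; the goal is $(\lambda')^\mu \le \lambda$. I would split on the size of $\lambda'$. If $\lambda' \le 2^\mu$, then $(\lambda')^\mu \le (2^\mu)^\mu = 2^\mu \le \lambda$, which needs no SCH. If $\lambda' > 2^\mu$, then in particular $\lambda' > \mu$, and under SCH the standard formulae for cardinal exponentiation yield $(\lambda')^\mu = \lambda'$ when $\cf{\lambda'} > \mu$ and $(\lambda')^\mu = (\lambda')^+$ when $\cf{\lambda'} \le \mu$. Either way $(\lambda')^\mu \le (\lambda')^+ \le \lambda$, since $\lambda' < \lambda$. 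Combined with the lower bound, this gives $\theta (\mu) = 2^\mu$.

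Part (2) then follows from the computation in part (1) together with Solovay's theorem (cited in the paper's introduction), which states that SCH holds at and above any strongly compact $\kappa$. Given such a $\kappa > \mu$, I want to show every $\lambda \ge \kappa$ is almost $\mu$-closed. Fix $\lambda' < \lambda$. If $\lambda' < \kappa$, then since $\kappa$ is inaccessible and $\mu < \kappa$, one has $(\lambda')^\mu < \kappa \le \lambda$. If $\lambda' \ge \kappa$, then $\lambda' \ge \kappa > 2^\mu$, so Solovay's SCH applies and the computation of part (1) gives $(\lambda')^\mu \le (\lambda')^+ \le \lambda$. Hence $\theta (\mu) \le \kappa$.

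There is no serious obstacle here; the argument is routine cardinal arithmetic once SCH is in hand. The only delicate point is correctly invoking the standard formula for $\kappa^\mu$ at singular $\kappa$ of small cofinality under SCH (essentially Hausdorff's formula combined with Silver/Jech's analysis of SCH), and being careful with the boundary case $\lambda' = 2^\mu$ where the computation collapses to $2^\mu$ rather than $(\lambda')^+$.
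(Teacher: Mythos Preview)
Your argument is correct and is precisely the elementary cardinal-arithmetic computation that the paper's cited references (Jech 5.22 for (1), Solovay's theorem / Jech 20.8 for (2)) would supply; the paper itself gives no details beyond those citations, so your proof is essentially a spelled-out version of the same approach.
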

\begin{proof}
  The first fact follows from basic cardinal arithmetic (see \cite[5.22]{jechbook}), and the third follows from a result of Solovay (see \cite{solovay-sch} or \cite[20.8]{jechbook}).
\end{proof}

The following easy lemma will be used in the proof of Theorem \ref{sat-spec}:

\begin{lem}\label{limit-above-theta}
  Let $\K$ be an $\LS (\K)$-tame AEC with a monster model. If $\mu > \theta (\lambda (\K))$ and $\mu$ is limit, then $\K$ is stable in unboundedly many cardinals below $\mu$.
\end{lem}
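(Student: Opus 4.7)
The plan is to combine Definition \ref{theta-def} with Lemma \ref{closed-lem} in a straightforward way. The hypothesis $\mu > \theta (\lambda (\K))$ tells us that every cardinal $\lambda$ with $\theta (\lambda (\K)) \le \lambda < \mu$ is almost $\lambda (\K)$-closed, and Lemma \ref{closed-lem} dichotomizes each such $\lambda$: either $\K$ is stable at $\lambda$ itself, or $\K$ is stable in unboundedly many cardinals below $\lambda$. Either horn of the dichotomy will suffice to produce a stability cardinal in any prescribed interval $[\mu_0, \mu)$.

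Concretely, I would argue as follows. Fix any $\mu_0 < \mu$; the goal is to produce a stability cardinal $\lambda^{\ast}$ with $\mu_0 \le \lambda^{\ast} < \mu$. Without loss of generality we may assume $\mu_0 \ge \theta (\lambda (\K))$. Since $\mu$ is a limit cardinal and $\mu_0 < \mu$, we can pick a cardinal $\lambda$ strictly between them, say $\mu_0 < \lambda < \mu$. Because $\lambda > \mu_0 \ge \theta (\lambda (\K))$, Definition \ref{theta-def} gives that $\lambda$ is almost $\lambda (\K)$-closed, so Lemma \ref{closed-lem} applies to $\lambda$.

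If the first alternative of Lemma \ref{closed-lem} holds, then $\K$ is stable in $\lambda$, and we take $\lambda^{\ast} := \lambda$, which lies in $(\mu_0, \mu)$ as required. Otherwise, $\K$ is stable in unboundedly many cardinals below $\lambda$, so in particular there exists a stability cardinal $\lambda^{\ast}$ with $\mu_0 < \lambda^{\ast} < \lambda < \mu$. In either case we have produced a stability cardinal in the interval $[\mu_0, \mu)$. Since $\mu_0 < \mu$ was arbitrary, $\K$ is stable in unboundedly many cardinals below $\mu$.

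There is no real obstacle here; the content of the lemma is just a bookkeeping consequence of the defining property of $\theta (\lambda (\K))$ and the dichotomy in Lemma \ref{closed-lem}. The only subtlety worth flagging is that one must use that $\mu$ is a limit cardinal (so that we can insert a cardinal $\lambda$ strictly between $\mu_0$ and $\mu$); if $\mu$ were a successor the argument would give stability cardinals below $\mu^{-}$, which is still enough to witness unboundedness below $\mu$, but the limit hypothesis makes the inequalities cleanest.
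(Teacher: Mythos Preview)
Your proof is correct and follows essentially the same idea as the paper's. The only difference is packaging: the paper bypasses Lemma \ref{closed-lem} and argues directly that for $\mu_0 \in [\theta(\lambda(\K)), \mu)$ the cardinal $\mu_0^+ < \mu$ is almost $\lambda(\K)$-closed, so $\mu_0^{\lambda(\K)} \le \mu_0^+$ and Fact \ref{stab-spec-facts}(\ref{stab-spec-2}) yields stability in $\mu_0$ or $\mu_0^+$; your route through the dichotomy of Lemma \ref{closed-lem} unwinds to the same computation. (Your aside about the successor case is slightly off: unboundedness below $\mu = \nu^+$ would require $\nu$ itself to be a stability cardinal, not merely cardinals below $\nu$; but this does not affect the proof.)
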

\begin{proof}
  Let $\mu_0 \in [\theta (\lambda (\K)), \mu)$. As $\mu$ is limit, $\mu_0^+ < \mu$ and $\mu_0^+$ is almost $\lambda (\K)$-closed. In particular, $\mu_0^{\lambda (\K)} \le \mu_0^+$. By Fact \ref{stab-spec-facts}(\ref{stab-spec-2}), $\K$ is stable in either $\mu_0$ or $\mu_0^+$, as needed.
\end{proof}

\begin{cor}\label{non-zfc-stab-spec}
  Let $\K$ be an $\LS (\K)$-tame AEC with a monster model and let $\chi_0 < H_1$ be as given by Fact \ref{bv-sat}. For any $\mu \ge \lambda' (\K) + \chi_0^+ + \theta (\lambda (\K))$, $\K$ is stable in $\mu$ if and only if $\cf{\mu} \in \uchi (\K)$.
\end{cor}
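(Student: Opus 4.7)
The plan is to apply Corollary \ref{key-cor-2} essentially verbatim. The new hypothesis compared to that corollary is the lower bound $\mu \ge \theta(\lambda (\K))$, and the whole point of Definition \ref{theta-def} is that this bound is exactly what forces $\mu$ itself to be almost $\lambda (\K)$-closed: by the very definition of $\theta$, any cardinal at least $\theta(\lambda (\K))$ is almost $\lambda (\K)$-closed. So once one has set up the notation, the content of the corollary is simply a repackaging.

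Concretely I would proceed in two lines. First, observe that the bound $\mu \ge \theta(\lambda (\K))$ together with Definition \ref{theta-def} gives that $\mu$ is almost $\lambda (\K)$-closed. Second, note that the remaining bound $\mu \ge \lambda' (\K) + \chi_0^+$ is exactly the hypothesis of Corollary \ref{key-cor-2}, so that corollary applies and delivers the stated equivalence $\K$ stable in $\mu \iff \cf{\mu} \in \uchi (\K)$.

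There is essentially no obstacle: all the real work has been done in Theorem \ref{key-thm} (via the average-based Fact \ref{bv-sat} and the splitting lemma \ref{key-lc-lem}) and in the closed-cardinal reformulation of Corollary \ref{key-cor-2}. The role of $\theta(\lambda (\K))$ here is simply to record the threshold above which the almost-closedness side condition of Corollary \ref{key-cor-2} becomes automatic, so that a ``for all $\mu$'' statement can be extracted from the ``for all almost $\lambda (\K)$-closed $\mu$'' statement. The only aspect that merits a remark is that $\theta(\lambda (\K))$ may be $\infty$ in pathological models of ZFC (compare the Foreman–Woodin discussion preceding Fact \ref{card-arith-fact}), in which case the corollary has vacuous content, which is why it is paired in the paper with the SCH-based or large-cardinal-based upper bounds on $\theta$ recorded in Fact \ref{card-arith-fact}.
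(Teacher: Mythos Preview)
Your proposal is correct and follows exactly the paper's approach: the paper's proof is the one-liner ``By Corollary \ref{key-cor-2} and the definition of $\theta (\lambda (\K))$,'' which is precisely the two-step argument you spell out. Your additional remark about the possible vacuity when $\theta(\lambda(\K)) = \infty$ is also consistent with the paper's surrounding discussion.
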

\begin{proof}
  By Corollary \ref{key-cor-2} and the definition of $\theta (\lambda (\K))$.
\end{proof}

A particular case of Theorem \ref{key-thm} derives superstability from stability in a tail of cardinals. The following concept is studied already in \cite[6.3]{sh394}.

\begin{defin}\label{ss-def}
  An AEC $\K$ is \emph{$\mu$-superstable} if:

  \begin{enumerate}
  \item $\mu \ge \LS (\K)$.
  \item $\K_\mu$ is non-empty, has amalgamation, joint embedding, and no maximal models.
  \item $\K$ is stable in $\mu$.
  \item $\clc (\K_\mu, \ltu) = \aleph_0$ (i.e.\ $\bclc (\K_\mu, \ltu)$ consists of all the regular cardinals).
  \end{enumerate}
\end{defin}

This definition has been well-studied and has numerous consequences in tame AECs, such as the existence of a well-behaved independence notion (a good frame), the union of a chain of $\lambda$-saturated being $\lambda$-saturated, or the uniqueness of limit models (see for example \cite{gv-superstability-jsl} for a survey and history). Even though in tame AECs Definition \ref{ss-def} is (eventually) equivalent to all these consequences \cite{gv-superstability-jsl}, it was not known whether it followed from stability on a tail of cardinals (see \cite[1.7]{gv-superstability-jsl}). We show here that it does (note that this is a ZFC result).

\begin{cor}\label{ss-cor}
  Let $\K$ be an $\LS (\K)$-tame AEC with a monster model. The following are equivalent.

  \begin{enumerate}
  \item\label{ss-cor-1} $\chi (\K) = \aleph_0$ (i.e.\ $\uchi (\K)$ consists of all regular cardinals).
  \item\label{ss-cor-3} $\K$ is $\lambda' (\K)$-superstable.
  \item\label{ss-cor-2} $\K$ is stable on a tail of cardinals.
  \end{enumerate}
\end{cor}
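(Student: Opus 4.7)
My plan is to establish the three equivalences by first handling the essentially definitional (\ref{ss-cor-1})$\Leftrightarrow$(\ref{ss-cor-3}) (in the numbering given: (1)$\Leftrightarrow$(2)), then doing the genuinely model-theoretic step (\ref{ss-cor-2})$\Rightarrow$(\ref{ss-cor-1}) via Theorem \ref{key-thm}, and closing the cycle by iterating Fact \ref{stab-spec-facts} to upgrade $\lambda'(\K)$-superstability to stability on a tail.

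For (\ref{ss-cor-1})$\Leftrightarrow$(\ref{ss-cor-3}), I first unwind the definitions: by the remark preceding Definition \ref{chi-def} the sequence $\seq{\bclc(\K_\mu,\ltu) : \mu \in \Stab(\K)}$ is $\subseteq$-increasing, and $\lambda'(\K)$ is chosen so that $\bclc(\K_{\lambda'(\K)},\ltu)$ contains every term; hence $\uchi(\K) = \bclc(\K_{\lambda'(\K)},\ltu)$. Thus $\chi(\K) = \aleph_0$ is literally the locality clause in Definition \ref{ss-def}, while the remaining clauses of $\lambda'(\K)$-superstability (amalgamation, joint embedding, no maximal models, and stability in $\lambda'(\K)$) hold automatically since $\K$ has a monster model and $\lambda'(\K) \in \Stab(\K)$.

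For (\ref{ss-cor-2})$\Rightarrow$(\ref{ss-cor-1}), suppose $\K$ is stable in every $\mu \geq \mu^\ast$ for some threshold $\mu^\ast$. Let $\chi_0 < H_1$ be as in Fact \ref{bv-sat} and fix an arbitrary regular cardinal $\chi$. I pick any limit cardinal $\mu$ of cofinality $\chi$ with $\mu > \mu^\ast + \chi_0$, e.g.\ $\mu = \aleph_{\alpha+\chi}$ for $\alpha$ large; then $\K$ is stable in $\mu$ and in unboundedly many cardinals below $\mu$ (the entire tail above $\mu^\ast$), so Theorem \ref{key-thm} gives $\chi = \cf{\mu} \in \bclc(\K_\mu, \ltu) \subseteq \uchi(\K)$. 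Since $\chi$ was arbitrary, $\uchi(\K) = \REG$.

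For (\ref{ss-cor-3})$\Rightarrow$(\ref{ss-cor-2}), assume $\bclc(\K_{\lambda'(\K)},\ltu) = \REG$. I prove stability in every $\mu \geq \lambda'(\K)$ by induction on $\mu$. The base case is immediate since $\lambda'(\K) \in \Stab(\K)$. At successors, I apply Fact \ref{stab-spec-facts}(\ref{stab-spec-3}). At a limit $\mu > \lambda'(\K)$, the induction hypothesis furnishes a strictly increasing sequence $\seq{\mu_i : i < \cf{\mu}}$ of stability cardinals cofinal in $\mu$ with $\mu_0 \geq \lambda'(\K)$; since $\bclc(\K_{\mu_0},\ltu) \supseteq \bclc(\K_{\lambda'(\K)},\ltu) = \REG$ contains $\cf{\mu}$, Fact \ref{stab-spec-facts}(\ref{stab-spec-4}) gives $\mu \in \Stab(\K)$. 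No step is a serious obstacle once Theorem \ref{key-thm} is available: the genuine content was already packaged into the existence of saturated models and the averages machinery behind Fact \ref{bv-sat}. The only mild care needed is producing, in the proof of (\ref{ss-cor-2})$\Rightarrow$(\ref{ss-cor-1}), cardinals of every prescribed cofinality above the threshold, which is routine cardinal arithmetic.
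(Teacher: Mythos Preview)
Your proof is correct and follows essentially the same route as the paper. The paper proves the cycle $(\ref{ss-cor-1})\Rightarrow(\ref{ss-cor-3})\Rightarrow(\ref{ss-cor-2})\Rightarrow(\ref{ss-cor-1})$: the first implication by definition of $\chi(\K)$, the second by citing Fact~\ref{ss-upward} (while explicitly noting that it is ``a straightforward induction using Fact~\ref{stab-spec-facts}'', which is exactly the induction you carry out), and the third via Corollary~\ref{key-cor}, which is just Theorem~\ref{key-thm} repackaged; your direct appeal to Theorem~\ref{key-thm} and your explicit induction are therefore only cosmetic differences.
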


The proof uses that $\mu$-superstability implies stability in every $\mu' \ge \mu$ (this is a straightforward induction using Fact \ref{stab-spec-facts}, see \cite[5.6]{ss-tame-jsl}). We state a slightly stronger version:

\begin{fact}[10.10 in \cite{indep-aec-apal}]\label{ss-upward}
  Let $\K$ be a $\mu$-tame AEC with amalgamation. If $\K$ is $\mu$-superstable, then $\K$ is $\mu'$-superstable for every $\mu' \ge \mu$.
\end{fact}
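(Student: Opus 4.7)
The plan is to verify each clause of Definition \ref{ss-def} at $\mu'$. The structural clauses—$\mu' \ge \LS(\K)$, and non-emptiness, amalgamation, joint embedding, and no maximal models in $\K_{\mu'}$—follow in a standard way from the hypothesis that $\K$ has amalgamation together with $\mu$-superstability: non-emptiness and no maximal models in $\K_\mu$ yield arbitrarily large models, and joint embedding transfers upward under amalgamation and arbitrarily large models. The real content is to establish stability in every $\mu' \ge \mu$ and to show $\clc(\K_{\mu'}, \ltu) = \aleph_0$.

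First I would prove by induction on $\mu' \ge \mu$ that $\mu' \in \Stab(\K)$. The base case is given. For a successor $\mu' = \nu^+$ with $\nu \ge \mu$, apply Fact \ref{stab-spec-facts}(\ref{stab-spec-3}) to the inductive hypothesis at $\nu$. For the limit case, set $\delta := \cf(\mu')$ and pick an increasing cofinal sequence $\seq{\mu_i : i < \delta}$ in $\mu'$ with $\mu_0 \ge \mu$; each $\mu_i$ lies in $\Stab(\K)$ by induction. The unlabelled transfer fact stated just before Definition \ref{chi-def} then gives $\bclc(\K_\mu, \ltu) \subseteq \bclc(\K_{\mu_0}, \ltu)$, and since the left-hand side equals $\REG$ by $\mu$-superstability, the regular cardinal $\delta$ lies in $\bclc(\K_{\mu_0}, \ltu)$. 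Fact \ref{stab-spec-facts}(\ref{stab-spec-4}) now yields stability at $\mu'$, closing the induction.

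Once stability at every $\mu' \ge \mu$ is established, a second application of the same transfer fact gives $\bclc(\K_\mu, \ltu) \subseteq \bclc(\K_{\mu'}, \ltu)$. Since the left-hand side contains every regular cardinal, so does the right-hand side, and hence $\clc(\K_{\mu'}, \ltu) = \aleph_0$, completing the verification of Definition \ref{ss-def} at $\mu'$.

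The main obstacle is the limit step of the induction: one needs the no-long-splitting-chain property for chains of models of size $\mu_0$, where $\mu_0$ may be strictly larger than $\mu$, whereas the hypothesis only gives it for chains based at $\mu$. Tameness is essential at exactly this point, through the inclusion $\bclc(\K_\mu, \ltu) \subseteq \bclc(\K_\lambda, \ltu)$; without it the chain property at $\mu$ would not propagate to chains of $\mu_0$-sized models and the induction would stall.
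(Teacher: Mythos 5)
The paper states this result as a \emph{Fact} cited to an external source and gives no proof of its own; the in-text remark immediately before the statement (near Corollary \ref{ss-cor}) sketches exactly the strategy you use, namely a straightforward induction via Fact \ref{stab-spec-facts} for stability, followed by the transfer of the locality class $\bclc$ for the superstability clause. Your argument is correct and matches that outline. One point worth making explicit rather than glossing over under ``standard'': the stated hypothesis supplies only amalgamation, while Fact \ref{stab-spec-facts} and the unlabelled monotonicity fact for $\bclc$ both assume $\LS(\K)$-tameness and a monster model. This is legitimate here, but should be argued: $\mu$-superstability gives joint embedding and no maximal models in $\K_\mu$, so $\K_{\ge\mu}$ has arbitrarily large models and (using amalgamation) joint embedding, hence a monster model, and $\LS(\K_{\ge\mu}) = \mu$ makes it $\LS(\K_{\ge\mu})$-tame. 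After passing to $\K_{\ge\mu}$ the cited facts apply as you use them.
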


\begin{proof}[Proof of Corollary \ref{ss-cor}]
  If (\ref{ss-cor-1}) holds, then (\ref{ss-cor-3}) holds by definition of $\chi (\K)$. By Fact \ref{ss-upward}, this implies stability in every $\mu \ge \lambda' (\K)$, so (\ref{ss-cor-2}). Now if (\ref{ss-cor-2}) holds then by Corollary \ref{key-cor} we must have that $\chi (\K) = \aleph_0$, so (\ref{ss-cor-1}) holds.
\end{proof}

Corollary \ref{ss-cor} and the author's earlier work with Grossberg \cite{gv-superstability-jsl} justify the following definition for tame AECs:

\begin{defin}\label{ss-chi-def}
  Let $\K$ be an $\LS (\K)$-tame AEC with a monster model. We say that $\K$ is \emph{superstable} if $\chi (\K) = \aleph_0$.
\end{defin}

\section{The saturation spectrum}

Theorem \ref{sat-existence} shows that there is a saturated model in many stability cardinals. It is natural to ask whether this generalizes to all stability cardinals, and whether the converse is true, as in the first-order case. We show here that this holds assuming SCH, but prove several ZFC results along the way. Some of the proofs are inspired from the ones in homogeneous model theory (due to Shelah \cite{sh54}, see also the exposition in \cite{grle-homog}).

The following is standard and will be used without comments.

\begin{fact}\label{sat-inacc}
  Let $\K$ be an AEC with a monster model. If $\LS (\K) < \mu \le \lambda = \lambda^{<\mu}$, then $\K$ has a $\mu$-saturated model of cardinality $\lambda$.
\end{fact}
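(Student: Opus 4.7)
The plan is to build the required model as the union of an increasing continuous $\lea$-chain of length $\lambda$, each stage realizing every Galois type over every small $\lea$-submodel of the previous stage.

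First I would record two consequences of $\lambda = \lambda^{<\mu}$. By K\"onig's theorem, $\cf{\lambda} \ge \mu$: if $\cf{\lambda} = \chi < \mu$, then $\lambda^\chi > \lambda$, contradicting $\lambda^{<\mu} = \lambda$. Moreover $2^\chi \le \lambda^\chi \le \lambda$ for every $\chi < \mu$. Representing a Galois type $p \in \gS (N)$ as an equivalence class of pairs $(N^+, a)$ with $N \lea N^+$, $\|N^+\| \le \|N\| + \LS (\K)$, and $a \in |N^+|$, and using that the number of $\tau(\K)$-structures on a set of size $\chi \ge \LS(\K)$ is at most $2^\chi$, one obtains $|\gS (N)| \le 2^{\|N\| + \LS (\K)} \le \lambda$ whenever $\|N\| < \mu$.

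Next I would construct an increasing continuous chain $\seq{M_i : i \le \lambda}$ in $\K_\lambda$, starting from any $M_0 \in \K_\lambda$ (available since $\K$ has arbitrarily large models, together with the downward L\"owenheim-Skolem axiom). At a successor step $i+1$, the number of $\lea$-submodels of $M_i$ of cardinality less than $\mu$ is at most $\|M_i\|^{<\mu} = \lambda^{<\mu} = \lambda$, and the previous paragraph gives at most $\lambda$ Galois types over each such submodel. Enumerate the (at most $\lambda$) pairs $(N_\alpha, p_\alpha)$ for $\alpha < \lambda$ and build an internal $\lea$-chain $\seq{M_i^\alpha : \alpha \le \lambda}$ in $\K_\lambda$ with $M_i^0 = M_i$: at step $\alpha$, pick a realization $a$ of $p_\alpha$ in $\sea$ and use downward L\"owenheim-Skolem to obtain $M_i^{\alpha + 1} \in \K_\lambda$ containing $M_i^\alpha \cup \{a\}$. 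Set $M_{i+1} := M_i^\lambda$, of cardinality $\lambda$.

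Finally, take $M := M_\lambda$, so $\|M\| = \lambda$. To check $\mu$-saturation, let $N \lea M$ with $\|N\| < \mu$ and $p \in \gS (N)$. Since $\cf{\lambda} \ge \mu > \|N\|$, the set of least indices $\{i(b) : b \in |N|,\ b \in |M_{i(b)}|\}$ is bounded in $\lambda$; let $i^*$ be a bound. Then $|N| \subseteq |M_{i^*}|$, and AEC coherence applied to $N \lea M$, $M_{i^*} \lea M$ gives $N \lea M_{i^*}$. By construction, $p$ is realized in $M_{i^* + 1} \lea M$, as desired.

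The main technical point is the cofinality step $\cf{\lambda} \ge \mu$ extracted from $\lambda^{<\mu} = \lambda$: without it, small $\lea$-submodels of the union need not sit inside a single $M_{i^*}$, so the chain would have to be lengthened past $\lambda$ and the resulting model would overshoot cardinality $\lambda$.
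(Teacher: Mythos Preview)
Your proof is correct and is exactly the standard argument. The paper does not give a proof of this statement at all: it is stated as a \emph{Fact} with the remark ``The following is standard and will be used without comments,'' so there is nothing to compare against beyond noting that your write-up supplies the omitted details cleanly.
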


In particular, $\K$ has a saturated model in $\lambda$ if $\lambda = \lambda^{<\lambda}$.

We turn to studying what we can say about $\lambda$ when $\K$ has a saturated model in $\lambda$.

\begin{thm}\label{sat-stable-unbounded}
  Let $\K$ be an $\LS (\K)$-tame AEC with a monster model. Let $\LS (\K) < \lambda$. If $\K$ has a saturated model of cardinality $\lambda$ and $\K$ is stable in unboundedly many cardinals below $\lambda$, then $\K$ is stable in $\lambda$.
\end{thm}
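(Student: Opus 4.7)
The plan is to bound $|\gS(M)| \le \lambda$ for a saturated model $M$ of size $\lambda$; since such an $M$ is universal in $\K_\lambda$, every $M' \in \K_\lambda$ embeds into $M$ and inherits the bound, giving stability in $\lambda$. The hypothesis of stability in unboundedly many cardinals below $\lambda$ forces $\lambda > \lambda (\K)$, and I split on whether $\cf (\lambda) \ge \lambda (\K)$.

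If $\cf (\lambda) \ge \lambda (\K)$ (in particular whenever $\lambda$ is regular), the observation made in the excerpt that $[\lambda (\K), \infty) \cap \REG \subseteq \bclc (\K_\mu, \ltu)$ for any stability cardinal $\mu$ puts $\cf (\lambda)$ in $\bclc (\K_{\mu_0}, \ltu)$ for any stability $\mu_0 < \lambda$. Choosing a strictly increasing sequence $\seq{\mu_i : i < \cf (\lambda)}$ of stability cardinals cofinal in $\lambda$, Fact \ref{stab-spec-facts}(\ref{stab-spec-4}) then delivers $\lambda = \sup_i \mu_i \in \Stab (\K)$. Saturation is not actually used in this case.

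If $\cf (\lambda) < \lambda (\K)$, then $\lambda$ is singular and this is where saturation is essential. Pick $\seq{\mu_i : i < \cf (\lambda)}$ strictly increasing stability cardinals cofinal in $\lambda$ with $\mu_0 > \LS (\K)$. Using $\lambda$-saturation of $M$ together with stability in each $\mu_i$, I build an increasing continuous resolution $\seq{M_i : i \le \cf (\lambda)}$ inside $M$ with $M_{\cf (\lambda)} = M$, $\|M_i\| = \mu_i$ at successor stages (limits are handled by continuity), and $M_{i+1}$ universal over $M_i$; the universal $\mu_i$-sized extension of $M_i$ is found inside $M$ via saturation, and one then enlarges it to a $\mu_{i+1}$-sized $\lea$-substructure of $M$ that also absorbs the next block of a pre-chosen enumeration of $|M|$. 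Since $\lambda$ is a limit cardinal and $\LS (\K) + \cf (\lambda) < \lambda$, we have $(\LS (\K) + \cf (\lambda))^+ \le \lambda$, so $M$ is $(\LS (\K) + \cf (\lambda))^+$-saturated, and Lemma \ref{key-lc-lem} applies: each $p \in \gS (M)$ does not $\mu_{i}$-split over $M_{i}$ for some $i < \cf (\lambda)$. For each such $i$, Fact \ref{splitting-weak-uq} (applied with base $M_i$, universal extension $M_{i+1}$, ambient model $M$, and $(\mu_i,\lambda)$-tameness coming from $\LS (\K)$-tameness) shows that a non-$\mu_i$-splitting type over $M$ is determined by its restriction to $M_{i+1}$, so the number of such types is at most $|\gS (M_{i+1})| \le \mu_{i+1}$ by stability in $\mu_{i+1}$. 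Summing yields $|\gS (M)| \le \sum_{i < \cf (\lambda)} \mu_{i+1} = \lambda$.

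The main obstacle is the resolution in the second case: one must simultaneously arrange the prescribed growing sizes $\mu_i$, universality of $M_{i+1}$ over $M_i$, and exhaustion of $|M|$ at the top. This requires an interleaving of the universal-extension construction (which in turn relies on saturation and stability in $\mu_i$) with an enumeration of $|M|$ of length $\lambda$, but once this bookkeeping is in place the combination of Lemma \ref{key-lc-lem} and weak uniqueness converts the splitting locality statement into the sharp type-counting bound needed.
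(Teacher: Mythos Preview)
Your proof is correct and follows essentially the same strategy as the paper: build a resolution of the saturated model, obtain non-splitting bases (via Lemma \ref{key-lc-lem} in the low-cofinality case), and bound the number of types using weak uniqueness (Fact \ref{splitting-weak-uq}). The only difference is organizational: you split on whether $\cf(\lambda) \ge \lambda(\K)$ and dispatch the high-cofinality case directly via Fact \ref{stab-spec-facts}(\ref{stab-spec-4}) without invoking saturation, whereas the paper first reduces to $\lambda$ limit and then runs the resolution-and-counting argument uniformly, placing the corresponding case split (on $\delta$ versus $\lambda_1$) inside the proof of the non-splitting claim.
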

\begin{proof}
  By Fact \ref{stab-spec-facts}(\ref{stab-spec-3}), we can assume without loss of generality that $\lambda$ is a limit cardinal. Let $\delta := \cf{\lambda}$. Pick $\seq{\lambda_i : i \le \delta}$ strictly increasing continuous such that $\lambda_\delta = \lambda$, $\lambda_0 \ge \LS (\K)$, and $i < \delta$ implies that $\K$ is stable in $\lambda_{i + 1}$. Let $M \in \K_\lambda$ be saturated and let $\seq{M_i : i \le \delta}$ be an increasing continuous resolution of $M$ such that for each $i < \delta$, $M_i \in \K_{\lambda_i}$ and $M_{i + 2}$ is universal over $M_{i + 1}$. 

  \underline{Claim}: For any $p \in \gS (M)$, there exists $i < \delta$ such that $p$ does not $\lambda$-split over $M_i$.

  \underline{Proof of Claim}: If $\delta > \lambda_1$, then the result follows from Facts \ref{splitting-card} and \ref{splitting-lc}. If $\delta \le \lambda_1$, then this is given by Lemma \ref{key-lc-lem}. $\dagger_{\text{Claim}}$

  Now assume for a contradiction that $\K$ is not stable in $\lambda$ and let $\seq{p_i : i < \lambda^+}$ be distinct members of $\gS (M)$ (the saturated model must witness instability because it is universal). By the claim, for each $i < \lambda^+$ there exists $j_i < \delta$ such that $p$ does not $\lambda$-split over $M_{j_i}$. By the pigeonhole principle, without loss of generality $j_i = j_0$ for each $i < \lambda^+$. Now $|\gS (M_{j_0})| \le |\gS (M_{j_0 + 2})| = \|M_{j_0 + 2}\| < \lambda$, so by the pigeonhole principle again, without loss of generality $p_i \rest M_{j_0 + 2} = p_j \rest M_{j_0 + 2}$ for all $i < j < \lambda^+$. By weak uniqueness of non-$\lambda$-splitting and tameness, this implies that $p_i = p_j$, a contradiction.
\end{proof}

We have not used the full strength of the assumption that $\K$ is stable in unboundedly many cardinals below $\lambda$. For example, the same argument as in Theorem \ref{sat-stable-unbounded} proves:

\begin{thm}\label{sat-stable-unbounded-2}
  Let $\K$ be an $\LS (\K)$-tame AEC with a monster model. Let $\LS (\K) < \lambda$ and let $M \in \K_\lambda$ be a saturated model. If $\lambda$ is singular and strong limit, then $\K$ is stable in $\lambda$.
\end{thm}

We can also prove in ZFC that existence of a saturated model at a cardinal $\lambda < \lambda^{<\lambda}$ implies that the class is stable. We first recall the definition of another locality cardinal:

\begin{defin}[4.4 in \cite{tamenessone}]\label{slc-def}
  For $\K$ a $\LS (\K)$-tame AEC with a monster model, define $\slc (\K)$ to be the least cardinal $\mu > \LS (\K)$ such that for any $M \in \K$ and any $p \in \gS (M)$, there exists $M_0 \in \K_{<\mu}$ with $M_0 \lea M$ such that $p$ does not $\|M_0\|$-split over $M_0$. Set $\slc (\K) = \infty$ if there is no such cardinal.
\end{defin}

We have that stability is equivalent to boundedness of $\slc (\K)$:

\begin{thm}\label{stab-mubar}
  Let $\K$ be an $\LS (\K)$-tame AEC with a monster model. The following are equivalent:

  \begin{enumerate}
  \item\label{mubar-1} $\K$ is stable.
  \item\label{mubar-2} $\slc (\K) < H_1$.
  \item\label{mubar-3} $\slc (\K) < \infty$.
  \end{enumerate}
\end{thm}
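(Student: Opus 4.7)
The natural plan is to prove the cycle $(1) \Rightarrow (2) \Rightarrow (3) \Rightarrow (1)$, with $(2) \Rightarrow (3)$ immediate. For $(1) \Rightarrow (2)$, Fact \ref{stab-spec-facts}(\ref{stab-spec-1}) gives $\lambda(\K) < H_1$ once $\K$ is stable. Applying Fact \ref{splitting-lc} at $\mu := \lambda(\K)$ shows that every $p \in \gS(M)$ with $M \in \K_{\ge \mu}$ fails to $\mu$-split over some $M_0 \in \K_\mu$ with $M_0 \lea M$; for $M \in \K_{<\mu}$ one takes $M_0 := M$ (the splitting definition forces $N_1 = N_2 = M$ and hence $f = \id$, so $p$ trivially does not $\|M\|$-split over $M$). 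This bounds $\slc(\K) \le \lambda(\K)^+ < H_1$, the last inequality holding because $H_1$ is a limit cardinal.

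The substantive direction is $(3) \Rightarrow (1)$. Set $\mu := \slc(\K)$ and $\lambda := 2^\mu$, so that $\lambda = \lambda^{<\mu}$ and $\lambda \ge 2^{<\mu}$. By Fact \ref{sat-inacc} fix a $\mu$-saturated $M \in \K_\lambda$; the goal is to show $|\gS(M)| \le \lambda$. To each $p \in \gS(M)$, the definition of $\slc(\K)$ attaches some $M_0^p \lea M$ with $M_0^p \in \K_{<\mu}$ and $p$ not $\|M_0^p\|$-splitting over $M_0^p$. Using $\mu$-saturation of $M$, extend $M_0^p$ inside $M$ to a model $M_1^p \lea M$ of size $\|M_0^p\|$ that is universal over $M_0^p$. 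Now apply Fact \ref{splitting-weak-uq} with size parameter $\|M_0^p\| \ge \LS(\K)$, using that $\LS(\K)$-tameness implies $(\|M_0^p\|, \lambda)$-tameness: this yields that $p$ is the \emph{unique} type in $\gS(M)$ extending $p \rest M_1^p$ and not $\|M_0^p\|$-splitting over $M_0^p$. Hence $p \mapsto (M_0^p, M_1^p, p \rest M_1^p)$ is injective. A cardinal count finishes the argument: there are at most $\lambda^{<\mu} = \lambda$ choices of the pair $(M_0, M_1)$ of $\lea$-submodels of $M$ of size $<\mu$, and for each $M_1 \in \K_{<\mu}$ the standard bound $|\gS(M_1)| \le 2^{\|M_1\|} \le 2^{<\mu} \le \lambda$ limits the $p \rest M_1$-component. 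Therefore $|\gS(M)| \le \lambda$, witnessing stability in $\lambda$ and hence (1).

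The main obstacle is ensuring the hypotheses of weak uniqueness can be arranged for the witnesses produced by $\slc(\K)$: the model $M_0^p$ has a priori unknown size strictly below $\mu$ rather than fixed size $\mu$, so one must find an internal universal extension $M_1^p$ of $M_0^p$ of the \emph{same} size as $M_0^p$ inside $M$. This is exactly what forces the appeal to $\mu$-saturation of $M$; without it the reduction $p \mapsto (M_0^p, M_1^p, p \rest M_1^p)$ would break down. Once this is arranged, tameness combined with Fact \ref{splitting-weak-uq} makes the triple a complete invariant for $p$ and the rest is cardinal bookkeeping.
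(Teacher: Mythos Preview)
Your cycle $(1) \Rightarrow (2) \Rightarrow (3)$ is fine and matches the paper. The gap is in $(3) \Rightarrow (1)$, specifically in the sentence ``Using $\mu$-saturation of $M$, extend $M_0^p$ inside $M$ to a model $M_1^p \lea M$ of size $\|M_0^p\|$ that is universal over $M_0^p$.'' The existence of a universal extension of $M_0^p$ of the \emph{same} cardinality $\|M_0^p\|$ is equivalent to stability in $\|M_0^p\|$: if $M_1^p$ is universal over $M_0^p$ with $\|M_1^p\| = \|M_0^p\|$, then every type over $M_0^p$ is realized in $M_1^p$, so $|\gS(M_0^p)| \le \|M_0^p\|$. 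Since stability is exactly what you are trying to prove, this step is circular. The $\mu$-saturation of $M$ only tells you that $M$ itself (of size $\lambda$) is universal over $M_0^p$; it says nothing about the existence of a small universal extension.

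The paper handles this by working at two levels: it first fixes $\lambda_0 = \lambda_0^{<\mu}$ (e.g.\ $\lambda_0 = 2^\mu$) and then takes $\lambda = \lambda^{\lambda_0}$ (e.g.\ $\lambda = 2^{\lambda_0}$). The universal extension $M_0'$ is taken of size $\lambda_0$, not $\|M_0\|$; this is possible because there are at most $2^{\|M_0\|} \le \lambda_0$ isomorphism types of extensions of $M_0$ of size $\|M_0\|$, and $\mu$-model-homogeneity of $M$ lets you embed them one by one into a chain of total size $\le \lambda_0$ inside $M$. The pigeonhole on $p \rest M_0'$ then uses $\lambda = \lambda^{\lambda_0}$. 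Your argument can be repaired along the same lines by allowing $M_1^p$ to have size up to $2^{<\mu}$ and enlarging $\lambda$ accordingly (e.g.\ to $2^{2^\mu}$), after which your injection-and-count goes through.
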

\begin{proof}
  (\ref{mubar-1}) implies (\ref{mubar-2}) is because by Fact \ref{splitting-lc}, $\slc (\K) \le \lambda (\K)^+$ and by Fact \ref{stab-spec-facts}(\ref{stab-spec-1}), $\lambda (\K)^+ < H_1$. (\ref{mubar-2}) implies (\ref{mubar-3}) is trivial. To see that (\ref{mubar-3}) implies (\ref{mubar-1}), let $\mu := \slc (\K)$. Pick any $\lambda_0 \ge \LS (\K)$ such that $\lambda_0 = \lambda_0^{<\mu}$ (e.g.\ $\lambda_0 = 2^{\mu}$), and pick any $\lambda > \lambda_0$ such that $\lambda^{\lambda_0} = \lambda$ (e.g.\ $\lambda = 2^{\lambda_0}$). We claim that $\K$ is stable in $\lambda$. Let $M \in \K_\lambda$, and extend it to $M' \in \K_\lambda$ that is $\mu$-saturated. It is enough to see that $|\gS (M')| = \lambda$, so without loss of generality $M = M'$. Suppose that $|\gS (M)| > \lambda$ and let $\seq{p_i : i < \lambda^+}$ be distinct members. By definition of $\mu$, for each $i < \lambda^+$ there exists $M_i \in \K_{<\mu}$ such that $M_i \lea M$ and $p$ does not $\|M_i\|$-split over $M_i$. Since $\lambda = \lambda^{<\mu}$, we can assume without loss of generality that $M_i = M_0$ for all $i < \lambda^+$. Further, $|\gS (M_0)| \le 2^{<\mu} \le \lambda_0^{<\mu} = \lambda_0$, so we can pick $M_0' \lea M$ with $M_0' \in \K_{\lambda_0}$ such that $M_0'$ is universal over $M_0$. As $\lambda = \lambda^{\lambda_0}$, we can assume without loss of generality that $p_i \rest M_0' = p_j \rest M_0'$. By tameness and weak uniqueness of non-splitting, we conclude that $p_i = p_j$, a contradiction.
\end{proof}

We will use that failure of local character of splitting allows us to build a tree of types, see the proof of \cite[4.6]{tamenessone}.

\begin{fact}\label{tree-building-fact}
  Let $\K$ be an $\LS (\K)$-tame AEC with a monster model. Let $\LS (\K) < \mu$, with $\mu$ a regular cardinal. If $\slc (\K) > \mu$, then there exists an increasing continuous tree $\seq{M_\eta : \eta \in \fct{\le\mu}{2}}$, and tree of types $\seq{p_\eta : \eta \in \fct{\le\mu}{2}}$, and sets $\seq{A_\eta : \eta \in \fct{\le\mu}{2}}$ such that for all $\eta \in \fct{<\mu}{2}$:

  \begin{enumerate}
  \item $M_{\eta} \in \K_{<\mu}$.
  \item $p_\eta \in \gS (M_\eta)$.
  \item $A_\eta \subseteq |M_{\eta \smallfrown 0}| \cap |M_{\eta \smallfrown 1}|$.
  \item $|A_\eta| \le \LS (\K)$.
  \item $p_{\eta \smallfrown 0} \rest A_\eta \neq p_{\eta \smallfrown 1} \rest A_\eta$.
  \end{enumerate}
\end{fact}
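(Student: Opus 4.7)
My plan is to pick a fixed ``bad'' witness for $\slc(\K) > \mu$ inside an ambient model, then grow the tree inside that model using the splitting that occurs at every small submodel, with tameness producing the small witnessing set $A_\eta$.

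First I would unpack the hypothesis: since $\slc(\K) > \mu$, the defining property fails, so there exist $N^\ast \in \K$ and $p^\ast \in \gS(N^\ast)$ such that $p^\ast$ $\|M_0\|$-splits over $M_0$ for every $M_0 \lea N^\ast$ with $M_0 \in \K_{<\mu}$. The construction will produce, by recursion on $|\eta|$, not just the triple $(M_\eta, p_\eta, A_\eta)$ but also a $\K$-embedding $g_\eta : M_\eta \to N^\ast$ such that $g_\eta[M_\eta] \in \K_{<\mu}$ and $p_\eta = g_\eta^{-1}(p^\ast \rest g_\eta[M_\eta])$. Base case: pick any $M_{\langle\rangle} \in \K_{\LS(\K)}$ with $g_{\langle\rangle}$ the inclusion of some $\lea$-submodel of $N^\ast$. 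Limit stages $\delta < \mu$: put $M_\eta := \bigcup_{\beta<\delta} M_{\eta \rest \beta}$, $g_\eta := \bigcup_{\beta<\delta} g_{\eta \rest \beta}$, $p_\eta$ the pullback of $p^\ast \rest g_\eta[M_\eta]$; regularity of $\mu$ keeps $\|M_\eta\| < \mu$. Stage $\mu$: take unions again; the size condition is no longer required.

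The content is in the successor stage. Given $(M_\eta, g_\eta, p_\eta)$ with $M^\ast_\eta := g_\eta[M_\eta] \in \K_{<\mu}$, use splitting of $p^\ast$ over $M^\ast_\eta$ to get $N_1, N_2 \lea N^\ast$ with $M^\ast_\eta \lea N_\ell$, both in $\K_{<\mu}$, and $f : N_1 \cong_{M^\ast_\eta} N_2$ with $f(p^\ast \rest N_1) \neq p^\ast \rest N_2$. View the pair as two distinct types over the model $N_1$, namely $p^\ast \rest N_1$ and $f^{-1}(p^\ast \rest N_2)$. By $\LS(\K)$-tameness there is $B_1 \lea N_1$ in $\K_{\LS(\K)}$ with $p^\ast \rest B_1 \neq f^{-1}(p^\ast \rest N_2) \rest B_1$; equivalently, setting $B_2 := f[B_1] \lea N_2$, we have $f(p^\ast \rest B_1) \neq p^\ast \rest B_2$.

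Next I define $M_{\eta \smallfrown 0}$ and $M_{\eta \smallfrown 1}$ set-theoretically as isomorphic copies of $N_1$ and $N_2$, extending $M_\eta$ (i.e.\ $g_{\eta \smallfrown \ell}$ extends $g_\eta$ with image $N_\ell$), where I rename fresh elements so that $g_{\eta \smallfrown 0}^{-1} \rest B_1 = g_{\eta \smallfrown 1}^{-1} \rest B_2 \circ f \rest B_1$; concretely, I pick a fresh set $A_\eta$ of cardinality $\le \LS(\K)$ disjoint from everything built so far and, using $f \rest B_1 : B_1 \cong B_2$ which fixes $B_1 \cap M^\ast_\eta$, identify the elements of $B_1 \setminus M^\ast_\eta$ and of $B_2 \setminus M^\ast_\eta$ with the elements of $A_\eta$, so that $A_\eta \subseteq |M_{\eta \smallfrown 0}| \cap |M_{\eta \smallfrown 1}|$. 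Set $p_{\eta \smallfrown \ell} := g_{\eta \smallfrown \ell}^{-1}(p^\ast \rest N_\ell)$. Under this identification, $g_{\eta \smallfrown 1} \circ (g_{\eta \smallfrown 0})^{-1}$ agrees with $f$ on $B_1$, so $p_{\eta \smallfrown 0} \rest A_\eta = p_{\eta \smallfrown 1} \rest A_\eta$ would pull back to $f(p^\ast \rest B_1) = p^\ast \rest B_2$, which fails by choice of $B_1$. Thus condition (5) holds.

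The main obstacle I anticipate is the set-theoretic bookkeeping in the successor step: arranging the universes of $M_{\eta \smallfrown 0}$ and $M_{\eta \smallfrown 1}$ so that a common set $A_\eta$ sits literally inside both while preserving the abstract isomorphism types (so that their images under $g_{\eta \smallfrown \ell}$ land on $B_1$ and $B_2$ respectively, with the required $f$-compatibility). Everything else is coherent induction plus invocations of splitting and tameness already available in the paper.
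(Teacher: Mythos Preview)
The paper does not give its own proof of this statement: it is recorded as a \emph{Fact} with the pointer ``see the proof of \cite[4.6]{tamenessone}''. Your proposal is essentially a correct reconstruction of that Grossberg--VanDieren argument. You correctly unpack $\slc(\K)>\mu$ to get a single bad $(N^\ast,p^\ast)$, maintain coherence via embeddings $g_\eta$ into $N^\ast$, use $\|M^\ast_\eta\|$-splitting (whose witnesses $N_1,N_2$ lie in $\K_{\le\|M^\ast_\eta\|}\subseteq\K_{<\mu}$ by the definition of splitting) to branch, and invoke $\LS(\K)$-tameness to locate the small witness $B_1$. The renaming so that the preimages of $B_1$ and $B_2=f[B_1]$ coincide as the set $A_\eta$ is exactly the bookkeeping you flag; since $f$ fixes $M^\ast_\eta$, the parts of $B_1,B_2$ already inside $M^\ast_\eta$ pull back identically via $g_\eta^{-1}$, and the remaining elements can be freely matched, so there is no obstruction.

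One small clarification worth making explicit: your $A_\eta$ should be the full preimage of $B_1$ (equivalently of $B_2$ via $f$), not merely the fresh part $B_1\setminus M^\ast_\eta$, since the inequality $f(p^\ast\rest B_1)\neq p^\ast\rest B_2$ is over all of $B_1$. With that adjustment the verification of condition~(5) goes through exactly as you describe.
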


\begin{thm}\label{sat-stable}
  Let $\K$ be an $\LS (\K)$-tame AEC with a monster model. Let $\LS (\K) < \lambda < \lambda^{<\lambda}$. If $\K$ has a saturated model of cardinality $\lambda$, then $\slc (\K) \le \lambda^+$. In particular, $\K$ is stable.
\end{thm}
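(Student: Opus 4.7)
The ``In particular'' clause is immediate from Theorem \ref{stab-mubar}, since $\slc (\K) \le \lambda^+ < \infty$ forces $\K$ to be stable. For the main assertion I would argue by contradiction, assuming $\slc (\K) > \lambda^+$, and produce more than $\lambda$ distinct elements inside the saturated model $M$, contradicting $\|M\| = \lambda$.

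First I dispose of the strong limit case. If $\lambda$ is a strong limit, then it must be singular, since a regular strong limit is inaccessible and would force $\lambda^{<\lambda} = \lambda$, against the hypothesis. Then for every $M_0 \lea M$ of cardinality less than $\lambda$ one has $|\gS (M_0)| < \lambda$ (the parenthetical remark in Theorem \ref{sat-stable-unbounded-2}), so that theorem yields that $\K$ is stable in $\lambda$; by Fact \ref{splitting-lc} this forces $\slc (\K) \le \lambda^+$, contradicting the assumption. I may therefore assume that $\lambda$ is not a strong limit, so that there is a cardinal $\alpha < \lambda$ with $2^\alpha > \lambda$. After replacing $\alpha$ by $\max (\alpha, \LS (\K))$ and setting $\mu := \alpha^+$ (or $\mu := \lambda$ in the degenerate case $\lambda = \LS (\K)^+$), I obtain a regular cardinal $\mu$ with $\LS (\K) < \mu \le \lambda$ and an ordinal $\alpha < \mu$ such that $2^\alpha > \lambda$.

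Apply Fact \ref{tree-building-fact} at this $\mu$, using $\slc (\K) > \lambda^+ \ge \mu$, to get $\seq{M_\eta : \eta \in \fct{\le \mu}{2}}$, $\seq{p_\eta}$, and $\seq{A_\eta}$ with $M_\eta \in \K_{<\mu}$. Using that the saturated $M$ is universal over each of its submodels of cardinality less than $\lambda$, I would inductively build an isomorphic copy $\seq{M_\eta' : \eta \in \fct{\le \alpha}{2}}$ of the initial segment of the tree inside $M$, transporting the witness sets to $A_\eta'$ and the types to $p_\eta'$; at each successor stage both branching types are realizable inside $M$ because their base has size less than $\lambda$, so saturation suffices. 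At level $\alpha$ each $M_\eta'$ still has size less than $\lambda$, so $p_\eta'$ is realized by some $a_\eta \in M$. For incomparable $\eta_1, \eta_2 \in \fct{\alpha}{2}$ first differing at $\beta < \alpha$, the coherence of the tree of types together with the splitting condition $p_{\eta_1 \rest (\beta+1)} \rest A_{\eta_1 \rest \beta} \neq p_{\eta_2 \rest (\beta+1)} \rest A_{\eta_1 \rest \beta}$ transfers through the embedding to $\gtp (a_{\eta_1} / A') \neq \gtp (a_{\eta_2} / A')$, so $a_{\eta_1} \neq a_{\eta_2}$. This produces $2^\alpha > \lambda$ distinct elements of $M$, the desired contradiction.

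The main obstacle I expect is building the tree embedding coherently while preserving the splitting witnesses: at each successor step one must use saturation of $M$ to realize both divergent types above the image of $M_\eta$ in such a way that the image of $A_\eta$ still separates them inside $M$. Once this is set up, the conclusion is a counting argument on the $2^\alpha$ pairwise distinct realizations inside a model of cardinality $\lambda$.
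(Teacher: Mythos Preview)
Your proposal is correct and follows essentially the same two-case strategy as the paper: dispose of the strong-limit case via Theorem \ref{sat-stable-unbounded-2}, and in the remaining case embed the splitting tree of Fact \ref{tree-building-fact} into the saturated model to get too many distinct realizations. One small gap to patch: ``not a strong limit'' only gives $2^\alpha \ge \lambda$ for some $\alpha < \lambda$, not $2^\alpha > \lambda$; you need one more line (using $\lambda < \lambda^{<\lambda}$) to rule out the possibility that $2^{\alpha'} = \lambda$ for every $\alpha' \in [\alpha,\lambda)$, since that would force $\lambda^{<\lambda} = \lambda$. The paper organizes the argument slightly differently, first proving the claim $2^{<\lambda} = \lambda$ and then deducing that $\lambda$ is strong limit; it also takes $\mu$ to be the \emph{minimal} cardinal with $2^\mu > \lambda$ and runs the tree to its full height $\mu$, whereas you take $\mu = \alpha^+$ and stop at level $\alpha$. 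Your variant is arguably cleaner, since taking $\mu = \alpha^+$ makes the regularity of $\mu$ required by Fact \ref{tree-building-fact} automatic.
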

\begin{proof}
  The last sentence is Theorem \ref{stab-mubar}. Now suppose for a contradiction that $\slc (\K) > \lambda^+$.

  \underline{Claim}: $2^{<\lambda} = \lambda$.

  \underline{Proof of Claim}: Suppose not and let $\mu < \lambda$ be minimal such that $2^{\mu} > \lambda$. Then $\mu$ is regular so let $\seq{M_\eta : \eta \in \fct{\le\mu}{2}}$, $\seq{p_\eta : \eta \in \fct{\le\mu}{2}}$, and $\seq{A_\eta : \eta \in \fct{\le\mu}{2}}$ be as given by Fact \ref{tree-building-fact}. Since $2^{<\mu} \le \lambda$, we can use universality of $M$ to assume without loss of generality that $M_\eta \lea M$ for each $\eta \in \fct{< \mu}{2}$. By continuity of the tree, $M_\eta \lea M$ for each $\eta \in \fct{\mu}{2}$. Since $M$ is saturated, it realizes all types over $M_\eta$, for each $\eta \in \fct{\mu}{2}$. By construction of the tree, each of these types has a different realization so in particular, $2^{\mu} \le \lambda$, a contradiction. $\dagger_{\text{Claim}}$

  Now if there exists $\mu < \lambda$ such that $2^\mu = \lambda$, then $2^{\mu'} = \lambda$ for all $\mu' \in [\mu, \lambda)$, hence $\lambda = \lambda^{<\lambda}$, which we assumed was not true. Therefore $\lambda$ is strong limit. Since $\lambda < \lambda^{<\lambda}$, this implies that $\lambda$ is singular. By Theorem \ref{sat-stable-unbounded-2}, $\K$ is stable in $\lambda$. By Fact \ref{splitting-lc}, $\slc (\K) \le \lambda^+$, as desired.
\end{proof}

We have arrived to the following. Note that we need some set-theoretic hypotheses (e.g.\ assuming SCH, $\theta (H_1) = 2^{H_1}$, see Fact \ref{card-arith-fact}) to get that $\theta (H_1) < \infty$ otherwise the result holds vacuously.

\begin{cor}\label{sat-spec}
  Let $\K$ be an $\LS (\K)$-tame AEC with a monster model. Let $\chi_0 < H_1$ be as given by Fact \ref{bv-sat}. 
   Let $\lambda > \chi_0 + \theta (\lambda (\K))$ (recall Definition \ref{theta-def}). The following are equivalent:

  \begin{enumerate}
  \item\label{sat-spec-1} $\K$ has a saturated model of cardinality $\lambda$.
  \item\label{sat-spec-2} $\lambda = \lambda^{<\lambda}$ or $\K$ is stable in $\lambda$.
  \end{enumerate}
\end{cor}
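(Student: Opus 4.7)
The plan is to prove the two implications separately, in each case appealing to the battery of results established earlier in the section.

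For $(2) \Rightarrow (1)$, I split on the two clauses of (2). If $\lambda = \lambda^{<\lambda}$, then Fact \ref{sat-inacc} directly produces a saturated model of cardinality $\lambda$. If instead $\K$ is stable in $\lambda$, I treat the successor and limit cases separately. When $\lambda = \nu^+$ is a successor, stability in $\lambda$ allows me to build a $(\lambda, \lambda)$-limit model, and this model is saturated by a standard argument: every $\lea$-submodel of size $\le \nu$ is eventually absorbed into the chain by regularity of $\lambda$, and universality at the next step realizes every type over it. When $\lambda$ is a limit cardinal, I invoke Lemma \ref{limit-above-theta} (using $\lambda > \theta (\lambda (\K))$) to see that $\K$ is stable in unboundedly many cardinals below $\lambda$; since $\lambda > \chi_0$, I can then choose $\mu_0 \in [\chi_0, \lambda) \cap \Stab (\K)$ and apply Theorem \ref{sat-existence} with this $\mu_0$ and $\mu := \lambda$ to obtain a saturated model in $\lambda$.

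For $(1) \Rightarrow (2)$, assume $\K$ has a saturated model of cardinality $\lambda$. It suffices to show that if $\lambda < \lambda^{<\lambda}$, then $\K$ is stable in $\lambda$. Under this assumption, Theorem \ref{sat-stable} gives $\slc (\K) \le \lambda^+$, so $\K$ is stable. Crucially, the \emph{proof} of Theorem \ref{sat-stable} (and not merely its statement) establishes $2^{<\lambda} = \lambda$; combined with $\lambda < \lambda^{<\lambda}$, this forces $\lambda$ to be strong limit and therefore singular. Since $\lambda$ is thus a limit cardinal exceeding $\theta (\lambda (\K))$, Lemma \ref{limit-above-theta} again yields stability in unboundedly many cardinals below $\lambda$, and then Theorem \ref{sat-stable-unbounded} delivers stability in $\lambda$, as desired.

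The main obstacle is bookkeeping: each cited result carries multiple hypotheses that must be verified in the setting of $\lambda$. In particular, in $(1) \Rightarrow (2)$ one must work with the proof of Theorem \ref{sat-stable} rather than its stated conclusion in order to extract that $\lambda$ is strong limit, since that is precisely what allows Lemma \ref{limit-above-theta} and then Theorem \ref{sat-stable-unbounded} to be brought to bear. Once this is done, every cited result slots in straightforwardly.
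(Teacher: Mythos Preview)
Your $(2)\Rightarrow(1)$ direction matches the paper's proof. The gap is in $(1)\Rightarrow(2)$: you claim that the proof of Theorem~\ref{sat-stable} establishes $2^{<\lambda}=\lambda$, but that claim is proved there only under the \emph{contradiction hypothesis} $\slc(\K)>\lambda^+$. The tree-building via Fact~\ref{tree-building-fact} requires $\slc(\K)>\mu$ for the relevant $\mu<\lambda$, and once Theorem~\ref{sat-stable} is established you only know $\slc(\K)\le\lambda^+$; you have no license to conclude $2^{<\lambda}=\lambda$ afterward. Concretely, $\lambda$ could be a successor $\nu^+$ with $2^\nu>\lambda$ (this is compatible with $\lambda\ge\theta(\lambda(\K))$, which only forces $\nu^{\lambda(\K)}\le\nu^+$), and then your conclusion that $\lambda$ is strong limit singular is simply false.

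The paper's route avoids this entirely: after Theorem~\ref{sat-stable} gives that $\K$ is stable, one uses the hypothesis $\lambda\ge\theta(\lambda(\K))$ to see that $\lambda$ is almost $\lambda(\K)$-closed, and then Lemma~\ref{closed-lem} yields the dichotomy ``$\K$ is stable in $\lambda$'' or ``$\K$ is stable in unboundedly many cardinals below $\lambda$''. The first alternative finishes immediately; the second feeds into Theorem~\ref{sat-stable-unbounded}. No appeal to the internal structure of the proof of Theorem~\ref{sat-stable} is needed, and the successor case is handled automatically.
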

\begin{proof}
  First assume (\ref{sat-spec-2}). If $\lambda = \lambda^{<\lambda}$, we get a saturated model of cardinality $\lambda$ using Fact \ref{sat-inacc}, so assume that $\K$ is stable in $\lambda$. If $\lambda$ is a successor, the $(\lambda, \lambda)$-limit model is saturated, so assume that $\lambda$ is limit. By Lemma \ref{limit-above-theta}, $\K$ is stable in unboundedly many cardinals below $\lambda$. By Theorem \ref{sat-existence}, $\K$ has a saturated model of cardinality $\lambda$.

  Now assume (\ref{sat-spec-1}) and $\lambda < \lambda^{<\lambda}$. By Theorem \ref{sat-stable}, $\K$ is stable. By Lemma \ref{closed-lem}, either $\K$ is stable in $\lambda$, or there are unboundedly many stability cardinals below $\lambda$. In the former case we are done and in the latter case, we can use Theorem \ref{sat-stable-unbounded}.
\end{proof}

When $\K$ is superstable (i.e.\ $\chi (\K) = \aleph_0$, see Definition \ref{ss-chi-def}), we obtain a characterization in ZFC.

\begin{cor}\label{ss-sat}
  Let $\K$ be an $\LS (\K)$-tame AEC with a monster model. The following are equivalent:

  \begin{enumerate}
  \item\label{ss-sat-1} $\K$ is superstable.
  \item\label{ss-sat-2} $\K$ has a saturated model of size $\lambda$ for every $\lambda \ge \lambda' (\K) + \LS (\K)^+$.
  \item\label{ss-sat-3} There exists $\mu$ such that $\K$ has a saturated model of size $\lambda$ for every $\lambda \ge \mu$.
  \end{enumerate}
\end{cor}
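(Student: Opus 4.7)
I would prove the equivalences cyclically as $(1) \Rightarrow (2) \Rightarrow (3) \Rightarrow (1)$, with the middle implication being immediate by taking $\mu := \lambda'(\K) + \LS(\K)^+$.

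For $(1) \Rightarrow (2)$, I would start from $\chi(\K) = \aleph_0$. By Corollary \ref{ss-cor}, $\K$ is $\lambda'(\K)$-superstable, and Fact \ref{ss-upward} upgrades this to $\mu$-superstability (hence stability in $\mu$) for every $\mu \ge \lambda'(\K)$. Fix $\lambda \ge \lambda'(\K) + \LS(\K)^+$. If $\lambda$ is a successor cardinal, then $\K$ is stable in $\lambda$ so the $(\lambda, \lambda)$-limit model exists in $\K_\lambda$ and is saturated. If $\lambda$ is a limit cardinal, then $\K$ is stable in $\lambda$ and in unboundedly many cardinals below $\lambda$, and every regular cardinal lies in $\bclc(\K_{\mu_0}, \ltu)$ for each stability cardinal $\mu_0 \ge \lambda'(\K)$; Theorem \ref{sat-existence} then yields a saturated model of size $\lambda$.

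For $(3) \Rightarrow (1)$, I would fix $\mu$ witnessing (3). First, pick any $\lambda \ge \mu$ of countable cofinality, so that $\lambda < \lambda^{\aleph_0} \le \lambda^{<\lambda}$ by K\"onig; applying Theorem \ref{sat-stable} to the saturated model at $\lambda$ given by (3) shows that $\K$ is stable, hence $\lambda(\K) < \infty$. Set $\lambda_0 := \mu^{\lambda(\K)}$, which is a stability cardinal $\ge \mu$ by Fact \ref{stab-spec-facts}(\ref{stab-spec-2}). I would then prove by transfinite induction on $\lambda \ge \lambda_0$ that $\K$ is stable in $\lambda$: at successor stages $\lambda = \nu^+$, use Fact \ref{stab-spec-facts}(\ref{stab-spec-3}) on the inductive hypothesis for $\nu$; at limit stages, the inductive hypothesis supplies stability in all of $[\lambda_0, \lambda)$ (so in particular in unboundedly many cardinals below $\lambda$), and (3) supplies a saturated model of size $\lambda$, so Theorem \ref{sat-stable-unbounded} delivers stability in $\lambda$. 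Thus $\K$ is stable on a tail of cardinals, and Corollary \ref{ss-cor} yields $\chi(\K) = \aleph_0$.

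The main obstacle is the $(3) \Rightarrow (1)$ direction: one must translate a purely existential hypothesis about saturated models into a spectrum statement about counting Galois types. The transfinite induction is the crucial engineering step, and its limit stages rely essentially on Theorem \ref{sat-stable-unbounded}, which in turn needs both a saturated model at the target cardinal (from (3)) and stability in unboundedly many smaller cardinals (from the inductive hypothesis). The starting point of the induction, supplied by applying Theorem \ref{sat-stable} at a cardinal with $\lambda < \lambda^{<\lambda}$, is what makes the whole argument launch; without first establishing that $\K$ is stable at all, there is no stability cardinal $\lambda_0 \ge \mu$ to begin the induction from.
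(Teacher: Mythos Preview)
Your argument for $(3)\Rightarrow(1)$ is essentially the paper's: first use Theorem~\ref{sat-stable} at some $\lambda\ge\mu$ with $\lambda<\lambda^{<\lambda}$ to get stability, then induct on $\lambda\ge\mu^{\lambda(\K)}$, invoking Theorem~\ref{sat-stable-unbounded} at each step (the paper does not split into successor and limit cases, but this is cosmetic). The implication $(2)\Rightarrow(3)$ is trivial in both.

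For $(1)\Rightarrow(2)$ there is a gap in your approach. Your plan for limit $\lambda$ is to apply Theorem~\ref{sat-existence}, but that theorem has a threshold: it only applies to $\lambda>\chi_0$, where $\chi_0<H_1$ is the constant from Fact~\ref{bv-sat}. Nothing in the hypotheses guarantees $\lambda'(\K)+\LS(\K)^+>\chi_0$; indeed the paper gives no bound on $\lambda'(\K)$ without assuming weak continuity of splitting (Theorem~\ref{locality-card-bounds}), and even then the bound is $\hanf{\lambda(\K)}$, not a comparison with $\chi_0$. So for limit $\lambda$ in the range $[\lambda'(\K)+\LS(\K)^+,\chi_0]$ your argument does not go through. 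The paper sidesteps this by citing external results (\cite[6.9]{vv-symmetry-transfer-v3} together with \cite{vandieren-symmetry-apal}): once $\K$ is $\lambda'(\K)$-superstable, those references directly yield saturated models in every $\lambda>\lambda'(\K)$, without the $\chi_0$ threshold.
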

\begin{proof}
  (\ref{ss-sat-1}) implies (\ref{ss-sat-2}) is known (use Corollary \ref{ss-cor} to see that $\K$ is $\lambda' (\K)$-superstable, then apply \cite[6.10]{vv-symmetry-transfer-afml} together with \cite{vandieren-symmetry-apal}), and (\ref{ss-sat-2}) implies (\ref{ss-sat-3}) is trivial. Now assume (\ref{ss-sat-3}). By Theorem \ref{sat-stable}, $\K$ is stable. We prove by induction on $\lambda \ge \mu^{\lambda (\K)}$ that $\K$ is stable in $\lambda$. This implies superstability by Corollary \ref{ss-cor}.

  If $\lambda = \mu^{\lambda (\K)}$, then $\lambda^{\lambda (\K)} = \lambda$ so $\K$ is stable in $\lambda$ (see Fact \ref{stab-spec-facts}(\ref{stab-spec-2})). Now if $\lambda > \mu^{\lambda (\K)}$, then by the induction hypothesis $\K$ is stable in unboundedly many cardinals below $\lambda$, hence the result follows from Theorem \ref{sat-stable-unbounded}.
\end{proof}

\section{Characterizations of stability}

In \cite{gv-superstability-jsl}, Grossberg and the author characterize superstability in terms of the behavior of saturated, limit, and superlimit models. We show that stability can be characterized analogously. In fact, we are able to give a list of statements equivalent to ``$\chi \in \uchi (\K)$''.

\begin{remark}
  Another important characterization of superstability in \cite{gv-superstability-jsl} was solvability: roughly, the existence of an EM blueprint generating superlimit models. We do not know if there is a generalization of solvability to stability. Indeed it follows  from the proof of \cite[2.2.1]{shvi635}  that even an EM blueprint generating just universal (not superlimit) models would imply superstability (see also \cite{shvi-notes-apal}).
\end{remark}

We see the next definition as the ``stable'' version of a superlimit model. Very similar notions appear already in \cite{sh88}.

\begin{defin}
  Let $\K$ be an AEC. For $\chi$ a regular cardinal, $M \in \K_{\ge \chi}$ is \emph{$\chi$-superlimit} if:

  \begin{enumerate}
  \item $M$ has a proper extension.
  \item $M$ is universal in $\K_{\|M\|}$.
  \item For any increasing chain $\seq{M_i : i < \chi}$, if $i < \chi$ implies $M \cong M_i$, then $M \cong \bigcup_{i < \chi} M_i$.
  \end{enumerate}
\end{defin}

In \cite{gv-superstability-jsl}, it was shown that one of the statements below holds for all $\chi$ if and only if all of them hold for all $\chi$. The following characterization is a generalization to strictly stable AECs, where $\chi$ is fixed at the beginning.

\begin{thm}\label{charact-thm}
  Let $\K$ be a (not necessarily stable) $\LS (\K)$-tame AEC with a monster model. Let $\chi$ be a regular cardinal. The following are equivalent:

  \begin{enumerate}
    \setcounter{enumi}{-1}
  \item\label{charact-0} $\chi \in \uchi (\K)$.
  \item\label{charact-05} For unboundedly many $H_1$-closed stability cardinals $\mu$, $\cf{\mu} = \chi$.
  \item\label{charact-1} For unboundedly many cardinals $\mu$, there exists a saturated $(\mu, \chi)$-limit model.
  \item\label{charact-2} For unboundedly many $\mu$, the union of any increasing chain of $\mu$-saturated models of length $\chi$ is $\mu$-saturated.
  \item\label{charact-3} For unboundedly many stability cardinals $\mu$, there is a $\chi$-superlimit model of cardinality $\mu$.
  \item\label{charact-4} For unboundedly many $H_1$-closed cardinals $\mu$ with $\cf{\mu} = \chi$, there is a saturated model of cardinality $\mu$.
  \end{enumerate}
\end{thm}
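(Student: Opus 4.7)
The plan is to establish the equivalence via a network of implications centered on (0), using as main engines Theorem \ref{key-thm}, Theorem \ref{sat-existence}, Lemma \ref{key-lc-lem}, and Fact \ref{bv-sat}.

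For (0) $\iff$ (0.5) $\iff$ (4), I would prove (0) $\Rightarrow$ (0.5) by building a strictly increasing sequence $\seq{\mu_i : i < \chi}$ of stability cardinals above a witness $\mu^\ast$ to (0), chosen with $\mu_{i+1} \ge 2^{\mu_i + H_1}$; the supremum $\mu := \sup_{i<\chi}\mu_i$ has cofinality $\chi$, is $H_1$-closed, and is a stability cardinal by Fact \ref{stab-spec-facts}(\ref{stab-spec-4}) (using $\chi \in \bclc(\K_{\mu_0}, \ltu)$ from monotonicity of $\bclc$ in the base). For (0.5) $\Rightarrow$ (0), $H_1$-closedness together with $\lambda(\K) < H_1$ (Fact \ref{stab-spec-facts}(\ref{stab-spec-1})) forces unboundedly many stability cardinals below $\mu$ via Fact \ref{stab-spec-facts}(\ref{stab-spec-2}), so Theorem \ref{key-thm} delivers $\chi = \cf{\mu} \in \bclc(\K_\mu, \ltu)$. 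The equivalence (0.5) $\iff$ (4) follows from Theorem \ref{sat-existence} (forward: the $(\mu, \chi)$-limit is saturated) and Theorem \ref{sat-stable-unbounded} (backward: a saturated model at an $H_1$-closed $\mu$ forces stability in $\mu$).

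For (0) $\iff$ (1), the forward direction is Theorem \ref{sat-existence} again. The reverse applies Lemma \ref{key-lc-lem} to the resolution of a saturated $(\mu, \chi)$-limit (whose existence forces stability in $\mu$), then uniqueness of $(\mu, \chi)$-limit models propagates the conclusion to arbitrary $\ltu$-chains of length $\chi$ in $\K_\mu$. For (0) $\Rightarrow$ (2), apply Fact \ref{bv-sat} with $\delta = \chi$ and a fixed witness $\mu_0$ to (0). For (2) $\Rightarrow$ (0), at unboundedly many $\mu = \mu^{<\mu}$ Fact \ref{sat-inacc} yields saturated models of size $\mu$; I would chain them with universal extensions (from model-homogeneity of saturated models), use (2) to make the union $\mu$-saturated, obtaining a saturated $(\mu, \chi)$-limit and hence reducing to (1) $\Rightarrow$ (0).

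For (0) $\iff$ (3), the forward direction combines (1) and (2): the saturated $(\mu, \chi)$-limit $M$ is universal in $\K_\mu$, has a proper extension, and a $\chi$-chain of isomorphic copies of $M$ is a chain of saturated models whose union is $\mu$-saturated by (2), hence saturated of size $\mu$, hence $\cong M$ by uniqueness of saturated models---so $M$ is $\chi$-superlimit. The reverse (3) $\Rightarrow$ (0) is the main obstacle. Given a $\chi$-superlimit $M$ in a stability cardinal $\mu$, I would build an interlaced chain $M = M_0 \lea N_1 \lea M_2 \lea N_3 \lea \cdots$ of length $\chi$ where each $M_{2i} \cong M$ and each $N_{2i+1}$ is (using stability in $\mu$) a $(\mu, \chi)$-limit over $N_{2i-1}$ chosen to contain $M_{2i}$. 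The even subchain's union is $\cong M$ by the $\chi$-superlimit property, while the odd subchain's union is a $(\mu, \chi)$-limit over $N_1$, so $M$ is identified with the $(\mu, \chi)$-limit model $N$. Once $M \cong N$, I would argue that $M$ is $(\LS(\K) + \chi)^+$-saturated by realizing types over small submodels in successor levels of the limit resolution (via universality of $N_{i+1}$ over $N_i$ in $\K_\mu$), and Lemma \ref{key-lc-lem} then gives $\chi \in \bclc(\K_\mu, \ltu)$. The delicate step is arranging the containment $M_{2i} \lea N_{2i+1}$ via universality of $N_{2i+1}$ over $N_{2i-1}$ and a careful amalgamation inside the monster, without circular appeal to (0).
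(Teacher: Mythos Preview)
Your network of implications largely matches the paper's, and the parts routing through Theorem \ref{key-thm}, Theorem \ref{sat-existence}, Fact \ref{bv-sat}, and Lemma \ref{key-lc-lem} are correct. There is, however, a genuine gap in your (\ref{charact-2}) $\Rightarrow$ (\ref{charact-0}).

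You write: ``at unboundedly many $\mu = \mu^{<\mu}$ Fact \ref{sat-inacc} yields saturated models of size $\mu$; I would chain them with universal extensions \ldots, use (\ref{charact-2}) to make the union $\mu$-saturated.'' But condition (\ref{charact-2}) only gives you an unbounded set of cardinals $\mu$ at which the chain-closure property holds; nothing guarantees that this set meets the set of $\mu$ with $\mu = \mu^{<\mu}$. So you cannot apply (\ref{charact-2}) at the cardinal where you have produced saturated models, and you do not yet know $\K$ is stable (so you cannot produce limit models either). The paper's move here is different and worth internalizing: choose an increasing continuous sequence $\seq{\mu_i : i \le \chi}$ of cardinals at which (\ref{charact-2}) holds, with $2^{\mu_i} < \mu_{i+1}$, and build a chain $\seq{M_i : i < \chi}$ with $M_{i+1} \in \K_{2^{\mu_i}}$ that is $\mu_i$-saturated (this uses only Fact \ref{sat-inacc}, not stability). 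The union is then $\mu_i$-saturated for every $i$ by (\ref{charact-2}), hence saturated of size $\mu := \mu_\chi$; since $\cf{\mu} = \chi$ forces $\mu < \mu^{<\mu}$, Theorem \ref{sat-stable} gives stability, and one can then proceed.

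A smaller issue: in your (\ref{charact-3}) $\Rightarrow$ (\ref{charact-0}) you claim the $(\mu,\chi)$-limit is $(\LS(\K)+\chi)^+$-saturated ``via universality of $N_{i+1}$ over $N_i$''. That argument only yields $\chi$-saturation (a submodel of size $\le \LS(\K)+\chi$ need not lie in a single $N_i$ when $\chi \le \LS(\K)$). The paper instead observes that a $\chi$-superlimit at a stability cardinal is \emph{fully} saturated: your interlacing argument already shows it is a $(\mu,\chi)$-limit, and iterating (interlacing with $(\mu,\mu)$-limits rather than mere universal extensions) shows it is a $(\mu,\mu)$-limit, hence saturated. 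With full saturation in hand, Lemma \ref{key-lc-lem} applies cleanly.
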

\begin{proof}
  We first show that each of the conditions implies that $\K$ is stable. If (\ref{charact-0}) holds, then by definition of $\uchi (\K)$ we must have that $\K$ is stable. If (\ref{charact-1}) holds, then there exists in particular limit models and this implies stability. Also (\ref{charact-05}) and (\ref{charact-3}) imply stability by definition. If (\ref{charact-4}) holds, then we have stability by Theorem \ref{sat-stable}. Finally, assume that (\ref{charact-2}) holds. Build an increasing continuous chain of cardinals $\seq{\mu_i : i \le \chi}$ such that $\chi + \LS (\K) < \mu_0$, for each $i \le \chi$ any increasing chain of $\mu_i$-saturated models of length $\chi$ is $\mu_i$-saturated, and $2^{\mu_i} < \mu_{i + 1}$ for all $i < \chi$. Let $\mu := \mu_{\chi}$. Build an increasing chain $\seq{M_i : i < \chi}$ such that $M_{i + 1} \in \K_{2^{\mu_i}}$ and $M_{i + 1}$ is $\mu_i$-saturated. Now by construction $M := \bigcup_{i < \chi} M_i$ is in $\K_{\mu}$ and is saturated. Since $\cf{\mu} = \chi$, we have that $\mu < \mu^{\chi} \le \mu^{<\mu}$. By Theorem \ref{sat-stable}, $\K$ is stable. We have shown that we can assume without loss of generality that $\K$ is stable.

  We now show that (\ref{charact-2}) is equivalent to (\ref{charact-3}). Indeed, if we have a $\chi$-superlimit at a stability cardinal $\mu$, then it must be saturated and witnesses that the union of an increasing chain of $\mu$-saturated models of length $\chi$ is $\mu$-saturated. Conversely, we have shown in the first paragraph of this proof how to build a saturated model in a cardinal $\mu$ such that the union of an increasing chain of $\mu$-saturated models of length $\chi$ is $\mu$-saturated. Such a saturated model must be a $\chi$-superlimit.

  We also have that (\ref{charact-3}) implies (\ref{charact-1}), as it is easy to see that a $\chi$-superlimit model in a stability cardinal $\mu$ must be unique and also a $(\mu, \chi)$-limit model. Also, (\ref{charact-0}) implies (\ref{charact-2}) (Fact \ref{bv-sat}) and (\ref{charact-1}) implies (\ref{charact-0}) (Lemma \ref{key-lc-lem}). Therefore (\ref{charact-0}), (\ref{charact-1}), (\ref{charact-2}), (\ref{charact-3}) are all equivalent.

  Now, (\ref{charact-0}) implies (\ref{charact-4}) (Theorem \ref{sat-existence}). Also, (\ref{charact-4}) implies (\ref{charact-05}): Let $\mu$ be $H_1$-closed such that $\cf{\mu} = \chi$ and there is a saturated model of cardinality $\mu$. By Lemma \ref{closed-lem}, either $\K$ is stable in $\mu$ or stable in unboundedly many cardinals below $\mu$. In the latter case, Theorem \ref{sat-stable-unbounded} implies that $\K$ is stable in $\mu$. Thus $\K$ is stable in $\mu$, hence (\ref{charact-05}) holds.

  It remains to show that (\ref{charact-05}) implies (\ref{charact-0}). Let $\mu$ be an $H_1$-closed stability cardinal of cofinality $\chi$. By the proof of Lemma \ref{closed-lem}, $\K$ is stable in unboundedly many cardinals below $\mu$. By Theorem \ref{key-thm}, $\chi \in \uchi (\K)$, so (\ref{charact-0}) holds.
\end{proof}

\section{Indiscernibles and bounded equivalence relations}\label{indisc-seq}

We review here the main tools for the study of strong splitting in the next section: indiscernibles and bounded equivalence relations. All throughout, we assume:

\begin{hypothesis}
  $\K$ is an AEC with a monster model.
\end{hypothesis}

\begin{remark}
  By working more locally, the results and definitions of this section could be adapted to the amalgamation-less setup (see for example \cite[2.3]{categ-saturated-afml}).
\end{remark}

\begin{defin}[Indiscernibles, 4.1 in \cite{sh394}]
  Let $\alpha$ be a non-zero cardinal, $\theta$ be an infinite cardinal, and let $\seq{\ba_i : i < \theta}$ be a sequence of distinct elements each of length $\alpha$. Let $A$ be a set.

  \begin{enumerate}
  \item We say that \emph{$\seq{\ba_i : i < \theta}$ is indiscernible over $A$ in $N$} if for every $n < \omega$, every $i_0 < \ldots < i_{n - 1} < \theta$, $j_0 < \ldots < j_{n - 1} < \theta$, $\gtp (\ba_{i_0} \ldots \ba_{i_n} / A) = \gtp (\ba_{j_0} \ldots \ba_{j_n} / A)$. When $A = \emptyset$, we omit it and just say that $\seq{\ba_i : i < \theta}$ is indiscernible.
  \item We say that $\seq{\ba_i : i < \theta}$ is \emph{strictly indiscernible} if there exists an EM blueprint $\Phi$ (whose vocabulary is allowed to have arbitrary size) an automorphism $f$ of $\sea$ so that, letting $N' := \EM_{\tau (\K)} (\theta, \Phi)$:

    \begin{enumerate}
    \item For all $i < \theta$, $\bb_i := f (\ba_i) \in \fct{\alpha}{|N'|}$.
    \item If for $i < \theta$, $\bb_i = \seq{b_{i, j} : j < \alpha}$, then for all $j < \alpha$ there exists a unary $\tau (\Phi)$-function symbol $\rho_j$ such that for all $i < \theta$, $b_{i, j} = \rho_j^{N'} (i)$.
    \end{enumerate}
  \item Let $A$ be a set. We say that $\seq{\ba_i : i < \theta}$ is \emph{strictly indiscernible over $A$} if there exists an enumeration $\ba$ of $A$ such that $\seq{\ba_i \ba : i < \theta}$ is strictly indiscernible.
  \end{enumerate}
\end{defin}

Any strict indiscernible sequence extends to arbitrary lengths: this follows from a use of first-order compactness in the EM language. The converse is also true. This follows from the more general extraction theorem, essentially due to Morley:

\begin{fact}\label{indisc-extraction}
  Let $\BI := \seq{\ba_i : i < \theta}$ be distinct such that $\ell (\ba_i) = \alpha$ for all $i < \theta$. Let $A$ be a set. If $\theta \ge \hanf{\LS (\K) + |\alpha| + |A|}$, then there exists $\BJ := \seq{\bb_i : i < \omega}$ such that $\BJ$ is strictly indiscernible over $A$ and for any $n < \omega$ there exists $i_0 < \ldots < i_{n - 1} < \theta$ such that $\gtp (\bb_{0} \ldots \bb_{n - 1} / A) = \gtp (\ba_{i_0} \ldots \ba_{i_{n - 1}} / A)$.
\end{fact}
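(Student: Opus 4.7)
The plan is to combine Shelah's presentation theorem with the Erdős--Rado partition theorem and first-order compactness, essentially Morley's classical extraction argument adapted to the AEC context.

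First I would invoke Shelah's presentation theorem to write $\K$ as the class of $\tau(\K)$-reducts of models of some first-order theory $T_1$ in a vocabulary $\tau_1 \supseteq \tau(\K)$ with $|\tau_1| \le \LS(\K)$, omitting a set $\Gamma$ of $\tau_1$-types. Expand the monster model $\sea$ to a $\tau_1$-structure $\sea_1 \models T_1$ omitting $\Gamma$, and enlarge $\tau_1$ by Skolem functions so that the Skolem hull of any subset of $\sea_1$ is a $\tau_1$-elementary substructure whose $\tau(\K)$-reduct lies in $\K$. Enlarge $\tau_1$ further by naming each element of $A$ by a constant symbol; call the resulting vocabulary $\tau_1^A$. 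Set $\lambda := \LS(\K) + |\alpha| + |A|$, so $|\tau_1^A| \le \lambda$ and the number of complete $\tau_1^A$-types in any $n \cdot \alpha$ variables is at most $2^\lambda$.

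Next I apply the Erdős--Rado theorem. Since $\theta \ge \hanf{\lambda} = \beth_{(2^\lambda)^+}$, by iterating Erdős--Rado $\omega$-many times (thinning at each step and diagonalizing) one extracts from $\BI$ an infinite subsequence $\langle \bar a'_n : n < \omega\rangle$ whose finite subtuples all realize the same $\tau_1^A$-type as tuples of the same length and order. This subsequence is then $\tau_1^A$-indiscernible in $\sea_1$. Let $\Phi$ be the $\tau_1^A$-EM blueprint given by the common EM type of this subsequence; because Skolem functions are present, $\mathrm{EM}_{\tau_1^A}(I,\Phi)$ produces, for every linear order $I$, a model of $T_1$ omitting $\Gamma$ whose $\tau(\K)$-reduct lies in $\K$, and whose generators (together with the interpretations of the constants naming $A$) form a strictly indiscernible sequence over $A$ in the sense of the definition, with the unary function symbols $\rho_j$ being the Skolem terms that pick off the $j$-th coordinate of each generator. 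Taking $I = \omega$ and applying an automorphism of $\sea$ to transport the generators into the monster, I obtain the desired $\BJ$.

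Finally, the matching of Galois types is automatic: by the presentation theorem, two tuples with equal complete $\tau_1$-type (in $\sea_1$) have equal Galois type (in $\sea$), so the finite $\tau_1^A$-indiscernibility arranged by Erdős--Rado transfers to equality of the corresponding finite Galois types over $A$, and for each $n$ one witnesses $\gtp(\bb_0 \ldots \bb_{n-1}/A)$ by the appropriate Erdős--Rado subtuple $\ba_{i_0}\ldots \ba_{i_{n-1}}$ of $\BI$. The main obstacle I anticipate is bookkeeping: verifying that after expanding by Skolem functions and by constants naming $A$, the parameter $|\tau_1^A| \le \lambda$ still gives $2^\lambda$ as an upper bound for the number of colors, and that the $\rho_j$ built from the Skolem terms really satisfy the syntactic shape demanded in the definition of strictly indiscernible (in particular that each $\rho_j$ depends only on a single index $i$, which is ensured by taking the $\rho_j$ to apply to individual generators of the EM model rather than to tuples of them).
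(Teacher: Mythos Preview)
The paper does not prove this statement: it is recorded as a \emph{Fact} and attributed in the surrounding text as ``essentially due to Morley,'' with no argument given. So there is nothing to compare your proposal against on the paper's side.

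Your sketch is the standard and correct route: presentation theorem, Skolemize, name $A$ by constants, run Morley's Erd\H{o}s--Rado extraction in the expanded first-order language, and read off an EM blueprint. Two small points worth tightening. First, the step ``equal complete $\tau_1$-type implies equal Galois type'' deserves one more sentence: it holds because the Skolem hulls of the two tuples (over the constants naming $A$) are isomorphic $\tau_1$-structures, hence their $\tau(\K)$-reducts are isomorphic members of $\K$ over $A$, and amalgamation in the monster upgrades this to an automorphism. Second, to match the paper's definition of \emph{strictly indiscernible} literally, you should explicitly add fresh unary function symbols $\rho_j$ (for $j<\alpha$) to $\tau(\Phi)$ and let the blueprint declare $\rho_j(i)$ to be the $j$th coordinate of the $i$th extracted tuple; this is exactly the bookkeeping you flag at the end, and it goes through without difficulty since $|\alpha| \le \lambda$ keeps the vocabulary size under control.
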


\begin{fact}\label{strict-indisc-charact}
  Let$\seq{\ba_i : i < \theta}$ be indiscernible over $A$, with $\ell (\ba_i) = \alpha$ for all $i < \theta$. The following are equivalent:

  \begin{enumerate}
  \item\label{strict-equiv-1} For any infinite cardinal $\lambda$, there exists $\seq{\bb_i : i < \lambda}$ that is indiscernible over $A$ and such that $\bb_i = \ba_i$ for all $i < \theta$.
  \item\label{strict-equiv-2} For all infinite $\lambda < \hanf{\theta + |A| + |\alpha| + \LS (\K)}$ (recall Notation \ref{basic-notation}), there exists $\seq{\bb_i : i < \lambda}$ as in (\ref{strict-equiv-1}).
  \item\label{strict-equiv-3} $\seq{\ba_i : i < \theta}$ is strictly indiscernible over $A$.
  \end{enumerate}
\end{fact}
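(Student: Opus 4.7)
The plan is to prove the cycle $(1) \Rightarrow (2) \Rightarrow (3) \Rightarrow (1)$. The implication $(1) \Rightarrow (2)$ is a trivial restriction of the range of $\lambda$.

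For $(3) \Rightarrow (1)$, I would use the blueprint $\Phi$ and automorphism $f$ witnessing strict indiscernibility of $\BI = \seq{\ba_i : i < \theta}$ over $A$. For any infinite $\lambda \ge \theta$, $\Phi$ generates $N'' := \EM_{\tau(\K)}(\lambda, \Phi)$ as an extension of $N' = \EM_{\tau(\K)}(\theta, \Phi)$, and its canonical sequence $\seq{(\rho_j^{N''}(i))_{j < \alpha} : i < \lambda}$ is indiscernible (a standard property of EM blueprints) and extends the one associated with $\BI$ via $f$. Pulling back through $f^{-1}$ produces an indiscernible of length $\lambda$ over $A$ extending $\BI$, which is $(1)$.

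For $(2) \Rightarrow (3)$, the substantive direction, I would first reduce to $A = \emptyset$: by definition, strict indiscernibility of $\BI$ over $A$ is strict indiscernibility of the enlarged sequence $\seq{\ba_i \ba : i < \theta}$ over $\emptyset$, for some enumeration $\ba$ of $A$; the extension property in $(2)$ transfers in the same way. Now with $A = \emptyset$, I would pick $\lambda$ with $\hanf{|\alpha| + \LS(\K)} \le \lambda < \hanf{\theta + |\alpha| + \LS(\K)}$, apply $(2)$ to extend $\BI$ to an indiscernible $\seq{\ba_i : i < \lambda}$, then apply Fact \ref{indisc-extraction} to extract a strictly indiscernible $\BJ = \seq{\bb_i : i < \omega}$ with EM blueprint $\Phi$, whose finite-subsequence types over $\emptyset$ match those of $\seq{\ba_i : i < \lambda}$, and hence of $\BI$ by indiscernibility. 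Generating $\EM_{\tau(\K)}(\theta, \Phi)$ then yields a canonical $\theta$-long strict indiscernible $\seq{\tilde{\bb}_i : i < \theta}$ whose finite-subsequence types again agree with those of $\BI$.

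The main obstacle is upgrading this finite-level agreement to equality of Galois types as full $\theta$-sequences, i.e., producing an automorphism $g$ of $\sea$ with $g(\ba_i) = \tilde{\bb}_i$ for every $i < \theta$; once $g$ is in hand, $g$ and $\Phi$ together witness that $\BI$ is strictly indiscernible. I would handle this by a back-and-forth construction of an increasing chain $\seq{g_i : i \le \theta}$ of partial isomorphisms of $\sea$ with $g_i$ sending $\seq{\ba_j : j < i}$ to $\seq{\tilde{\bb}_j : j < i}$. The essential input at successor steps is that both $\BI$ and $\seq{\tilde{\bb}_i : i < \theta}$ are indiscernible and both remain extendable to longer indiscernibles (the former by $(2)$, the latter by enlarging the index set of $\Phi$); this ``uniqueness of indiscernible extension'' forces the Galois type of $\ba_i$ over $\{\ba_j : j < i\}$ to correspond, under $g_i$, to the Galois type of $\tilde{\bb}_i$ over $\{\tilde{\bb}_j : j < i\}$, after which model-homogeneity of $\sea$ permits extending $g_i$ to $g_{i+1}$. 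Limit stages are handled by taking unions and again extending through homogeneity. Setting $g := g_\theta$ yields the required automorphism, completing the proof.
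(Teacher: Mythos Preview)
The paper states this as a \emph{Fact} and does not supply a proof; it is quoted as a known result (essentially going back to Shelah's work on indiscernibles in AECs). So there is no in-paper argument to compare against, and I will simply assess your sketch on its own.

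Your treatment of $(1)\Rightarrow(2)$ and $(3)\Rightarrow(1)$ is fine. The substantive gap is in $(2)\Rightarrow(3)$, specifically the back-and-forth that is supposed to produce an automorphism $g$ of $\sea$ with $g(\ba_i)=\tilde{\bb}_i$ for all $i<\theta$. Two problems:

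\begin{itemize}
\item At a limit stage $\delta\le\theta$, you take the union of the partial maps $g_i$ and then appeal to homogeneity of $\sea$ to extend. But in an arbitrary AEC, the statement ``$\gtp(\seq{\ba_j:j<i}/\emptyset)=\gtp(\seq{\tilde{\bb}_j:j<i}/\emptyset)$ for every $i<\delta$'' does \emph{not} entail ``$\gtp(\seq{\ba_j:j<\delta}/\emptyset)=\gtp(\seq{\tilde{\bb}_j:j<\delta}/\emptyset)$''. That implication is precisely a locality property for (long) Galois types which can fail without tameness-type hypotheses. Consequently the union of your $g_i$'s need not extend to an automorphism of $\sea$, and the induction breaks down at the first limit ordinal.
\item Even at successor stages past $\omega$, your justification invokes a ``uniqueness of indiscernible extension'' principle which you have not established and which does not hold in general; there can be many pairwise inequivalent ways to continue an indiscernible sequence by one more element.
\end{itemize}

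The usual route avoids this back-and-forth entirely. One extends $\BI$ to a long indiscernible $\seq{\ba_i:i<\lambda}$ using $(2)$, passes to a first-order expansion via Shelah's presentation theorem, Skolemizes, and then builds the EM blueprint $\Phi$ directly so that the original $\theta$-block sits inside $\EM_{\tau(\K)}(\theta,\Phi)$ as the canonical generators; no after-the-fact matching of two sequences is needed. If you want to repair your approach, you should replace the set-level back-and-forth by a construction that at each stage works with \emph{models} (so that the chain axioms of AECs guarantee the limit step), or else argue via the presentation theorem as above.
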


We want to study bounded equivalence relations: they are the analog of Shelah's finite equivalence relations from the first-order setup but here the failure of compactness compels us to only ask for the number of classes to be bounded (i.e.\ a cardinal). The definition for homogeneous model theory appears in \cite[1.4]{hs-independence}. 

\begin{defin}
  Let $\alpha$ be a non-zero cardinal and let $A$ be a set. An \emph{$\alpha$-ary invariant equivalence relation on $A$} is an equivalence relation $E$ on $\fct{\alpha}{\sea}$ such that for any automorphism $f$ of $\sea$ fixing $A$, $\bb E \bc$ if and only if $f (\bb) E f (\bc)$.
\end{defin}
\begin{defin}
  Let $\alpha$ be a non-zero cardinal, $A$ be a set, and $E$ be an $\alpha$-ary invariant equivalence relation on $A$.

  \begin{enumerate}
    \item Let $c (E)$ be the number of equivalence classes of $E$.
    \item We say that $E$ is \emph{bounded} if $c (E) < \infty$ (i.e.\ it is a cardinal).
    \item Let $\SE^{\alpha} (A)$ be the set of $\alpha$-ary bounded invariant equivalence relations over $A$ ($S$ stands for strong).
  \end{enumerate}
\end{defin}

\begin{remark}\label{se-size}
  $$
  |\SE^{\alpha} (A)| \le |2^{\gS^{\alpha + \alpha} (A)}| \le 2^{2^{|A| + \LS (\K) + \alpha}}
  $$
\end{remark}

The next two results appear for homogeneous model theory in \cite[\S1]{hs-independence}. The main difference here is that strictly indiscernible and indiscernibles need not coincide.

\begin{lem}\label{n-equiv-classes-indisc}
  Let $E \in \SE^\alpha (A)$. Let $\BI$ be strictly indiscernible over $A$. For any $\ba, \bb \in \BI$, we have that $\ba E \bb$.
\end{lem}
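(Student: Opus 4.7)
The plan is to leverage the unboundedness in Fact~\ref{strict-indisc-charact}: strict indiscernibility over $A$ lets us stretch $\BI$ to an $A$-indiscernible sequence of any desired length, and once the length exceeds $c(E)$, pigeonhole will force some two terms into the same $E$-class. Galois-invariance of $E$ then promotes this single instance into a statement about \emph{every} increasing pair, and transitivity finishes the job.

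First I would fix $\lambda := c(E)^+$ and, using Fact~\ref{strict-indisc-charact}((\ref{strict-equiv-1})), pick $\BJ = \seq{\bb_i : i < \lambda}$ that is indiscernible over $A$ and extends $\BI$ (so in particular $\bb_i = \ba_i$ for $i$ in the index set of $\BI$, whenever $\BI$ was initially listed as $\seq{\ba_i : i < \theta}$). Since $E$ is an equivalence relation on $\fct{\alpha}{\sea}$ with fewer than $\lambda$ classes, there exist $i^\ast < j^\ast < \lambda$ such that $\bb_{i^\ast} \, E \, \bb_{j^\ast}$.

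Next I would use indiscernibility to propagate: for any $i < j < \lambda$, the equality $\gtp(\bb_{i^\ast} \bb_{j^\ast} / A) = \gtp(\bb_i \bb_j / A)$ yields an automorphism $f$ of $\sea$ fixing $A$ with $f(\bb_{i^\ast}) = \bb_i$ and $f(\bb_{j^\ast}) = \bb_j$. Because $E$ is a Galois equivalence relation on $A$, it is preserved by $f$, hence $\bb_i \, E \, \bb_j$. Thus every strictly increasing pair in $\BJ$ is $E$-equivalent, and by transitivity all terms of $\BJ$ lie in a single $E$-class. In particular, any $\ba, \bb \in \BI \subseteq \BJ$ satisfy $\ba \, E \, \bb$.

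I expect no substantive obstacle: the only subtle point is that in the AEC setting we do not have compactness, so indiscernibility alone would not let us extend the sequence — we genuinely need the ``strict'' hypothesis to invoke Fact~\ref{strict-indisc-charact} and produce an $A$-indiscernible sequence of length $c(E)^+$. Once that extension is in hand, the Galois-invariance of $E$ (which is exactly the clause in the definition of an $\alpha$-ary Galois equivalence relation on $A$) and transitivity do all the remaining work.
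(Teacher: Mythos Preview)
Your argument is correct and is essentially the same as the paper's: both extend $\BI$ to arbitrary length via Fact~\ref{strict-indisc-charact} and use the Galois-invariance of $E$ to propagate (in)equivalence across all increasing pairs. The only cosmetic difference is that the paper argues by contradiction (assuming $\neg(\ba E \bb)$ and deducing $c(E) \ge \lambda$ for every $\lambda$), while you go directly via pigeonhole at $\lambda = c(E)^+$; these are contrapositives of one another.
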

\begin{proof}
  Suppose not, say $\neg (\ba  E \bb)$. Fix any infinite cardinal $\lambda \ge |\BJ|$. By Theorem \ref{strict-indisc-charact}, $\BI$ extends to a strictly indiscernible sequence $\BJ$ over $A$ of cardinality $\lambda$. Thus $c (E) \ge \lambda$. Since $\lambda$ was arbitrary, this contradicts the fact that $E$ was bounded.
\end{proof}

\begin{lem}\label{bounded-equiv-size}
  Let $A$ be a set and $\alpha$ be a non-zero cardinal. Let $E$ be an $\alpha$-ary invariant equivalence relation over $A$. The following are equivalent:

  \begin{enumerate}
  \item $E$ is bounded.
  \item $c (E) < \hanf{|A| + \alpha + \LS (\K)}$.
  \end{enumerate}
\end{lem}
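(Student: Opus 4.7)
The reverse implication is immediate from the definition: if $c(E) < \hanf{|A| + \alpha + \LS(\K)}$ then in particular $c(E)$ is a cardinal, so $E$ is bounded.

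For the forward direction I plan to argue by contradiction. Assume $E$ is bounded but $c(E) \ge \theta := \hanf{|A| + \alpha + \LS(\K)}$. Choose representatives $\seq{\ba_i : i < \theta}$ of $\theta$-many distinct $E$-classes; in particular for any $i < j < \theta$ we have $\neg(\ba_i \mathrel{E} \ba_j)$. The cardinal $\theta$ is exactly the Hanf number threshold needed to apply the extraction theorem (Fact \ref{indisc-extraction}) to this sequence over the parameter set $A$, producing a sequence $\BJ = \seq{\bb_i : i < \omega}$ that is strictly indiscernible over $A$ and such that for every finite $n$ (in particular $n = 2$) there exist $i_0 < \ldots < i_{n-1} < \theta$ with $\gtp(\bb_0 \ldots \bb_{n-1} / A) = \gtp(\ba_{i_0} \ldots \ba_{i_{n-1}} / A)$.

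Now Lemma \ref{n-equiv-classes-indisc}, which applies because $E$ is assumed bounded, says that any two elements of a strictly indiscernible sequence over $A$ are $E$-equivalent; in particular $\bb_0 \mathrel{E} \bb_1$. Specializing the extraction conclusion to $n = 2$, pick $i_0 < i_1 < \theta$ with $\gtp(\bb_0 \bb_1 / A) = \gtp(\ba_{i_0} \ba_{i_1} / A)$, witnessed by an automorphism $f$ of $\sea$ fixing $A$ with $f(\bb_0 \bb_1) = \ba_{i_0} \ba_{i_1}$. By Galois invariance of $E$ over $A$, from $\bb_0 \mathrel{E} \bb_1$ we get $f(\bb_0) \mathrel{E} f(\bb_1)$, i.e.\ $\ba_{i_0} \mathrel{E} \ba_{i_1}$, contradicting the choice of the $\ba_i$'s as representatives of distinct classes.

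There is no serious obstacle: the two pieces of machinery are already in place, and the only content of the proof is matching the Hanf-number bound in the statement with the threshold required by Fact \ref{indisc-extraction}, which happens to be the same cardinal. The minor subtlety worth noting is that we only need the extraction clause for $n = 2$; the full strength of strict indiscernibility is used via Lemma \ref{n-equiv-classes-indisc}, which in turn leans on the extension property of Fact \ref{strict-indisc-charact}.
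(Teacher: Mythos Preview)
Your proof is correct and follows essentially the same approach as the paper: extract a strictly indiscernible sequence from the $\theta$-many inequivalent representatives via Fact \ref{indisc-extraction}, then invoke Lemma \ref{n-equiv-classes-indisc} together with Galois invariance of $E$ to obtain a contradiction. The only cosmetic difference is that the paper argues the contrapositive directly (from $c(E)\ge\theta$ deduce $\neg(\bb_i E \bb_j)$ and hence unboundedness), whereas you assume boundedness and push the equivalence $\bb_0 E \bb_1$ back to the $\ba$'s; the logical content is identical.
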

\begin{proof}
  Let $\theta := \hanf{|A| + \alpha + \LS (\K)}$.
  If $c (E) < \theta$, $E$ is bounded. Conversely if $c (E) \ge \theta$ then we can list $\theta$ non-equivalent elements as $\BI := \seq{\ba_i : i < \theta}$. By Fact \ref{indisc-extraction}, there exists a strictly indiscernible sequence over $A$ $\seq{\bb_i : i < \omega}$ reflecting some of the structure of $\BI$. In particular, for $i < j < \omega$, $\neg (\bb_i E \bb_j)$. By Lemma \ref{n-equiv-classes-indisc}, $E$ cannot be bounded.
\end{proof}

The following equivalence relation will play an important role (see \cite[4.7]{hs-independence}):

\begin{defin}
  For all $A$ and $\alpha$, let $\Emin{A} := \bigcap \SE^\alpha (A)$.
\end{defin}

By Remark \ref{se-size} and a straightforward counting argument, we have that $\Emin{A} \in \SE^\alpha (A)$.

\section{Strong splitting}

We study the AEC analog of first-order strong splitting. It was introduced by Shelah in \cite[4.11]{sh394}. In the next section, the analog of first-order dividing will be studied. Shelah also introduced it \cite[4.8]{sh394} and showed how to connect it with strong splitting. After developing enough machinery, we will be able to connect Shelah's results on the locality cardinals for dividing \cite[5.5]{sh394} to the locality cardinals for splitting.

All throughout this section, we assume:

\begin{hypothesis}
  $\K$ is an AEC with a monster model.
\end{hypothesis}

\begin{defin}
  Let $\mu$ be an infinite cardinal, $A \subseteq B$, $p \in \gS (B)$. We say that $p$ \emph{$(<\mu)$-strongly splits over $A$} if there exists a strictly indiscernible sequence $\seq{\ba_i : i < \omega}$ over $A$ with $\ell (\ba_i) < \mu$ for all $i < \omega$ such that for any $b$ realizing $p$, $\gtp (b \ba_0 / A) \neq \gtp (b \ba_1 / A)$. We say that $p$ \emph{explicitly $(<\mu)$-strongly splits over $A$} if the above holds with $\ba_0 \ba_1 \in \fct{<\mu}{B}$. 

  $\mu$-strongly splits means $(\le \mu)$-strongly splits, which has the expected meaning. 
\end{defin}

\begin{remark}
  For $\mu < \mu'$, if $p$ [explicitly] $(<\mu)$-strongly splits over $A$, then $p$ [explicitly] $(<\mu')$-strongly splits over $A$. 
\end{remark}

\begin{lem}[Base monotonicity of strong splitting]\label{ss-base-monot}
  Let $A \subseteq B \subseteq C$ and let $p \in \gS (C)$. Let $\mu > |B \backslash A|$ be infinite. If $p$ $(<\mu)$-strongly splits over $B$, then $p$ $(<\mu)$-strongly splits over $A$.
\end{lem}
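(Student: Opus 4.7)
The natural approach is to absorb the extra parameters from $B \setminus A$ into the indiscernible sequence itself. Let $\seq{\ba_i : i < \omega}$ be a strictly indiscernible sequence over $B$ with $\ell(\ba_i) < \mu$ witnessing the $(<\mu)$-strong splitting of $p$ over $B$. Fix an enumeration $\bar{c}$ of $B \setminus A$ (so $\ell(\bar{c}) = |B \setminus A| < \mu$) and an enumeration $\bar{d}$ of $A$. The plan is to show that the sequence $\seq{\ba_i \bar{c} : i < \omega}$ is strictly indiscernible over $A$, has terms of length $< \mu$, and still witnesses splitting over $A$.

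For the length condition, since $\mu$ is infinite and both $\ell(\ba_i) < \mu$ and $|B \setminus A| < \mu$, we have $\ell(\ba_i \bar{c}) < \mu$. For strict indiscernibility over $A$, the hypothesis gives us (unpacking the definition of strict indiscernibility over $B$) that $\seq{\ba_i \bar{d} \bar{c} : i < \omega}$ is strictly indiscernible. A reordering of the coordinates within each term preserves strict indiscernibility, since in the witnessing EM blueprint this just amounts to permuting the unary function symbols $\rho_j$. Hence $\seq{(\ba_i \bar{c}) \bar{d} : i < \omega}$ is strictly indiscernible, which by definition (using $\bar{d}$ as the enumeration of $A$) means that $\seq{\ba_i \bar{c} : i < \omega}$ is strictly indiscernible over $A$.

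For the splitting condition, I would observe that for any $b$, the Galois types $\gtp(b \ba_0 \bar{c} / A)$ and $\gtp(b \ba_1 \bar{c} / A)$ are equal if and only if $\gtp(b \ba_0 / B) = \gtp(b \ba_1 / B)$. Indeed, an automorphism of $\sea$ that fixes $A$ and sends $b \ba_0 \bar{c}$ to $b \ba_1 \bar{c}$ must fix each element of $\bar{c}$, hence fixes $B$; conversely any automorphism fixing $B$ already fixes $A$ and $\bar{c}$ pointwise. Therefore, for any $b$ realizing $p$, the inequality $\gtp(b \ba_0 / B) \neq \gtp(b \ba_1 / B)$ guaranteed by the hypothesis transfers to $\gtp(b \ba_0 \bar{c} / A) \neq \gtp(b \ba_1 \bar{c} / A)$. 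This shows $p$ $(<\mu)$-strongly splits over $A$, as desired.

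The only mildly subtle point is the coordinate-permutation argument for strict indiscernibility, but this is immediate from the EM-blueprint formulation. Everything else is essentially unpacking definitions, so I do not expect a genuine obstacle.
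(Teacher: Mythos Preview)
Your proof is correct and follows essentially the same approach as the paper: enumerate $B \setminus A$ as $\bar{c}$, absorb it into the indiscernible sequence as $\seq{\ba_i \bar{c} : i < \omega}$, and observe that the splitting condition over $B$ translates to the splitting condition over $A$ via the equivalence $\gtp(b \ba_\ell \bar{c} / A) = \gtp(b \ba_\ell / B)$. You supply more detail than the paper does on why strict indiscernibility over $A$ holds (the coordinate-permutation argument in the EM blueprint), which the paper simply asserts.
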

\begin{proof}
  Let $\seq{\ba_i : i < \omega}$ witness the strong splitting over $B$. Let $\bc$ be an enumeration of $B \backslash A$. The sequence $\seq{\ba_i \bc : i < \omega}$ is strictly indiscernible over $A$. Moreover, for any $b$ realizing $p$, $\gtp (b \bc \ba_0 / A) \neq \gtp (b \bc \ba_1 / A)$ if and only if $\gtp (b \ba_0 / A \bc) \neq \gtp (b \ba_1 / A \bc)$ if and only if $\gtp (b \ba_0 / B) \neq \gtp (b \ba_1 / B)$, which holds by the strong splitting assumption.
\end{proof}

Lemma \ref{ss-base-monot} motivates the following definition:

\begin{defin}\label{isssp-def}
  For $\lambda \ge \LS (\K)$), we let $\isssp{\lambda} (\K_\lambda)$ be the independence relation whose underlying class is $\K'$ and whose independence relation is non $\lambda$-strong-splitting.
\end{defin}

Next, we state a key characterization lemma for strong splitting in terms of bounded equivalence relations. This is used in the proof of the next result, a kind of uniqueness of the non-strong-splitting extension. It appears already for homogeneous model theory in \cite[1.12]{hs-independence}. 

\begin{defin}\label{sat-def}
  Let $N \in \K$, $A \subseteq |N|$, and $\mu$ be an infinite cardinal. We say that $N$ is \emph{$\mu$-saturated over $A$} if any type in $\gS^{<\mu} (A)$ is realized in $N$.
\end{defin}

\begin{lem}\label{weak-uq-lem-0}
  Let $N \in \K$ and let $A \subseteq |N|$. Assume that $N$ is $(\aleph_1 + \mu)$-saturated over $A$. Let $p := \gtp (b / N)$. The following are equivalent.

  \begin{enumerate}
  \item\label{weak-uq-0} $p$ does not \emph{explicitly} $(<\mu)$-strongly split over $A$.
  \item\label{weak-uq-1} $p$ does not $(<\mu)$-strongly split over $A$.
  \item\label{weak-uq-2} For all $\alpha < \mu$, all $\bc$, $\bd$ in $\fct{\alpha}{|N|}$, $\bc \Emin{A} \bd$ implies $\gtp (b \bc / A) = \gtp (b \bd / A)$.
  \end{enumerate}
\end{lem}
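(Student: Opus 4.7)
The equivalences will be proved via the cycle (2) $\Rightarrow$ (1), (3) $\Rightarrow$ (1), (1) $\Rightarrow$ (3), and (3) $\Rightarrow$ (2). The first is immediate since explicit $(<\mu)$-strong splitting just restricts the witnessing sequence to live in $|N|$. For (3) $\Rightarrow$ (1), any strictly indiscernible sequence $\seq{\ba_i : i < \omega}$ over $A$ in $\fct{<\mu}{|N|}$ has $\ba_0 \Emin{A} \ba_1$ by Lemma \ref{n-equiv-classes-indisc}, and (3) then forces $\gtp(b\ba_0/A) = \gtp(b\ba_1/A)$, preventing explicit strong splitting.

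The heart of the lemma is (1) $\Rightarrow$ (3), proved by contrapositive: assume $\bc, \bd \in \fct{\alpha}{|N|}$ with $\alpha < \mu$, $\bc \Emin{A} \bd$, yet $\gtp(b\bc/A) \neq \gtp(b\bd/A)$. The plan is to produce a bounded, $A$-invariant Galois equivalence relation $E^\star \in \SE^\alpha (A)$ (which therefore contains $\Emin{A}$) such that, assuming (1), any pair from $\fct{\alpha}{|N|}$ related by $E^\star$ shares a single value of $\gtp(b\cdot/A)$. On $\fct{\alpha}{|N|}$, declare $\bc' \sim_1 \bd'$ iff both lie on a common strictly indiscernible sequence over $A$ in $\fct{\alpha}{|N|}$; let $E_N$ be the transitive closure. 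Extend to $E^\star$ on $\fct{\alpha}{\sea}$ by setting $\bc^* E^\star \bd^*$ iff for any realization $\bc', \bd' \in \fct{\alpha}{|N|}$ of $\gtp(\bc^*\bd^*/A)$ one has $\bc' E_N \bd'$. The $(\aleph_1 + \mu)$-saturation of $N$ over $A$ (using $2\alpha < \aleph_1 + \mu$) ensures such realizations exist, that the choice is immaterial (so $E^\star$ is well-defined), and it makes transitivity reduce to that of $E_N$ via a joint realization of $\gtp(\bc^*\bd^*\be^*/A)$ in $|N|$. Boundedness of $E^\star$ follows since the number of $E_N$-classes is at most $\|N\|^\alpha$, so $E^\star \in \SE^\alpha (A)$. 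Under (1), a single $\sim_1$-step must preserve $\gtp(b\cdot/A)$ — else the witnessing sequence would witness explicit strong splitting — and transitivity of equality propagates this to $E_N$. Since $\bc, \bd \in \fct{\alpha}{|N|}$ and $\Emin{A} \subseteq E^\star$, the pair $(\bc, \bd)$ is $E^\star$-related, hence $E_N$-related, forcing $\gtp(b\bc/A) = \gtp(b\bd/A)$, the desired contradiction.

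For (3) $\Rightarrow$ (2), given a putative strong-splitting witness $\seq{\ba_i : i < \omega}$, first extend it to arbitrary length via Fact \ref{strict-indisc-charact} and extract by Fact \ref{indisc-extraction} a subsequence $\seq{\bc_k : k < \omega}$ strictly indiscernible over $Ab$. The equivalence relation $E_b$ given by $\bc E_b \bd$ iff $\gtp(b\bc/A) = \gtp(b\bd/A)$ is a bounded Galois equivalence relation over $Ab$, so Lemma \ref{n-equiv-classes-indisc} applied over $Ab$ yields $\gtp(b\bc_0/A) = \gtp(b\bc_1/A)$; by the extraction this propagates back to $\gtp(b\ba_{i_0}/A) = \gtp(b\ba_{i_1}/A)$ for some $i_0 < i_1$. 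A concluding transfer step leverages $A$-indiscernibility of $\seq{\ba_i}$ together with $(\aleph_1 + \mu)$-saturation of $N$ over $A$: realizing $\gtp(\ba_0\ba_1/A)$ inside $|N|$ as $(\bc, \bd)$ and invoking (3) on this realization (together with a choice of an $A$-automorphism sending $(\bc, \bd)$ to $(\ba_0, \ba_1)$ and tracking the image of $b$) produces a realization $b^*$ of $p$ for which $\gtp(b^*\ba_0/A) = \gtp(b^*\ba_1/A)$, contradicting the strong-splitting hypothesis.

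The main obstacle is (1) $\Rightarrow$ (3): verifying that $E^\star$ is a bounded $A$-invariant Galois equivalence relation — in particular transitivity and well-definedness, both handled by joint-realization arguments enabled by the $(\aleph_1 + \mu)$-saturation of $N$ over $A$.
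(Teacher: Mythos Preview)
Your argument for $(1)\Rightarrow(3)$ has a real gap in the construction of $E^\star$. You claim the choice of realization is immaterial, i.e.\ that if $(\bc',\bd')$ and $(\bc'',\bd'')$ in $\fct{2\alpha}{|N|}$ both realize $\gtp(\bc^*\bd^*/A)$ then $\bc' E_N \bd' \Longleftrightarrow \bc'' E_N \bd''$. But the automorphism of $\sea$ sending $(\bc',\bd')$ to $(\bc'',\bd'')$ fixes only $A$, not $N$, so it need not carry a chain of strictly indiscernible sequences \emph{lying in $N$} to another chain lying in $N$. Saturation of $N$ over $A$ does not help either: to move a witnessing $E_N$-chain for $(\bc',\bd')$ to one for $(\bc'',\bd'')$ you would have to realize a type over $A\bc''\bd''$, which is more than the hypothesis gives. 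Consequently, with the ``for all'' reading $E^\star$ need not be reflexive, and with the ``for some'' reading your transitivity argument breaks (the joint realization $(\bc''',\bd''',\be''')$ gives $\bc''' E_N \bd'''$ only if well-definedness already holds). The paper sidesteps this entirely by defining the relation $E$ directly on $\fct{\alpha}{|\sea|}$ --- $\bb_0 E \bb_1$ iff they are linked by a finite chain of strictly indiscernible sequences over $A$ in the monster --- so $A$-invariance is automatic and boundedness follows by one extraction; only then is the witnessing chain for $\bc E \bd$ pushed into $N$ by a single application of saturation.

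Your $(3)\Rightarrow(2)$ also fails at the last step and is in any case far more elaborate than necessary. You produce $b^\ast := f(b)$ for an $A$-automorphism $f$ and assert $b^\ast$ realizes $p=\gtp(b/N)$; but $f$ does not fix $N$, so $b^\ast$ only shares the type of $b$ over $A$, and you cannot contradict the strong-splitting clause, which quantifies over all realizations of $p$. The paper's argument here is one line: if $\seq{\ba_i:i<\omega}$ witnesses $(<\mu)$-strong splitting then $\ba_0 \Emin{A} \ba_1$ by Lemma~\ref{n-equiv-classes-indisc}, while $\gtp(b\ba_0/A)\neq\gtp(b\ba_1/A)$, contradicting (3). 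The detour through extraction over $Ab$ and the relation $E_b$ is unnecessary.
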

\begin{proof}
  If $p$ explicitly $(<\mu)$-strongly splits over $A$, then $p$ $(<\mu)$-strongly splits over $A$. Thus (\ref{weak-uq-1}) implies (\ref{weak-uq-0}).
  
  If $p$ $(<\mu)$-splits strongly over $A$, let $\BI = \seq{\ba_i : i < \omega}$ witness it, with $\ba_i \in \fct{\alpha}{|\sea|}$ for all $i < \omega$. By Lemma \ref{n-equiv-classes-indisc}, $\ba_0 \Emin{A} \ba_1$. However by the strong splitting assumption $\gtp (b \ba_0 / A) \neq \gtp (b \ba_1 / A)$. This proves (\ref{weak-uq-2}) implies (\ref{weak-uq-1}).

  It remains to show that (\ref{weak-uq-0}) implies (\ref{weak-uq-2}). Assume (\ref{weak-uq-0}). Assume $\bc$, $\bd$ are in $\fct{\alpha}{|N|}$ such that $\bc \Emin{A} \bd$. Define an equivalence relation $E$ on $\fct{\alpha}{|\sea|}$ as follows: $\bb_0 E \bb_1$ if and only if $\bb_0 = \bb_1$ or there exists $n < \omega$ and $\seq{\BI_i : i < n}$ strictly indiscernible over $A$ such that $\bb_0 \in I_0$, $\bb_1 \in I_{n - 1}$ and for all $i < n - 1$, $\BI_i \cap \BI_{i + 1} \neq \emptyset$ (this idea already appears in the proof of \cite[I.1.11(3)]{shelahfobook}; sometimes $n$ is called the \emph{Kim-Pillay distance} between $\bb_0$ and $\bb_1$). $E$ is an invariant equivalence relation over $A$. Moreover if $\seq{\ba_i : i < \theta}$ are in different equivalence classes and $\theta$ is sufficiently big, we can extract a strictly indiscernible sequence from it which will witness that all elements are actually in the same class. Therefore $E \in \SE^\alpha (A)$.

  Since $\bc \Emin{A} \bd$, we have that $\bc E \bd$ and without loss of generality $\bc \neq \bd$. Let $\seq{\BI_i : i < n}$ witness the finite Kim-Pillay distance. By saturation, we can assume without loss of generality that $\BI_i$ is in $|M|$ for all $i < n$. Now use the failure of explicit strong splitting to argue that $\gtp (b \bc / A) = \gtp (b \bd / A)$.
\end{proof}

\begin{lem}[Toward uniqueness of non strong splitting]\label{weak-uq-ss}
  Let $M \lea N$ and let $A \subseteq |M|$. Assume that $N$ is $(\aleph_1 +\mu)$-saturated over $A$ and for every $\alpha < \mu$, $\bc \in \fct{\alpha}{|N|}$, there is $\bd \in \fct{\alpha}{|M|}$ such that $\bd \Emin{A} \bc$. 

  Let $p, q \in \gS (N)$ not $(<\mu)$-strongly split over $A$. If $p \rest M = q \rest M$, then $p \rest B = q \rest B$ for every $B \subseteq |N|$ with $|B| < \mu$.
\end{lem}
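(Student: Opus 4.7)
The approach is to move the question from $B \subseteq |N|$ to a parameter tuple living inside $M$ via the equivalence relation $\Emin{A}$, and then exploit $p \rest M = q \rest M$. First I would fix $B \subseteq |N|$ with $|B| < \mu$, set $\alpha := |B|$, and enumerate $B$ as $\bc \in \fct{\alpha}{|N|}$. The second hypothesis then provides $\bd \in \fct{\alpha}{|M|}$ with $\bd \Emin{A} \bc$. I would also fix realizations $b \models p$ and $b' \models q$.

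The core of the argument is a double application of Lemma \ref{weak-uq-lem-0}. Since $p$ does not $(<\mu)$-strongly split over $A$ and $N$ is $(\aleph_1 + \mu)$-saturated over $A$, clause (\ref{weak-uq-2}) of that lemma, applied to $\bc, \bd \in \fct{\alpha}{|N|}$ with $\bc \Emin{A} \bd$, yields $\gtp(b \bc / A) = \gtp(b \bd / A)$. The identical argument run with $q$ gives $\gtp(b' \bc / A) = \gtp(b' \bd / A)$.

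To chain these equalities, I would use the fact that $A \cup \bd \subseteq |M|$, so $p \rest M = q \rest M$ forces $\gtp(b / A \cup \bd) = \gtp(b' / A \cup \bd)$ by monotonicity. This is the same assertion as $\gtp(b \bd / A) = \gtp(b' \bd / A)$, since any automorphism of $\sea$ fixing $A \cup \bd$ and sending $b \mapsto b'$ automatically sends the tuple $b\bd$ to $b'\bd$ while fixing $A$. Concatenating the three type equalities produces $\gtp(b \bc / A) = \gtp(b' \bc / A)$, which by monotonicity is exactly $p \rest B = q \rest B$.

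I do not anticipate a real obstacle: the only point that needs to be checked carefully is that the saturation hypothesis placed on $N$ in the statement is precisely the hypothesis Lemma \ref{weak-uq-lem-0} demands in order to conclude (\ref{weak-uq-2}) from (\ref{weak-uq-1}), and both ask for $(\aleph_1 + \mu)$-saturation of $N$ over $A$. The second hypothesis (existence of an $\Emin{A}$-equivalent $\bd$ inside $M$ for every short tuple $\bc$ from $N$) is exactly what lets the equivalence relation argument cross from $N$ back into $M$, which is the only non-formal move in the proof.
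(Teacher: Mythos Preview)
Your proposal is correct and follows essentially the same route as the paper's proof: enumerate $B$ as a tuple $\bc$, use the second hypothesis to find an $\Emin{A}$-equivalent $\bd$ inside $M$, apply Lemma~\ref{weak-uq-lem-0} once for each of $p$ and $q$, and bridge the two via $p \rest M = q \rest M$. Your write-up is in fact slightly more explicit than the paper's about why $\gtp(b\bd/A) = \gtp(b'\bd/A)$ follows from $p \rest M = q \rest M$ and about the matching of the saturation hypotheses.
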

\begin{proof}
  Say $p = \gtp (a / N)$, $q = \gtp (b / N)$. Let $\bc \in \fct{<\mu}{|N|}$. We want to see that $\gtp (a / \bc) = \gtp (b / \bc)$. We will show that $\gtp (a \bc / A) = \gtp (b \bc / A)$. Pick $\bd$ in $M$ such that $\bc \Emin{A} \bd$. Then by Lemma \ref{weak-uq-lem-0}, $\gtp (a \bc / A) = \gtp (a \bd / A)$. Since $p \rest M = q \rest M$, $\gtp (a \bd / A) = \gtp (b \bd / A)$. By Lemma \ref{weak-uq-lem-0} again, $\gtp (b \bd / A) = \gtp (b \bc / A)$. Combining these equalities, we get that $\gtp (a \bc / A) = \gtp (b \bc / A)$, as desired.
\end{proof}

\section{Dividing}
\begin{hypothesis}
  $\K$ is an AEC with a monster model.
\end{hypothesis}

The following notion generalizes first-order dividing and was introduced by Shelah \cite[4.8]{sh394}. 

\begin{defin}
  Let $A \subseteq B$, $p \in \gS (B)$. We say that $p$ \emph{divides over $A$} if there exists an infinite cardinal $\theta$ and a strictly indiscernible sequence $\seq{\bb_i : i < \theta}$ over $A$  as well as $\seq{f_i : i < \theta}$ automorphisms of $\sea$ fixing $A$ such that $\bb_0$ is an enumeration of $B$, $f_i (\bb_0) = \ba_i$ for all $i < \theta$, and $\seq{f_i (p) : i < \theta}$ is inconsistent.
\end{defin}

It is clear from the definition that dividing induces an independence relation:

\begin{defin}
  For $\lambda \ge \LS (\K)$, we let $\isdiv (\K_\lambda)$ be the independence relation whose underlying class is $\K'$ and whose independence relation is non-dividing.
\end{defin}

The following fact about dividing was proven by Shelah in \cite[5.5(2)]{sh394}:

\begin{fact}\label{dividing-lc}
  Let $\mu_1 \ge \mu_0 \ge \LS (\K)$. Let $\alpha < \mu_1^+$ be a regular cardinal. If $\K$ is stable in $\mu_1$ and $\mu_1^\alpha > \mu_1$, then $\alpha \in \bclcwk (\isdiv (\K_{\mu_0}))$ (recall Definition \ref{loc-def}).
\end{fact}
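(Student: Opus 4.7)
The plan is to argue by contradiction: suppose there exist an $\lea$-increasing chain $\seq{M_i : i < \alpha}$ in $\K_{\mu_0}$, some $N \in \K$ with $M_i \lea N$ for all $i < \alpha$, and $p \in \gS(\bigcup_{i<\alpha}|M_i|;N)$ such that $p \rest M_{i+1}$ divides over $M_i$ for every $i < \alpha$. The strategy is to manufacture $\mu_1^\alpha$ pairwise distinct Galois types over a single model of cardinality at most $\mu_1$; since $\mu_1^\alpha > \mu_1$, this contradicts stability of $\K$ in $\mu_1$.

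For each $i < \alpha$, unwinding the definition of dividing and invoking Fact~\ref{strict-indisc-charact} to stretch the witness to length $\mu_1$, one obtains a strictly indiscernible sequence $\seq{\bb_j^i : j < \mu_1}$ over $M_i$ together with automorphisms $f_j^i$ of $\sea$ fixing $M_i$ such that $\bb_0^i$ enumerates $M_{i+1}$, $f_j^i(\bb_0^i) = \bb_j^i$, and the family $\seq{f_j^i(p \rest M_{i+1}) : j < \mu_1}$ is inconsistent. A standard indiscernibility argument upgrades inconsistency of the family to pairwise incompatibility of its members: if some pair admitted a common realization, then by strict indiscernibility the entire family would be simultaneously realizable, contradicting inconsistency.

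The heart of the proof is then a tree construction indexed by $\fct{\le\alpha}{\mu_1}$. By transfinite recursion on $i \le \alpha$, for each branch $\eta \in \fct{i}{\mu_1}$ I define an automorphism $g_\eta$ of $\sea$ (built by composing transported copies of the $f_{\eta(k)}^k$ for $k < i$), a model $M_\eta := g_\eta\!\left[\bigcup_{k<i} M_k\right]$, and a type $p_\eta$ obtained by transporting $p \rest \bigcup_{k<i} M_k$ via $g_\eta$; limit stages are handled by unions. Pairwise incompatibility at successor stages forces distinct branches to produce distinct types. Since $\alpha < \mu_1^+$, the ambient universe $N^\ast := \bigcup\{g_\eta[M_k] : k < i < \alpha,\ \eta \in \fct{i}{\mu_1}\}$ has cardinality at most $\mu_0 \cdot \mu_1 \cdot |\alpha| = \mu_1$, yet it supports $\mu_1^\alpha$ distinct Galois types, contradicting stability of $\K$ in $\mu_1$.

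The main obstacle is the coherence and cardinal bookkeeping of the tree: one must choose the automorphisms $g_\eta$ along branches so that the pairwise incompatibility at each level actually propagates to distinct final types rather than being washed out by later branching, and one must verify that the compositions along branches are well defined and fix the right sets. A secondary delicate step is the inconsistency-to-pairwise-incompatibility upgrade, which, while standard, genuinely depends on \emph{strict} indiscernibility (so that one is entitled to extend the witnessing sequence arbitrarily) and not merely on plain indiscernibility.
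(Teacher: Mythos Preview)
The paper does not supply its own proof of this statement: it is quoted as a black-box fact from Shelah \cite[5.5(2)]{sh394}. So there is no ``paper's proof'' to compare against; I will instead assess the correctness of your attempt.

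Your overall strategy---assume a dividing chain of length $\alpha$, build a tree of height $\alpha$ of transported models and types, and count types over an ambient model of size $\mu_1$---is the right shape, and the bookkeeping you describe (composing automorphisms along branches, keeping the ambient model inside $\K_{\mu_1}$ since $\alpha < \mu_1^+$ and $\mu_0 \le \mu_1$) is fine.

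The gap is precisely the step you flag as ``standard'': upgrading inconsistency of the family $\{f_j^i(p\rest M_{i+1}) : j < \mu_1\}$ to pairwise incompatibility. Your argument says that a common realization of some pair $q_{j_0}, q_{j_1}$ would, by strict indiscernibility, yield a simultaneous realization of the whole family. This is false. Strict indiscernibility only tells you that for every pair $j < k$ the type $\gtp(\bb_j\bb_k / M_i)$ is fixed, hence \emph{some} (possibly different) element realizes $q_j \cup q_k$; it gives no mechanism to glue these pairwise witnesses into a single element realizing all $q_j$ at once. In the absence of compactness there is no obstruction to a family that is pairwise (even finitely) consistent yet globally inconsistent---picture a bipartite-graph configuration in which every element is adjacent to only finitely many members of the indiscernible sequence, while every finite subset of the sequence has a common neighbor. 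Without pairwise incompatibility your final counting collapses: at each node of the tree, inconsistency only rules out \emph{one} child for a given realization, so a single type over $N^\ast$ could in principle sit above a set of branches of full cardinality $\mu_1^\alpha$, and you cannot conclude $|\gS(N^\ast)| > \mu_1$.

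Shelah's argument in \cite{sh394} avoids this trap; the construction there does not rely on two-by-two contradiction at each level. If you want to repair the tree approach, the fix is not to claim pairwise incompatibility but to argue, for each of the at most $\mu_1$ candidate types $r \in \gS(N^\ast)$, that the set of branches $\eta$ with $r \rest M_\eta = p_\eta$ is small---and for this one stretches the indiscernibles beyond $\mu_1$ and selects branches adaptively against the list of candidates, rather than building the full product tree in advance.
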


To see when strong splitting implies dividing, Shelah considered the following property:

\begin{defin} 
  $\K$ satisfies $(\ast)_{\mu, \theta, \sigma}$ if whenever $\seq{\ba_i : i < \delta}$ is a strictly indiscernible sequence with $\ell (\ba_i) < \mu$ for all $i < \delta$, then for any $\bb$ with $\ell (\bb) < \sigma$, there exists $u \subseteq \delta$ with $|u| < \theta$ such that for any $i,j \in \delta \backslash u$, $\gtp (\ba_i \bb / \emptyset) = \gtp (\ba_j \bb / \emptyset)$.
\end{defin}

\begin{fact}[4.12 in \cite{sh394}]\label{star-op}
  Let $\mu^\ast := \LS (\K) + \mu + \sigma$. If $\K$ does not have the $\mu^\ast$-order property (recall Definition \ref{op-def}), then $(\ast)_{\mu^+, \hanf{\mu^\ast}, \sigma^+}$ holds.
\end{fact}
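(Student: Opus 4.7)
The plan is to prove the contrapositive. Assume $(\ast)_{\mu^+, \hanf{\mu^\ast}, \sigma^+}$ fails, with witnesses a strictly indiscernible sequence $\BI = \seq{\ba_i : i < \delta}$ with $\ell(\ba_i) \le \mu$ and a tuple $\bb$ with $\ell(\bb) \le \sigma$ such that no $u \subseteq \delta$ of size $<\hanf{\mu^\ast}$ absorbs the variation of $\gtp(\ba_i/\bb)$. By Fact \ref{strict-indisc-charact} we may take $\delta$ arbitrarily large.

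The pivotal observation is that a type inequality $\gtp(\ba_i/\bb) \neq \gtp(\ba_j/\bb)$ forces $\gtp(\ba_i \bb \ba_j \bb / \emptyset) \neq \gtp(\ba_j \bb \ba_i \bb / \emptyset)$: otherwise an automorphism witnessing the latter equality would swap $\ba_i, \ba_j$ while fixing $\bb$ and collapse the single types. Iterating the failure of $(\ast)$, I would select an increasing family of disjoint index-pairs $\seq{(k_\alpha, l_\alpha) : \alpha < \hanf{\mu^\ast}}$ with $k_\alpha < l_\alpha < k_{\alpha+1}$ and $\gtp(\ba_{k_\alpha}/\bb) \neq \gtp(\ba_{l_\alpha}/\bb)$ at each step (the inductive step is available because at stage $\alpha < \hanf{\mu^\ast}$ the initial segment used so far has size $<\hanf{\mu^\ast}$, so the failure of $(\ast)$ applies). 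Then I would apply Fact \ref{indisc-extraction} to the sequence $\seq{\ba_{k_\alpha} \smallfrown \ba_{l_\alpha} : \alpha < \hanf{\mu^\ast}}$ over $\bb$ to extract, for arbitrary $\theta$, a strictly indiscernible sequence of ``tagged pairs'' $\seq{(\ba'_\alpha, \tilde{\ba}'_\alpha) : \alpha < \theta}$ over $\bb$, with constant and distinct types $p^\ast := \gtp(\ba'_\alpha/\bb) \neq \gtp(\tilde{\ba}'_\alpha/\bb) =: q^\ast$ inherited from one of the original pairs. Set $\bar{c}_\alpha := \ba'_\alpha \smallfrown \tilde{\ba}'_\alpha \smallfrown \bb$, of length at most $\mu + \sigma \le \mu^\ast$ (padded if needed).

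The main obstacle is verifying that $\seq{\bar{c}_\alpha : \alpha < \theta}$ witnesses the $\mu^\ast$-order property. For $\alpha_0 < \alpha_1$ and $\beta_0 < \beta_1$, an equality $\gtp(\bar{c}_{\alpha_0}\bar{c}_{\alpha_1}/\emptyset) = \gtp(\bar{c}_{\beta_1}\bar{c}_{\beta_0}/\emptyset)$ would yield an automorphism $f$ fixing $\bb$ with $f(\ba'_{\alpha_0}) = \ba'_{\beta_1}$, $f(\tilde{\ba}'_{\alpha_0}) = \tilde{\ba}'_{\beta_1}$, $f(\ba'_{\alpha_1}) = \ba'_{\beta_0}$, $f(\tilde{\ba}'_{\alpha_1}) = \tilde{\ba}'_{\beta_0}$ --- a global pair-order reversal preserving each pair's internal $(p^\ast, q^\ast)$-pattern. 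Composing $f$ with the order-preserving automorphisms provided by strict indiscernibility of the pair sequence over $\bb$ should, after careful bookkeeping, produce an automorphism fixing $\bb$ that maps a $p^\ast$-typed element to a $q^\ast$-typed element of some pair, contradicting $p^\ast \neq q^\ast$. The combinatorial subtlety of converting a ``global pair reversal'' into a ``local element swap'' is the crux of the proof and may call for a secondary Ramsey-style refinement of the pair sequence to ensure the required composition is available.
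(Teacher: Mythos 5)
The paper cites this as a black-box Fact from Shelah's \cite{sh394} and gives no proof of its own, so there is nothing to compare line-by-line; I can only assess your argument on its own terms. Your contrapositive setup, the ``pivotal observation'' (if $\gtp(\ba_i/\bb) \neq \gtp(\ba_j/\bb)$ then $\gtp(\ba_i\bb\,\ba_j\bb/\emptyset) \neq \gtp(\ba_j\bb\,\ba_i\bb/\emptyset)$, because an automorphism witnessing the latter equality would fix $\bb$ and swap $\ba_i,\ba_j$), and the use of Fact \ref{indisc-extraction} to extract a strictly $\bb$-indiscernible pair sequence with $p^\ast \neq q^\ast$ are all sound ideas; the length bookkeeping $\ell(\bar c_\alpha) \le \mu + \sigma \le \mu^\ast$ is also fine.

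The genuine gap is precisely where you flagged the ``main obstacle,'' and it is not resolvable by the sort of composition you gesture at. If $r_0 := \gtp(\bar c_{\alpha_0}\bar c_{\alpha_1}/\emptyset)$ equals $r_1 := \gtp(\bar c_{\beta_1}\bar c_{\beta_0}/\emptyset)$, the witnessing automorphism $f$ fixes $\bb$ and swaps the two tagged pairs \emph{componentwise}: $f(\ba'_{\alpha_0}) = \ba'_{\beta_1}$, $f(\tilde\ba'_{\alpha_0}) = \tilde\ba'_{\beta_1}$, and so on. This $f$ always sends $p^\ast$-tagged elements to $p^\ast$-tagged elements and $q^\ast$-tagged to $q^\ast$-tagged, so it is perfectly compatible with $p^\ast \neq q^\ast$; there is nothing to contradict. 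Composing $f$ with the shift automorphisms coming from strict $\bb$-indiscernibility of the pair sequence only produces further componentwise-tag-preserving automorphisms, so no amount of ``careful bookkeeping'' along these lines will ever force a map from a $p^\ast$-typed element to a $q^\ast$-typed one. A secondary Ramsey refinement would add uniformity but not asymmetry: the pair sequence could, consistently with all the constraints you have imposed, be reversal-symmetric over $\bb$ (indeed totally indiscernible as a sequence of pairs), and then $r_0 = r_1$ and your sequence simply does not witness the order property. You need an encoding in which the forward and backward pair-types have a genuinely different $\bb$-type ``shape'' (the first-order analogue encodes a \emph{crossing} of two parameter sequences), not merely a different pair ordering; tagged pairs over a single fixed $\bb$ cannot produce that shape by themselves. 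There is also a smaller issue earlier: when you iteratively select $(k_\alpha, l_\alpha)$, the failure of $(\ast)$ only supplies a bad pair outside the \emph{set} $u$ of used indices, not beyond its supremum, so arranging $k_\alpha < l_\alpha < k_{\alpha+1}$ requires either taking $u$ to be an initial segment (and then arguing its size stays below $\hanf{\mu^\ast}$) or abandoning the monotone-interleaving requirement and instead working with an Erd\H{o}s--Rado style refinement; but this is repairable, unlike the final step.
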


\begin{lem}\label{dividing-ss}
  Let $A \subseteq B$. Let $p \in \gS (B)$. Assume that $(\ast)_{|B|^+, \theta, \sigma}$ holds for some infinite cardinals $\theta$ and $\sigma$.

  If $p$ explicitly $|B|$-strongly splits over $A$, then $p$ divides over $A$.
\end{lem}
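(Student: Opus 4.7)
The strategy is to transform the strong-splitting witness $\seq{\ba_i : i < \omega}$ into a genuine dividing witness (a strictly indiscernible sequence of enumerations of copies of $B$), then to exploit $(\ast)$ to show that the associated translates of $p$ are jointly inconsistent.

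First, I would extend $\seq{\ba_i : i < \omega}$ to a strictly indiscernible sequence $\seq{\ba_i : i < \chi}$ over $A$ of length $\chi \ge \hanf{\LS (\K) + |B|}$. Fix an enumeration $\bb$ of $B$ in which $\ba_0 \ba_1$ appears at specified positions $\alpha_0, \alpha_1$. Using indiscernibility, for each $i < \chi/2$ pick an $A$-fixing automorphism $h_i$ with $h_i(\ba_0 \ba_1) = \ba_{2i} \ba_{2i+1}$, and set $\bb_i := h_i(\bb)$; the (possibly non-indiscernible) sequence $\seq{\bb_i : i < \chi/2}$ has length large enough to feed into Fact \ref{indisc-extraction}, producing a strictly indiscernible $\seq{\bb_i^* : i < \omega}$ over $A$ whose finite-initial-segment types match those of subsequences of $\seq{\bb_i}$. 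After an $A$-fixing normalization so that $\bb_0^* = \bb$ and extension via the EM blueprint to length $\theta$, the resulting $(\seq{\bb_i^* : i < \theta}, \seq{f_i : i < \theta})$ (with $f_i \in \Aut (\sea)$ fixing $A$ and $f_i(\bb) = \bb_i^*$) is the candidate for witnessing dividing.

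Now suppose for contradiction that some $b$ realizes $\bigcup_{i < \theta} f_i(p)$. Since each $f_i^{-1}(b)$ realizes $p$ and $f_i$ fixes $A$, explicit strong splitting gives $\gtp(b \bb_i^*[\alpha_0]/A) = r_0 \neq r_1 = \gtp(b \bb_i^*[\alpha_1]/A)$ uniformly in $i$. The finite matching of Fact \ref{indisc-extraction} lets us pick, for each finite $n$, indices $k_0 < \ldots < k_{n-1} < \chi/2$ together with an $A$-fixing automorphism $F_n$ sending the pair-projection of the first $n$ terms of $\seq{\bb_i^*}$ onto $\ba_{2k_0}\ba_{2k_0+1}\ldots\ba_{2k_{n-1}}\ba_{2k_{n-1}+1}$. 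Setting $b_n := F_n(b)$, this transfers the alternating pattern of types to the original strictly indiscernible sequence: $\gtp(b_n \ba_{2k_l}/A) = r_0$ and $\gtp(b_n \ba_{2k_l+1}/A) = r_1$ for each $l < n$. Applying $(\ast)_{|B|^+, \theta, \sigma}$ to the strictly indiscernible sequence $\seq{\ba_j \ba : j < \chi}$ (where $\ba$ enumerates $A$, absorbed into the sequence) with parameter $b_n$ yields $u_n \subseteq \chi$ with $|u_n| < \theta$ on whose complement $\gtp(b_n \ba_j/A)$ is constant; arranging $n$ and the $k_l$'s so that both some even-indexed $2k_l$ and some odd-indexed $2k_{l'}+1$ lie outside $u_n$ forces $r_0 = r_1$, contradicting explicit strong splitting.

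The main obstacle is the quantitative interplay of the last step: $n$ must be finite (it indexes the length of the finite initial segment provided by Fact \ref{indisc-extraction}), whereas $|u_n|$ may be as large as $\theta$ itself. The argument is straightforward when $\theta = \omega$, since then $|u_n|$ is finite and any $n > 2|u_n|$ works; in the general case one needs to spread the $k_l$'s through $\chi/2$ and possibly iterate the $(\ast)$-argument with progressively enriched parameters so that the exceptional sets cannot absorb both parities simultaneously.
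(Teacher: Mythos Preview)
Your proposal has the gap you yourself flag at the end, and it is a real one: the finite matching coming out of Fact \ref{indisc-extraction} gives you control of $b_n$'s type only at $2n$ indices of the original sequence, while the exceptional set $u_n$ produced by $(\ast)$ can have size just below $\theta$. Since $u_n$ depends on $b_n$, which in turn depends on your choice of the $k_l$'s, there is no evident way to arrange that some $2k_l$ and some $2k_{l'}+1$ fall outside $u_n$. Your suggested iteration with ``progressively enriched parameters'' is not a proof, and I do not see how to make it one without essentially rebuilding the argument.

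The difficulty is self-inflicted. The paper avoids the extraction step entirely by exploiting the word \emph{explicitly}: since $\ba_0\ba_1 \in \fct{\le |B|}{B}$, one can pad each $\ba_i$ by a fixed enumeration $\bc$ of $B$, obtaining a new strictly indiscernible sequence $\seq{\ba_i\bc : i < \omega}$ over $A$ whose first term $\ba_0\bc$ already enumerates $B$ and which still witnesses the strong splitting (the original $\ba_0,\ba_1$ are initial segments). Now extend \emph{this} sequence to length $\theta^+$ and take $f_i$ fixing $A$ with $f_i(\ba_0\ba_1) = \ba_i\ba_{i+1}$ (so $f_i(B)$ is enumerated by the $i$th term). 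If some $b$ realizes every $f_i(p)$, apply $(\ast)_{|B|^+,\theta,\sigma}$ once, to the single parameter $b$ and the single long sequence: the exceptional set has size $<\theta$, so some consecutive pair $i,i+1$ avoids it, giving $\gtp(b\ba_i/A) = \gtp(b\ba_{i+1}/A)$; pulling back by $f_i^{-1}$ contradicts strong splitting. No extraction, no finite-versus-$\theta$ mismatch. The step you were missing is simply that the strong-splitting witness can be taken to \emph{be} the dividing witness after padding.
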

\begin{proof}
  Let $\mu := |B|$. Let $\seq{\ba_i : i < \omega}$ witness the explicit strong splitting (so $\ell (\ba_i) = \mu$ for all $i < \omega$ and $\ba_0 \in \fct{\mu}{B}$). Increase the indiscernible to assume without loss of generality that $\ba_0$ enumerates $B$ and increase further to get $\seq{\ba_i : i < \theta^+}$. Pick $\seq{f_i : i < \theta^+}$ automorphisms of $\sea$ fixing $A$ such that $f_0$ is the identity and $f_i (\ba_0 \ba_1) = \ba_i \ba_{i + 1}$ for each $i < \theta^+$. We claim that $\seq{\ba_i : i < \theta^+}$, $\seq{f_i : i < \theta^+}$ witness the dividing over $A$.

  Indeed, suppose for a contradiction that $b$ realizes $f_i (p)$ for each $i < \theta^+$. In particular, $b$ realizes $f_0 (p) = p$. By $(\ast)_{\mu^+, \theta, \sigma}$, there exists $i < \theta^+$ such that $\gtp (b \ba_i / A) = \gtp (b \ba_{i + 1} / A)$. Applying $f_i^{-1}$ to this equation, we get that $\gtp (c \ba_0 / A) = \gtp (c \ba_1 / A)$, where $c := f_i^{-1} (b)$. But since $b$ realizes $f_i (p)$, $c$ realizes $p$. This contradicts the strong splitting assumption.
\end{proof}

We have arrived to the following result:

\begin{lem}\label{shelah-fact}
  Let $\mu_1 \ge \mu_0 \ge \LS (\K)$ be such that $\K$ is stable in both $\mu_0$ and $\mu_1$. Assume further that $\K$ does not have the $\mu_0$-order property.

  Let $\alpha < \mu_0^+$ be a regular cardinal. If $\mu_1^\alpha > \mu_1$, then:

  $$
  \alpha \in \bclcwk (\isssp{\mu_0} (\K_{\mu_0}), \ltu)
  $$
\end{lem}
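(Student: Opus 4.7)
The strategy is to reduce $\mu_0$-strong-splitting in the chain setting to dividing, then invoke Fact~\ref{dividing-lc}. Proceeding by contradiction, suppose $\alpha \notin \bclcwk(\isssp{\mu_0}(\K_{\mu_0}), \ltu)$: fix a $\ltu$-increasing chain $\seq{M_i : i < \alpha}$ in $\K_{\mu_0}$ together with a type $p \in \gS(\bigcup_{i < \alpha} M_i)$ such that $p \rest M_{i+1}$ $\mu_0$-strongly splits over $M_i$ for every $i < \alpha$. The aim is to show that each $p \rest M_{i+1}$ divides over $M_i$, which will contradict Fact~\ref{dividing-lc} (applicable since $\K$ is stable in $\mu_1$ with $\mu_1^\alpha > \mu_1$).

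The first step is to upgrade each non-explicit $\mu_0$-strong-splitting to an explicit one. Universality of $M_{i+1}$ over $M_i$ in $\K_{\mu_0}$ ensures that every Galois type in $\gS^{\le \mu_0}(M_i)$ is realized in $M_{i+1}$: given a realization in $\sea$, L\"owenheim-Skolem produces a $\lea$-substructure of size $\mu_0$ containing both $M_i$ and the realization, and this substructure embeds into $M_{i+1}$ over $M_i$ by universality. In the terminology of Definition~\ref{sat-def}, $M_{i+1}$ is thus $\mu_0^+$-saturated (and a fortiori $(\aleph_1 + \mu_0^+)$-saturated) over $M_i$. Applying Lemma~\ref{weak-uq-lem-0} with parameter $\mu := \mu_0^+$ yields the equivalence of non-explicit and explicit $(<\mu_0^+)$-strong-splitting of $p \rest M_{i+1}$ over $M_i$. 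Since $(<\mu_0^+)$-strong-splitting coincides with $\mu_0$-strong-splitting, we conclude that $p \rest M_{i+1}$ in fact explicitly $\mu_0$-strongly splits over $M_i$.

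Next, since $\K$ does not have the $\mu_0$-order property, Fact~\ref{star-op} with $\mu = \sigma = \mu_0$ gives $\mu^* = \LS(\K) + \mu_0 + \mu_0 = \mu_0$ and hence yields $(\ast)_{\mu_0^+, \hanf{\mu_0}, \mu_0^+}$. Since $|M_{i+1}| = \mu_0$, this is precisely the instance of $(\ast)$ required by Lemma~\ref{dividing-ss}; that lemma converts the explicit $\mu_0$-strong-splitting into dividing, so $p \rest M_{i+1}$ divides over $M_i$ for every $i < \alpha$. But Fact~\ref{dividing-lc} forces some $i < \alpha$ with $p \rest M_{i+1}$ not dividing over $M_i$---the desired contradiction.

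The main technical hurdle is the conversion in the second paragraph: verifying that the universality of $M_{i+1}$ over $M_i$ provides exactly the saturation hypothesis needed to apply Lemma~\ref{weak-uq-lem-0}, and carefully aligning the $(<\mu)$ versus $\mu$ conventions so that the equivalence furnished by that lemma bears on $\mu_0$-strong-splitting rather than a strictly weaker length condition. Once this is in place, the remainder is a direct chain of applications of Lemma~\ref{dividing-ss} and Fact~\ref{dividing-lc}.
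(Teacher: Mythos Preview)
The proposal is correct and follows essentially the same approach as the paper: both use Fact~\ref{star-op} to obtain $(\ast)_{\mu_0^+,\hanf{\mu_0},\mu_0^+}$, invoke Fact~\ref{dividing-lc} for dividing, Lemma~\ref{dividing-ss} to connect dividing with explicit strong splitting, and Lemma~\ref{weak-uq-lem-0} (via the $\mu_0^+$-saturation of $M_{i+1}$ over $M_i$ granted by universality) to pass between explicit and non-explicit strong splitting. The only cosmetic difference is that you phrase the argument by contradiction whereas the paper argues directly.
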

\begin{proof}
  By Fact \ref{star-op}, $(\ast)_{\mu_0^+, \hanf{\mu_0}, \mu_0^+}$ holds.

  Now let $\seq{M_i : i < \alpha}$ be $\ltu$-increasing in $\K_\lambda$. Let $p \in \gS (\bigcup_{i < \alpha} M_i)$. By Fact \ref{dividing-lc}, there exists $i < \alpha$ such that $p \rest M_{i + 1}$ does not divide over $M_i$. By Lemma \ref{dividing-ss}, $p \rest M_{i + 1}$ does not explicitly $\lambda$-strongly split over $M_i$. By Lemma \ref{weak-uq-lem-0} (recall that $M_{i + 1}$ is universal over $M_i$), $p \rest M_{i + 1}$ does not $\lambda$-strongly split over $M_i$.
\end{proof}

Our aim in the next section will be to show that non-strong splitting has weak uniqueness. This will allow us to apply the results of Section \ref{cont-sec} and (assuming enough locality) replace $\bclcwk$ by $\bclc$.

\section{Strong splitting in stable tame AECs}\label{strong-split-sec}

\begin{hypothesis}\label{ss-tame-hyp}
  $\K$ is an $\LS (\K)$-tame AEC with a monster model.
\end{hypothesis}

Why do we assume tameness? Because we would like to exploit the uniqueness of strong splitting (Lemma \ref{weak-uq-ss}), but we want to be able to conclude $p = q$, and not just $p \rest B = q \rest B$ for every small $B$. This will allow us to use the tools of Section \ref{cont-sec}.

\begin{defin}\label{chi-ast-def}
  For $\mu \ge \LS (\K)$, let $\chi^\ast (\mu) \in [\mu^+, \hanf{\mu}$ be the least cardinal $\chi^\ast$ such that whenever $A$ has size at most $\mu$ and $\alpha < \mu^{+}$ then $c (\Emin{A}) < \chi^\ast$ (it exists by Lemma \ref{bounded-equiv-size}).
\end{defin}

The following is technically different from the $\mu$-forking defined in \cite[4.2]{ss-tame-jsl} (which uses $\mu$-splitting), but it is patterned similarly.

\begin{defin}
  For $p \in \gS (B)$, we say that \emph{$p$ does not $\mu$-fork over $(M_0, M)$} if:

  \begin{enumerate}
  \item $M_0 \lea M$, $|M| \subseteq B$.
  \item $M_0 \in \K_{\mu}$.
  \item $M$ is $\chi^\ast (\mu)$-saturated over $M_0$.
  \item $p$ does not $\mu$-strongly split over $M_0$.
  \end{enumerate}

  We say that $p$ does not $\mu$-fork over $M$ if there exists $M_0$ such that $p$ does not $\mu$-fork over $(M_0, M)$. 
\end{defin}

The basic properties are satisfied:

\begin{lem} \
  \begin{enumerate}
  \item (Invariance) For any automorphism $f$ of $\sea$, $p \in \gS (B)$ does not $\mu$-fork over $(M_0, M)$ if and only if $f(p)$ does not $\mu$-fork over $(f[M_0], f[M])$.
  \item (Monotonicity) Let $M_0 \lea M_0' \lea M \lea M'$, $|M'| \subseteq B$. Assume that $M_0, M_0' \in \K_\mu$ and $M$ is $\chi^\ast (\mu)$-saturated over $M_0'$

    Let $p \in \gS (B)$ be such that $p$ does not $\mu$-fork over $(M_0, M)$. Then:
    \begin{enumerate}
    \item $p$ does not $\mu$-fork over $(M_0', M)$.
    \item $p$ does not $\mu$-fork over $(M_0, M')$.
    \end{enumerate}
  \end{enumerate}
\end{lem}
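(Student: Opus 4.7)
The plan is to verify each of the four clauses in the definition of $\mu$-forking separately, in each of the three cases (invariance, (a), and (b)). Almost all clauses are immediate; the only non-trivial ones are the strong-splitting condition (handled by base monotonicity of strong splitting, Lemma~\ref{ss-base-monot}) and, for part (b), the saturation condition, which requires a small trick.

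For \emph{invariance}, one simply applies the automorphism $f$ coordinatewise. Since automorphisms of $\sea$ preserve $\lea$, the subset relation, membership in $\K_\mu$ (by closure of $\K$ under isomorphism), and Galois types over sets, the conditions $M_0 \lea M$, $|M|\subseteq B$, and $M_0\in\K_\mu$ transfer to $f[M_0] \lea f[M]$, $|f[M]|\subseteq f[B]$, $f[M_0]\in \K_\mu$. The saturation clause transfers because saturation-over-a-set is defined via realization of Galois types, which are automorphism-invariant; the non-$\mu$-strong-splitting clause transfers because strict indiscernibility and Galois types are both preserved by $f$.

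For \emph{monotonicity (a)}, the new clauses for $(M_0', M)$ are given directly by hypothesis except for the non-strong-splitting of $p$ over $M_0'$. Here I would invoke the contrapositive of Lemma~\ref{ss-base-monot} with $A := M_0$, $B := M_0'$ and bound $\mu^+$: since $|M_0'\setminus M_0|\le \mu < \mu^+$, if $p$ were to $(<\mu^+)$-strongly split over $M_0'$ it would $(<\mu^+)$-strongly split over $M_0$, contradicting that $p$ does not $\mu$-fork over $(M_0,M)$. Recall $(<\mu^+)$-strong splitting is the same as $\mu$-strong splitting, so this suffices.

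For \emph{monotonicity (b)}, the clauses $M_0\lea M'$ (transitivity), $|M'|\subseteq B$ (hypothesis), $M_0\in\K_\mu$ and non-$\mu$-strong-splitting of $p$ over $M_0$ are all immediate. The only step that requires a short argument is showing that $M'$ is $\chi^\ast(\mu)$-saturated over $M_0$. Since $M\lea M'$, any Galois type realized in $M$ is realized in $M'$, so it is enough to show $M$ is $\chi^\ast(\mu)$-saturated over $M_0$. This is where one has to notice that saturation over the \emph{larger} set $M_0'$ implies saturation over the \emph{smaller} set $M_0$: given $p\in\gS^{<\chi^\ast(\mu)}(M_0)$, pick any realization $a$ of $p$ in $\sea$ and let $\tilde p := \gtp(a/M_0')\in\gS^{<\chi^\ast(\mu)}(M_0')$, which exists by the presence of a monster model. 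By hypothesis $\tilde p$ is realized in $M$, and any such realization also realizes $p=\tilde p\rest M_0$. I expect this downward-transfer of saturation to be the only conceptually non-trivial point; everything else is a bookkeeping check against the four clauses of the definition.
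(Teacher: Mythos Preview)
Your proposal is correct and follows the same approach as the paper: invariance is routine, part (a) uses base monotonicity of strong splitting (Lemma~\ref{ss-base-monot}), and part (b) is a direct check of the definition.

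One minor simplification for part (b): you work harder than necessary. The saturation of $M$ over $M_0$ that you derive via the detour through $M_0'$ is already given to you directly, since it is clause~(3) in the definition of ``$p$ does not $\mu$-fork over $(M_0,M)$'', which is your hypothesis. Once you have that $M$ is $\chi^\ast(\mu)$-saturated over $M_0$, the passage to $M'$ via $M\lea M'$ is immediate, and so part~(b) really is, as the paper puts it, ``directly from the definitions''. Your downward-transfer-of-saturation argument is valid, just unnecessary here.
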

\begin{proof}
  Invariance is straightforward. We prove monotonicity. Assume that $M_0, M_0', M, M', B, p$ are as in the statement. First we have to show that $p$ does not $\mu$-fork over $(M_0', M)$. We know that $p$ does not $\mu$-strongly split over $M_0$. Since $M_0' \in \K_\mu$, Lemma \ref{ss-base-monot} implies that $p$ does not $\mu$-strongly split over $M_0'$, as desired.

  Similarly, it follows directly from the definitions that $p$ does not $\mu$-fork over $(M_0, M')$.
\end{proof}

This justifies the following definition:

\begin{defin}
  For $\lambda \ge \LS (\K)$, we write $\isf{\mu} (\K_\lambda)$ for the independence relation with class $\K_\lambda$ and independence relation induced by non-$\mu$-forking.
\end{defin}

We now want to show that under certain conditions $\isf{\mu} (\K_\lambda)$ has weak uniqueness (see Definition \ref{weak-uq-def}). First, we show that when two types do not fork over the same sufficiently saturated model, then the ``witness'' $M_0$ to the $\mu$-forking can be taken to be the same.

\begin{lem}\label{no-witness-lem}
  Let $M$ be $\chi^\ast (\mu)$-saturated. Let $|M| \subseteq B$. Let $p, q \in \gS (B)$ and assume that both $p$ and $q$ do not $\mu$-fork over $M$. Then there exists $M_0$ such that both $p$ and $q$ do not $\mu$-fork over $(M_0, M)$.
\end{lem}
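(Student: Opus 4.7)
The plan is to take a common witness by amalgamating the individual witnesses for $p$ and $q$ inside $M$. By the hypothesis that $p$ and $q$ do not $\mu$-fork over $M$, I can fix $M_0^p, M_0^q \in \K_\mu$ with $M_0^p, M_0^q \lea M$, such that $M$ is $\chi^\ast(\mu)$-saturated over each, while $p$ does not $\mu$-strongly split over $M_0^p$ and $q$ does not $\mu$-strongly split over $M_0^q$.

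First, I would use L\"owenheim--Skolem inside $M$ to produce $M_0 \in \K_\mu$ with $|M_0^p| \cup |M_0^q| \subseteq |M_0|$ and $M_0 \lea M$. By the coherence axiom of AECs, this also yields $M_0^p, M_0^q \lea M_0$, so $M_0$ is a plausible common witness.

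Next, I would check that $(M_0, M)$ witnesses non-$\mu$-forking for both $p$ and $q$. The conditions $M_0 \in \K_\mu$ and $|M| \subseteq B$ are immediate. For the saturation requirement, since $\|M_0\| = \mu < \chi^\ast(\mu)$, the standard $\chi^\ast(\mu)$-saturation of $M$ yields that $M$ is $\chi^\ast(\mu)$-saturated over $M_0$ (realizing sequences of length $<\chi^\ast(\mu)$ over $|M_0|$ by the usual inductive argument). The key content is showing that neither $p$ nor $q$ $\mu$-strongly splits over $M_0$: for this I would invoke the contrapositive of Lemma \ref{ss-base-monot}. Since $\mu$-strong splitting coincides with $(<\mu^+)$-strong splitting and $|M_0 \setminus M_0^p| \leq \mu < \mu^+$, any $\mu$-strong splitting of $p$ over $M_0$ would descend to $\mu$-strong splitting over $M_0^p$, contradicting the choice of $M_0^p$. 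The analogous argument works for $q$ using $M_0^q$.

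The main point to check carefully will be the upgrade from $\chi^\ast(\mu)$-saturation of $M$ (in the global sense, or over $M_0^p$ and $M_0^q$ individually) to $\chi^\ast(\mu)$-saturation over the amalgamated submodel $M_0$; once this is in hand, the rest of the argument is essentially a one-line application of upward base monotonicity for non-strong-splitting combined with the coherence-plus-L\"owenheim--Skolem construction of $M_0$.
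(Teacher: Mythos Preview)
Your proposal is correct and follows essentially the same approach as the paper: pick witnesses $M_p, M_q$ for $p$ and $q$, use L\"owenheim--Skolem to find a common $M_0 \in \K_\mu$ containing both, and invoke base monotonicity of strong splitting together with the fact that $\chi^\ast(\mu) > \mu$ to verify the saturation requirement. The paper's proof is simply a two-line version of your argument; your explicit identification of the saturation-over-$M_0$ step as the one point requiring care is a reasonable elaboration, but it is handled exactly as you indicate.
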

\begin{proof}
  Say $p$ does not fork over $(M_p, M)$ and $q$ does not fork over $(M_q, M)$. Pick $M_0 \lea M$ of size $\mu$ containing both $M_p$ and $M_q$. This works since $M$ is $\chi^\ast (\mu)$-saturated and $\chi^\ast (\mu) > \mu$.
\end{proof}

\begin{lem}\label{weak-uq-forking}
  Let $\mu \ge \LS (\K)$. Let $M \in \K_{\ge \mu}$ and let $B$ be a set with $|M| \subseteq B$. Let $p, q \in \gS (B)$ and assume that $p \rest M = q \rest M$.
  \begin{enumerate}
  \item (Uniqueness over $\chi^\ast$-saturated models) If $M$ is $\chi^\ast (\mu)$-saturated and $p, q$ do not $\mu$-fork over $M$, then $p = q$.
  \item (Weak uniqueness) Let $\lambda > \chi^\ast (\mu)$ be a stability cardinal. Let $M_0 \lea M$ be such that $M_0, M \in \K_{\lambda}$ and $M$ is universal over $M_0$. If $p, q$ do not $\mu$-fork over $M_0$, then $p = q$. In other words, $\isf{\mu} (\K_\lambda)$ has weak uniqueness.
  \end{enumerate}
\end{lem}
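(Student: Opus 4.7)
The plan for part (1) is to adapt the argument of Lemma \ref{weak-uq-ss}. First, apply Lemma \ref{no-witness-lem} to produce a common witness $M_0^\ast \in \K_\mu$, $M_0^\ast \lea M$, such that neither $p$ nor $q$ $\mu$-strongly splits over $M_0^\ast$ and $M$ is $\chi^\ast(\mu)$-saturated over $M_0^\ast$. Fix realizations $a$ of $p$ and $b$ of $q$. Given $\bc \in \fct{\alpha}{B}$ with $\alpha \le \mu$, the definition of $\chi^\ast(\mu)$ together with the saturation of $M$ over $M_0^\ast$ produces $\bd \in \fct{\alpha}{|M|}$ with $\bc \Emin{M_0^\ast} \bd$, and the chain of strict indiscernibles over $M_0^\ast$ witnessing this equivalence can be arranged inside $|M| \subseteq B$. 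The argument of Lemma \ref{weak-uq-lem-0} then applies: using the failure of explicit $\mu$-strong splitting of $p$ over $M_0^\ast$ (whose witnesses now lie in $B$), we obtain $\gtp(a\bc / M_0^\ast) = \gtp(a\bd / M_0^\ast)$, and symmetrically for $q$. Since $\bd \in |M|$ and $p \rest M = q \rest M$, we have $\gtp(a\bd / M_0^\ast) = \gtp(b\bd / M_0^\ast)$, and chaining the three equalities gives $\gtp(a / M_0^\ast \cup \bc) = \gtp(b / M_0^\ast \cup \bc)$ for every such $\bc$. Finally, $\LS(\K)$-tameness (applied after enlarging $B$ to a containing $\lea$-substructure and using that $\LS(\K) \le \mu$) yields $p = q$.

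For part (2), the strategy is to reduce to the setup of part (1). By the hypothesis that $p$ and $q$ do not $\mu$-fork over $M_0$, pick inner witnesses $M_{0p}^\ast, M_{0q}^\ast \in \K_\mu$ with $M_{0p}^\ast, M_{0q}^\ast \lea M_0$, and choose $M_0^\ast \lea M_0$ of size $\mu$ containing both. By base monotonicity (Lemma \ref{ss-base-monot}), neither $p$ nor $q$ $\mu$-strongly splits over $M_0^\ast$. The key intermediate claim is that $M$ is $\chi^\ast(\mu)$-saturated over $M_0^\ast$: since $\lambda > \chi^\ast(\mu)$ is a stability cardinal and $M$ is universal over $M_0 \supseteq M_0^\ast$ in $\K_\lambda$, any Galois type over $M_0^\ast$ of size $<\chi^\ast(\mu) \le \lambda$ is realized in some $M' \in \K_\lambda$ with $M_0 \lea M'$ (using amalgamation), and the realization maps into $M$ via a $\K_\lambda$-embedding fixing $M_0$. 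Consequently both $p$ and $q$ do not $\mu$-fork over $(M_0^\ast, M)$, and the argument of part (1) applies verbatim to conclude $p = q$.

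The main technical subtlety is adapting the argument of Lemma \ref{weak-uq-lem-0}, which is stated for types over models, to our types $p, q \in \gS(B)$. The crucial observation is that the $\chi^\ast(\mu)$-saturation of $M$ over $M_0^\ast$ is built into the definition of $\mu$-forking, which lets us locate each link of the strict-indiscernible chain realizing $\bc \Emin{M_0^\ast} \bd$ inside $|M| \subseteq B$; every intermediate comparison then falls under the \emph{explicit} form of non-$\mu$-strong splitting, which (unlike the full non-explicit form) is independent of the chosen realization of $p$ and so transfers cleanly to the specific realization $a$. The final tameness step in part (1) is routine once $B$ has been enlarged to a containing $\lea$-substructure, and the reduction of part (2) to part (1) is straightforward modulo the saturation claim, which is a standard application of universality.
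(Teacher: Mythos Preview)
Your proposal is correct and follows essentially the same strategy as the paper. For part (1), the paper simply cites Lemma \ref{no-witness-lem} and Lemma \ref{weak-uq-ss}; you unpack the application of Lemma \ref{weak-uq-ss} (and the subsequent tameness step) in more detail, but the argument is identical.

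For part (2), there is a minor but harmless difference worth noting. The paper uses stability in $\lambda$ to construct an auxiliary $M' \in \K_\lambda$ that is \emph{fully} $\chi^\ast(\mu)$-saturated with $M_0 \lea M'$, then uses universality of $M$ over $M_0$ to assume $M' \lea M$, and finally applies part (1) to $M'$ as a black box (with $p \rest M' = q \rest M'$). You instead argue directly from universality that $M$ is $\chi^\ast(\mu)$-saturated \emph{over $M_0^\ast$} and rerun the argument of part (1); this avoids invoking stability to build the auxiliary saturated model. Both routes are valid: the paper's gives a cleaner reduction to part (1) at the cost of one extra construction, while yours is slightly more direct but requires observing that the proof of part (1) only uses saturation over the common witness $M_0^\ast$, not full $\chi^\ast(\mu)$-saturation of $M$.
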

\begin{proof} \
  \begin{enumerate}
  \item By Lemma \ref{no-witness-lem}, we can pick $M_0$ such that both $p$ and $q$ do not $\mu$-fork over $(M_0, M)$. By Lemma \ref{weak-uq-ss}, $p = q$.
  \item Using stability, we can build $M' \in \K_\lambda$ that is $\chi^\ast (\mu)$-saturated with $M_0 \lea M'$. Without loss of generality (using universality of $M$ over $M_0$), $M' \lea M$. By base monotonicity, both $p$ and $q$ do not $\mu$-fork over $M'$. Since $p \rest M = q \rest M$, we also have that $p \rest M' = q \rest M'$. Now use the first part.
  \end{enumerate}
\end{proof}

The following theorem is the main result of this section, so we repeat its global hypotheses here for convenience.

\begin{thm}\label{stab-spec-forward}
  Let $\K$ be an $\LS (\K)$-tame AEC with a monster model.
  
  Let $\mu_0 \ge \LS (\K)$ be a stability cardinal. Let $\lambda > \chi^\ast (\mu_0)$ be another stability cardinal. For any $\mu_1 \ge \mu_0$, if $\K$ is stable in $\mu_1$ then $\mu_1^{<\clcwk (\K_\lambda, \ltu)} = \mu_1$ (recall Definition \ref{kappa-aec-def}).
\end{thm}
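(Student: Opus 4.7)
The plan is to show by induction on cardinals $\kappa < \clcwk(\K_\lambda, \ltu)$ that $\mu_1^\kappa = \mu_1$; for singular $\kappa$ the desired equality reduces to the cases of smaller cardinals by standard cardinal arithmetic ($\mu_1^\kappa = \mu_1^{\cf{\kappa}}$ once all smaller exponents are controlled), so the crux is regular $\alpha$. Fix regular $\alpha < \clcwk(\K_\lambda, \ltu)$ and argue toward contradiction supposing $\mu_1^\alpha > \mu_1$.

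The first point to check is that $\alpha < \mu_0^+$. Since $\mu_0$ is a stability cardinal, $\lambda (\K) \le \mu_0$; combining Fact \ref{splitting-lc} with Fact \ref{splitting-card} and base monotonicity of splitting yields $\lambda (\K)^+ \in \bclc (\K_\lambda, \ltu) \subseteq \bclcwk (\K_\lambda, \ltu)$, so $\clcwk (\K_\lambda, \ltu) \le \mu_0^+$. Stability in $\mu_0$ also precludes the $\mu_0$-order property, so Lemma \ref{shelah-fact} applies and delivers $\alpha \in \bclcwk (\isssp{\mu_0} (\K_{\mu_0}), \ltu)$.

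The main work is to transfer this to $\bclcwk (\isf{\mu_0} (\K_\lambda), \ltu)$. Given any $\ltu$-increasing chain $\seq{N_i : i < \alpha}$ in $\K_\lambda$ and $p \in \gS (\bigcup_{i < \alpha} N_i)$, I would inductively build a $\ltu$-increasing (continuous at limits) chain $\seq{M_i : i < \alpha}$ in $\K_{\mu_0}$ with $M_i \lea N_i$: at successor steps, stability in $\mu_0$ supplies a universal extension $M_{i+1}$ of $M_i$ in $\K_{\mu_0}$, and amalgamation combined with universality of $N_{i+1}$ over $N_i$ embeds it into $N_{i+1}$ over $M_i$. Applying the previous step to $\seq{M_i}$ and the restricted type $p \rest \bigcup_i M_i$ produces $i_0 < \alpha$ with $p \rest M_{i_0 + 1}$ not $\mu_0$-strongly splitting over $M_{i_0}$. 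The key technical observation (which I would verify by chasing automorphisms of $\sea$) is that $\mu_0$-strong splitting of a type over $M_{i_0}$ is determined by the restriction of that type to $M_{i_0}$; consequently $p \rest N_{i_0 + 2}$ also does not $\mu_0$-strongly split over $M_{i_0}$. Setting $j := i_0 + 1 < \alpha$, and using that $M_{i_0} \lea N_{i_0} \ltu N_j$ forces $N_j$ to be $\chi^\ast (\mu_0)$-saturated over $M_{i_0}$ (where $\lambda > \chi^\ast (\mu_0)$ is used to fit the relevant realizations inside $N_j$), the model $M_{i_0}$ witnesses that $p \rest N_{j+1}$ does not $\mu_0$-fork over $N_j$. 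Hence $\alpha \in \bclcwk (\isf{\mu_0} (\K_\lambda), \ltu)$.

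To close, Lemma \ref{weak-uq-forking} applies since $\lambda > \chi^\ast (\mu_0)$ and $\lambda$ is a stability cardinal, so $\isf{\mu_0} (\K_\lambda)$ has weak uniqueness, and Lemma \ref{canon-lem} then gives $\bclcwk (\isf{\mu_0} (\K_\lambda), \ltu) \subseteq \bclcwk (\K_\lambda, \ltu)$, whence $\alpha \in \bclcwk (\K_\lambda, \ltu)$. But this contradicts $\alpha < \clcwk (\K_\lambda, \ltu)$, since $\bclcwk (\K_\lambda, \ltu)$ is an end segment of regular cardinals with minimum $\clcwk (\K_\lambda, \ltu)$. The principal obstacle I expect is the \emph{index shift} in the transfer step: the $\chi^\ast (\mu_0)$-saturation of $N_j$ over a candidate witness $M_{i_0}$ is only guaranteed when $M_{i_0} \lea N_{i_0}$ for some $i_0 < j$, so the witness produced by Lemma \ref{shelah-fact} must appear one step before $N_j$ in the $M$-chain, which forces the non-forking conclusion to be about $N_{j+1}$ over $N_j$ with $j = i_0 + 1$.
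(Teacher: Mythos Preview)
Your overall architecture matches the paper's: reduce to regular $\alpha < \mu_0^+$ via Facts \ref{splitting-card} and \ref{splitting-lc}, invoke Lemma \ref{shelah-fact} (using that stability kills the order property, Fact \ref{op-equiv-fact}), transfer to $\bclcwk(\isf{\mu_0}(\K_\lambda),\ltu)$, and finish with Lemma \ref{canon-lem} and weak uniqueness (Lemma \ref{weak-uq-forking}). The paper simply cites \cite[\S4]{ss-tame-jsl} for the transfer step, so your attempt to spell it out is the only substantive addition.

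That transfer attempt, however, has a real gap. The ``key technical observation'' that $\mu_0$-strong splitting of a type over $M_{i_0}$ is determined by the restriction to $M_{i_0}$ is false. Strong splitting of $p$ over $A$ asks for an indiscernible sequence such that \emph{every} realization of $p$ distinguishes $\ba_0$ from $\ba_1$ over $A$; since an extension of $p$ has \emph{fewer} realizations, it is easier, not harder, for the extension to strongly split. Concretely: an algebraic type $p = \gtp(a/B)$ may strongly split over $\emptyset$ (pick any indiscernible sequence whose first two terms have different types over $a$), while $p\rest\emptyset$ need not (take $c$ lying ``inside'' the indiscernible sequence). So from ``$p\rest M_{i_0+1}$ does not $\mu_0$-strongly split over $M_{i_0}$'' you cannot conclude the same for $p\rest N_{j+1}$ by restriction alone. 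The fix is not an automorphism chase but rather the $\Emin{M_{i_0}}$-characterization of Lemma \ref{weak-uq-lem-0}: once the domain is $\chi^\ast(\mu_0)$-saturated over $M_{i_0}$ (which is exactly why $\lambda > \chi^\ast(\mu_0)$ and why the $N_i$ are in $\K_\lambda$), non-strong-splitting becomes condition (3), a statement about the single realization $b$ and $\Emin{M_{i_0}}$-classes, and \emph{that} transfers upward because every $\Emin{M_{i_0}}$-class is represented in the saturated model. Your ``index shift'' worry is a non-issue by comparison; the real obstacle is this monotonicity failure, and the $\chi^\ast(\mu_0)$-saturation is what buys you the needed persistence, not invariance under automorphisms.
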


The proof will use the following fact (recall from Hypothesis \ref{ss-tame-hyp} that we are working inside the monster model of a tame AEC):

\begin{fact}[4.13 in \cite{sv-infinitary-stability-afml}]\label{op-equiv-fact}
  The following are equivalent:

  \begin{enumerate}
  \item $\K$ is stable.
  \item $\K$ does not have the $\LS (\K)$-order property.
  \end{enumerate}
\end{fact}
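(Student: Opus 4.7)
I plan to prove the theorem by contrapositive: for each regular cardinal $\alpha$ with $\mu_1^\alpha > \mu_1$, I will show $\alpha \in \bclcwk(\K_\lambda, \ltu)$; since $\bclcwk(\K_\lambda, \ltu)$ is an end segment of regular cardinals, this forces $\alpha \geq \clcwk(\K_\lambda, \ltu)$, and the conclusion $\mu_1^\gamma = \mu_1$ for singular $\gamma < \clcwk(\K_\lambda, \ltu)$ then follows from the regular case via the standard cofinality/product argument. The easy half of the case split is $\alpha \geq \mu_0^+$: invoking the $\mu_0$-stability local character of splitting (Fact \ref{splitting-lc}), given $\seq{M_i : i < \alpha}$ $\ltu$-increasing in $\K_\lambda$ and $p \in \gS(\bigcup_i M_i)$, one finds $M_0' \lea \bigcup_i M_i$ in $\K_{\mu_0}$ over which $p$ does not $\mu_0$-split; cofinality of $\alpha \geq \mu_0^+$ absorbs $M_0'$ into some $M_{i_0}$, tameness (Fact \ref{splitting-card}) promotes non-$\mu_0$-splitting over $M_0'$ to non-$\lambda$-splitting, and base monotonicity transfers this to $M_{i_0}$. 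Thus $\mu_0^+ \in \bclc(\K_\lambda, \ltu) \subseteq \bclcwk(\K_\lambda, \ltu)$, so every regular $\alpha \geq \mu_0^+$ is in $\bclcwk(\K_\lambda, \ltu)$ by the end-segment property.

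The harder case is $\alpha < \mu_0^+$. Since $\K$ is stable in $\mu_0$, by Fact \ref{op-equiv-fact} (together with the standard transfer of the order property across arities available under tameness) $\K$ has no $\mu_0$-order property, so Lemma \ref{shelah-fact} applies with the given $\mu_0$ and $\mu_1$ and yields $\alpha \in \bclcwk(\isssp{\mu_0}(\K_{\mu_0}), \ltu)$. I then transfer this to $\alpha \in \bclcwk(\isf{\mu_0}(\K_\lambda), \ltu)$, after which Lemma \ref{canon-lem}(2) delivers $\alpha \in \bclcwk(\K_\lambda, \ltu)$ because $\isf{\mu_0}(\K_\lambda)$ has weak uniqueness by Lemma \ref{weak-uq-forking}(2) (this is precisely where $\lambda > \chi^\ast(\mu_0)$ and the stability of $\lambda$ are used). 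The transfer itself proceeds by taking the $\ltu$-chain $\seq{M_i : i < \alpha}$ in $\K_\lambda$ with type $p$, building inside it by stability in $\mu_0$ and $\lambda$ a parallel $\ltu$-chain $\seq{N_i : i < \alpha}$ in $\K_{\mu_0}$ with $N_i \lea M_i$ and $M_i$ being $\chi^\ast(\mu_0)$-saturated over $N_i$, applying the $\K_{\mu_0}$-locality to $q := p \rest \bigcup_i N_i$ to get $i$ with $q \rest N_{i+1}$ not $\mu_0$-strongly splitting over $N_i$, and finally promoting this to ``$p \rest M_{i+1}$ does not $\mu_0$-strongly split over $N_i$'' (equivalently, does not $\mu_0$-fork over $M_i$ with witness $N_i$) using the $\Emin{N_i}$-characterization of non-strong-splitting (Lemma \ref{weak-uq-lem-0}) combined with weak uniqueness of non-strong-splitting (Lemma \ref{weak-uq-ss}).

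The main obstacle is this final promotion step. Since the predicate ``does not $\mu_0$-strongly split'' is monotone under restriction of types but not under extension, lifting the verdict from the small sub-type $q \rest N_{i+1}$ to the bigger $p \rest M_{i+1}$ must route through a uniqueness argument rather than a direct monotonicity. Concretely, the parallel chain must be built so that the $\Emin{N_i}$-density hypothesis of Lemma \ref{weak-uq-ss} holds for the pair $(N_{i+1}, M_{i+1})$: every short tuple in $M_{i+1}$ must be $\Emin{N_i}$-equivalent to some tuple in $N_{i+1}$. This is feasible because $c(\Emin{N_i}) < \chi^\ast(\mu_0) \leq \lambda$ and stability in both $\mu_0$ and $\lambda$ permits one to absorb representatives of all relevant $\Emin{N_i}$-classes into $N_{i+1}$ at each step of the construction, but the bookkeeping (keeping the $N_i$ chain $\ltu$-increasing in $\K_{\mu_0}$ while simultaneously arranging the $\Emin$-density and the $\chi^\ast(\mu_0)$-saturation of $M_i$ over $N_i$) is where the real work lies.
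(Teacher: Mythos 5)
Your proposal does not prove the statement it was supposed to prove. The statement in question is Fact~\ref{op-equiv-fact}, the equivalence between stability of $\K$ and the failure of the $\LS(\K)$-order property; this is a cited fact from \cite[4.13]{sv-infinitary-stability-afml} and is not proved in this paper at all. What you have written instead is a proof of Theorem~\ref{stab-spec-forward} (that $\mu_1^{<\clcwk(\K_\lambda,\ltu)} = \mu_1$ for every stability cardinal $\mu_1 \ge \mu_0$). Your proof even \emph{invokes} Fact~\ref{op-equiv-fact} in the second paragraph as a black box, so it certainly cannot be a proof of that fact without circularity.

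For what it is worth, as a proof of Theorem~\ref{stab-spec-forward} your argument is essentially the same as the paper's: reduce to regular $\alpha < \lambda^+$ with $\mu_1^\alpha > \mu_1$; dispose of $\alpha \ge \mu_0^+$ via Facts~\ref{splitting-card} and~\ref{splitting-lc}; for $\alpha < \mu_0^+$, get $\alpha \in \bclcwk(\isssp{\mu_0}(\K_{\mu_0}), \ltu)$ from Lemma~\ref{shelah-fact} (using Fact~\ref{op-equiv-fact} to rule out the order property), transfer up to $\isf{\mu_0}(\K_\lambda)$, and finish with Lemma~\ref{canon-lem} backed by the weak uniqueness established in Lemma~\ref{weak-uq-forking}. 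Your third paragraph, rightly flagging the bookkeeping in the transfer step (arranging $\Emin{N_i}$-density and $\chi^\ast(\mu_0)$-saturation simultaneously along an $\ltu$-increasing $\mu_0$-chain), corresponds to what the paper compresses into the clause ``As in \cite[\S4]{ss-tame-jsl}, this implies \dots''. But none of this answers the question that was asked: you need to either give an independent proof of the stability/order-property equivalence (via a tree-construction and counting argument in the unstable direction, and an extraction-of-indiscernibles argument in the other), or, if the intent was merely to report how the paper handles Fact~\ref{op-equiv-fact}, note that it is taken as a black box from \cite{sv-infinitary-stability-afml} and not reproved.
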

\begin{proof}[Proof of Theorem \ref{stab-spec-forward}]
  We prove that for any regular cardinal $\alpha < \lambda^+$, if $\mu_1^\alpha > \mu_1$ then $\alpha \in \bclcwk (\K_\lambda, \ltu)$. This suffices because the least cardinal $\alpha$ such that $\mu_1^\alpha > \mu_1$ is regular.

  Note that by definition $\bclcwk (\K_\lambda, \ltu)$ is an end segment of regular cardinals. Note also that by Lemma \ref{weak-uq-forking}, $\is := \isf{\mu_0} (\K_\lambda)$ has weak uniqueness and thus we can use the results from Section \ref{cont-sec} also on $\is$.

  By Fact \ref{splitting-card} and \ref{splitting-lc}, $\mu_0^+ \in \bclc (\K_\lambda, \ltu)$. Therefore we may assume that $\alpha < \mu_0^+$.

  By Fact \ref{op-equiv-fact}, $\K$ does not have the $\LS (\K)$-order property. By Lemma \ref{shelah-fact}, $\alpha \in \bclcwk (\isssp{\mu_0} (\K_{\mu_0}), \ltu)$. As in \cite[\S4]{ss-tame-jsl}, this implies that $\alpha \in \bclcwk (\isf{\mu_0} (\K_\lambda), \ltu)$. But by Lemma \ref{canon-lem}, this means that $\alpha \in \bclcwk (\K_\lambda, \ltu)$.
\end{proof}

\section{Stability theory assuming continuity of splitting}\label{cont-sec-thy}

In this section, we will assume that splitting has the weak continuity property studied in Section \ref{cont-sec}:

\begin{defin}\label{weak-cont-def}
  For $\K$ an AEC with a monster model, we say that \emph{splitting has weak continuity} if for any $\mu \in \Stab (\K)$, $\clcc (\K_\mu, \ltu) = \aleph_0$.
\end{defin}

Recall that Theorem \ref{loc-implies-cont} shows that splitting has weak continuity under certain locality hypotheses. In particular, this holds in any class from homogeneous model theory and any universal class.

Assuming continuity and tameness, we have that $\uchi (\K)$ is an end-segment of regular cardinals (see Corollary \ref{locality-cor}). Therefore $\chi (\K)$ is simply the minimal cardinal in $\uchi (\K)$. We have the following characterization of $\chi (\K)$:

\begin{thm}\label{concl-thm}
  Let $\K$ be a stable $\LS (\K)$-tame AEC with a monster model.

  If splitting has weak continuity, then $\chi (\K)$ is the maximal cardinal $\chi$ such that for any $\mu \ge \LS (\K)$, if $\K$ is stable in $\mu$ then $\mu = \mu^{<\chi}$.
\end{thm}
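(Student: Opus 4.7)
I would split the claim into two pieces: (i) for every stability cardinal $\mu$, $\mu = \mu^{<\chi(\K)}$; and (ii) there is a stability cardinal $\mu_\ast$ with $\mu_\ast^{\chi(\K)} > \mu_\ast$, which rules out every $\chi > \chi(\K)$ from the property. Together these say that $\chi(\K)$ is exactly the maximum such $\chi$.

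The first step is to cash in the weak continuity hypothesis. By Corollary \ref{locality-cor}, weak continuity yields $\bclc(\K_\mu, \ltu) = \bclcwk(\K_\mu, \ltu)$ for every stability cardinal $\mu$, and this common set is an end segment of regular cardinals. Combining this with the monotonicity $\bclc(\K_\mu, \ltu) \subseteq \bclc(\K_\lambda, \ltu)$ for $\mu < \lambda$ in $\Stab(\K)$ (the unlabeled Fact immediately before Definition \ref{chi-def}) and the definition of $\lambda'(\K)$, it follows that $\bclc(\K_\lambda, \ltu) = \uchi(\K)$ for every stability $\lambda \ge \lambda'(\K)$; in particular, $\clcwk(\K_\lambda, \ltu) = \chi(\K)$ for every such $\lambda$.

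For (i), set $\mu_0 := \lambda(\K)$. Using that $\Stab(\K)$ is unbounded (iterate Fact \ref{stab-spec-facts}(\ref{stab-spec-2})), choose a stability cardinal $\lambda$ with $\lambda \ge \lambda'(\K)$ and $\lambda > \chi^\ast(\mu_0)$. Every stability cardinal $\mu_1$ satisfies $\mu_1 \ge \mu_0$, so Theorem \ref{stab-spec-forward} applied to $(\mu_0, \lambda, \mu_1)$ gives $\mu_1^{<\clcwk(\K_\lambda, \ltu)} = \mu_1$, which by the previous paragraph rewrites as $\mu_1 = \mu_1^{<\chi(\K)}$. For (ii), since $\chi(\K) \in \uchi(\K)$, fix a stability cardinal $\mu_0$ with $\chi(\K) \in \bclc(\K_{\mu_0}, \ltu)$. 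Using unboundedness of $\Stab(\K)$, build a strictly increasing sequence $\seq{\mu_i : i < \chi(\K)}$ of stability cardinals with $\mu_i > \mu_0$ for all $i$. Since $\cf(\chi(\K)) = \chi(\K) \in \bclc(\K_{\mu_0}, \ltu)$, Fact \ref{stab-spec-facts}(\ref{stab-spec-4}) ensures that $\mu_\ast := \sup_{i < \chi(\K)} \mu_i \in \Stab(\K)$, and by construction $\cf(\mu_\ast) = \chi(\K)$. By König's theorem, $\mu_\ast^{\chi(\K)} > \mu_\ast$, so for any cardinal $\chi > \chi(\K)$ we have $\mu_\ast^{<\chi} \ge \mu_\ast^{\chi(\K)} > \mu_\ast$, violating the property for this $\chi$.

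There is no single hard step; all the heavy lifting has been packaged in Theorem \ref{stab-spec-forward}, Fact \ref{stab-spec-facts}(\ref{stab-spec-4}) and Corollary \ref{locality-cor}. The one point that needs care is the bookkeeping in the first paragraph: under weak continuity, the invariants $\clc$ and $\clcwk$ collapse and both equal $\chi(\K)$ at every sufficiently large stability cardinal, which is precisely what lets Theorem \ref{stab-spec-forward} translate directly into the desired cardinal-arithmetic equality $\mu = \mu^{<\chi(\K)}$.
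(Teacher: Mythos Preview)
Your proof is correct and follows the same two-step outline as the paper: Theorem \ref{stab-spec-forward} together with Corollary \ref{locality-cor} for part (i), and constructing a stability cardinal of cofinality $\chi(\K)$ for part (ii). The paper's witness in (ii) is the explicit $\mu := \beth_{\chi(\K)}(\lambda'(\K))$, which is the same idea as your abstract increasing sequence of stability cardinals (and incidentally avoids the small notational clash in your argument where the fixed $\mu_0$ collides with the zeroth term of $\seq{\mu_i : i < \chi(\K)}$).
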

\begin{proof}
First, let $\mu \ge \LS (\K)$ be a stability cardinal. By Theorem \ref{stab-spec-forward} (recalling Corollary \ref{locality-cor}), $\mu^{<\chi (\K)} = \mu$.

Conversely, consider the cardinal $\mu := \beth_{\chi (\K)} (\lambda' (\K))$. By Fact \ref{stab-spec-facts}, $\K$ is stable in $\mu$. However $\cf{\mu} = \chi (\K)$ so $\mu^{\chi (\K)} > \mu$. In other words, there does not exist a cardinal $\chi > \chi (\K)$ such that $\mu^{<\chi} = \mu$.
\end{proof}

Still assuming continuity, we deduce an improved bound on $\chi (\K)$ (compared to Remark \ref{chi-rmk}) and an explicit bound on $\lambda' (\K)$:

\begin{thm}\label{locality-card-bounds}
  Let $\K$ be a stable $\LS (\K)$-tame AEC with a monster model and assume that splitting has weak continuity.

  \begin{enumerate}
  \item $\chi (\K) \le \lambda (\K) < H_1$.
  \item $\lambda (\K) \le \lambda' (\K) < \hanf{\lambda (\K)} < \beth_{H_1}$.
  \end{enumerate}
\end{thm}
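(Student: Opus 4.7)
The plan is to use Theorem \ref{concl-thm} as the main engine, since it converts the locality question into a cardinal arithmetic question. For part 1, the bound $\lambda (\K) < H_1$ is just Fact \ref{stab-spec-facts}(\ref{stab-spec-1}). To get $\chi (\K) \le \lambda (\K)$ (improving on Remark \ref{chi-rmk}), apply Theorem \ref{concl-thm} with $\mu := \lambda (\K)$ to obtain $\lambda (\K) = \lambda (\K)^{<\chi (\K)}$. If $\chi (\K) > \lambda (\K)$, then (as $\chi (\K)$ is a cardinal) $\lambda (\K) < \chi (\K)$, so the supremum defining $\lambda (\K)^{<\chi (\K)}$ includes the term $\lambda (\K)^{\lambda (\K)} = 2^{\lambda (\K)} > \lambda (\K)$, a contradiction.

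For part 2, the inequality $\lambda (\K) \le \lambda' (\K)$ is immediate because $\lambda' (\K)$ is a stability cardinal and $\lambda (\K)$ is by definition the least. The outer bound $\hanf{\lambda (\K)} < \beth_{H_1}$ is pure cardinal arithmetic: $H_1 = \beth_{(2^{\LS (\K)})^+}$ is a strong limit and $\lambda (\K) < H_1$, so $(2^{\lambda (\K)})^+ < H_1$, whence $\hanf{\lambda (\K)} = \beth_{(2^{\lambda (\K)})^+} < \beth_{H_1}$.

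The real work is the middle bound $\lambda' (\K) < \hanf{\lambda (\K)}$. The strategy is to exhibit an explicit stability cardinal $\lambda$ in the interval $(\chi^\ast (\lambda (\K)), \hanf{\lambda (\K)})$ at which the $\bclc$-spectrum has already reached its eventual value. A convenient choice is $\lambda := 2^{\chi^\ast (\lambda (\K))}$: by Definition \ref{chi-ast-def}, $\chi^\ast (\lambda (\K)) < \hanf{\lambda (\K)}$, and because $\hanf{\lambda (\K)}$ is a strong limit we also have $\lambda < \hanf{\lambda (\K)}$; furthermore $\lambda^{\lambda (\K)} = \lambda$ so Fact \ref{stab-spec-facts}(\ref{stab-spec-2}) shows $\lambda \in \Stab (\K)$. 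Now apply Theorem \ref{stab-spec-forward} with $\mu_0 := \lambda (\K)$, this $\lambda$, and an arbitrary stability cardinal $\mu_1 \ge \lambda (\K)$: it yields $\mu_1 = \mu_1^{<\clcwk (\K_\lambda, \ltu)}$. Weak continuity together with Corollary \ref{locality-cor} gives $\clcwk (\K_\lambda, \ltu) = \clc (\K_\lambda, \ltu)$, so the maximality clause in Theorem \ref{concl-thm} forces $\clc (\K_\lambda, \ltu) \le \chi (\K)$. The reverse inequality $\clc (\K_\lambda, \ltu) \ge \chi (\K)$ is immediate because $\bclc (\K_\lambda, \ltu) \subseteq \uchi (\K)$ by definition. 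Hence $\bclc (\K_\mu, \ltu) \subseteq \bclc (\K_\lambda, \ltu)$ for every stability cardinal $\mu$, so $\lambda' (\K) \le \lambda < \hanf{\lambda (\K)}$.

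The main obstacle is locating the witness $\lambda$: it must simultaneously be large enough to trigger Theorem \ref{stab-spec-forward} (so $\lambda > \chi^\ast (\lambda (\K))$) and confined strictly below $\hanf{\lambda (\K)}$, and one must verify it is a stability cardinal. The choice $\lambda = 2^{\chi^\ast (\lambda (\K))}$ threads these constraints using that $\hanf{\lambda (\K)}$ is a strong limit and using the cardinal-arithmetic stability transfer Fact \ref{stab-spec-facts}(\ref{stab-spec-2}).
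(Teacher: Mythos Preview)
Your proof is correct and follows essentially the same route as the paper's: both parts use Theorem~\ref{concl-thm} as the engine, and for the bound on $\lambda'(\K)$ both pick a stability cardinal just above $\chi^\ast(\lambda(\K))$ and invoke Theorem~\ref{stab-spec-forward} to force $\clc(\K_\lambda,\ltu)=\chi(\K)$. The only cosmetic differences are that the paper takes $\lambda$ to be the \emph{least} stability cardinal above $\chi^\ast(\lambda(\K))$ rather than your explicit $2^{\chi^\ast(\lambda(\K))}$, and that you spell out the passage from $\clcwk$ to $\clc$ via Corollary~\ref{locality-cor}, which the paper leaves implicit.
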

\begin{proof} \
  \begin{enumerate}
  \item That $\lambda (\K)< H_1$ is Fact \ref{stab-spec-facts}. Now by Theorem \ref{concl-thm}, $\lambda (\K)^{<\chi (\K)} = \lambda (\K)$ and hence $\chi (\K) \le \lambda (\K)$.
  \item Let $\lambda'$ be the least stability cardinal above $\chi^\ast (\lambda (\K))$ (see Definition \ref{chi-ast-def}). We have that $\lambda' < \hanf{\lambda (\K)}$. We claim that $\lambda' (\K) \le \lambda'$. Indeed by Theorem \ref{stab-spec-forward}, for any stability cardinal $\mu$, we have that $\mu^{<\clc (\K_{\lambda'}, \ltu)} = \mu$. We know that $\chi (\K)$ is the maximal cardinal with that property, but on the other hand we have that $\chi (\K) \le \clc (\K_{\lambda'}, \ltu)$ by definition. We conclude that $\chi (\K) = \clc (\K_{\lambda'}, \ltu)$, as desired.
  \end{enumerate}
\end{proof}

Theorem \ref{locality-card-bounds} together with Corollary \ref{ss-cor} partially answers \cite[1.8]{gv-superstability-jsl}, which asked whether the least $\mu$ such that $\K$ is $\mu$-superstable must satisfy $\mu < H_1$. We know now that (assuming continuity of splitting) $\mu \le \lambda' (\K) < \beth_{H_1}$, so there is a Hanf number for superstability but whether it is $H_1$ (rather than $\beth_{H_1}$) remains open.

We also obtain an analog of Corollary \ref{non-zfc-stab-spec}: 

\begin{cor}\label{non-zfc-stab-spec-2}
  Let $\K$ be an $\LS (\K)$-tame AEC with a monster model and assume that splitting has weak continuity. For any $\mu \ge \lambda' (\K) + \theta (\lambda (\K))$, $\K$ is stable in $\mu$ if and only if $\mu = \mu^{<\chi (\K)}$.
\end{cor}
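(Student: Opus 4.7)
\textbf{Plan for the proof of Corollary \ref{non-zfc-stab-spec-2}.}

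The plan is to prove each direction separately, essentially as a continuity-enhanced refinement of Corollary \ref{non-zfc-stab-spec}. The forward direction is immediate from Theorem \ref{concl-thm}: that theorem exhibits $\chi (\K)$ as the maximal cardinal $\chi$ such that $\nu = \nu^{<\chi}$ for every stability cardinal $\nu$, so in particular $\mu = \mu^{<\chi (\K)}$ whenever $\K$ is stable in $\mu$.

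For the converse, assume $\mu = \mu^{<\chi (\K)}$ and $\mu \ge \lambda' (\K) + \theta (\lambda (\K))$. First I would observe that $\cf{\mu} \ge \chi (\K)$: otherwise $\mu^{\cf{\mu}} \le \mu^{<\chi (\K)} = \mu$, contradicting König's theorem. Under weak continuity of splitting, Corollary \ref{locality-cor} tells us that $\uchi (\K)$ is an end segment of regular cardinals whose minimum is $\chi (\K)$, hence $\cf{\mu} \in \uchi (\K)$. Since $\mu \ge \theta (\lambda (\K))$, $\mu$ is almost $\lambda (\K)$-closed by Definition \ref{theta-def}, and Lemma \ref{closed-lem} gives two cases. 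Either $\mu = \mu^{\lambda (\K)}$ and $\K$ is already stable in $\mu$, and we are done; or $\K$ is stable in unboundedly many cardinals below $\mu$. In this second case, pick a strictly increasing sequence $\seq{\mu_i : i < \cf{\mu}}$ of stability cardinals cofinal in $\mu$ with $\mu_0 \ge \lambda' (\K)$. By the definition of $\lambda' (\K)$ and monotonicity of $\bclc$ along stability cardinals, $\uchi (\K) = \bclc (\K_{\lambda' (\K)}, \ltu) \subseteq \bclc (\K_{\mu_0}, \ltu)$, so $\cf{\mu} \in \bclc (\K_{\mu_0}, \ltu)$, and Fact \ref{stab-spec-facts}(\ref{stab-spec-4}) yields $\mu \in \Stab (\K)$.

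The point that makes the $\chi_0^+$ summand from Corollary \ref{non-zfc-stab-spec} unnecessary here is precisely the invocation of weak continuity: the membership $\cf{\mu} \in \uchi (\K)$ is derived purely from the cardinal-arithmetic hypothesis $\mu = \mu^{<\chi (\K)}$ via König, so Theorem \ref{key-thm} (the source of the $\chi_0$ bound) is never applied at $\mu$, and only Fact \ref{stab-spec-facts}(\ref{stab-spec-4}) is needed at the end. The main (mild) obstacle is checking the successor-versus-limit cofinality bookkeeping in the König step, but this is uniform once one notes that the inequality $\cf{\mu} \ge \chi (\K)$ is automatic for successor $\mu$ (since $\chi (\K) \le \lambda (\K) \le \mu$ by Theorem \ref{locality-card-bounds}) and genuinely needed only at singular $\mu$.
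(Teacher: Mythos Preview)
Your proof is correct and follows essentially the same route as the paper's own argument: the forward direction is Theorem \ref{concl-thm}, and the backward direction combines the K\"onig observation $\cf{\mu} \ge \chi (\K)$ with the case split of Lemma \ref{closed-lem} and Fact \ref{stab-spec-facts}(\ref{stab-spec-4}). The paper compresses all of this into a one-line citation of Theorem \ref{concl-thm}, Fact \ref{stab-spec-facts}, and the definition of $\theta (\lambda (\K))$, but the unpacking you give is exactly what is intended; your final paragraph on the successor case is harmless but unnecessary, since the K\"onig argument already handles successor and limit $\mu$ uniformly.
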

\begin{proof}
  The left to right direction follows from Theorem \ref{concl-thm} and the right to left direction is by Fact \ref{stab-spec-facts} and the definition of $\theta (\lambda (\K))$ (recalling that $\mu = \mu^{<\chi (\K)}$ implies that $\cf{\mu} \ge \chi (\K)$).
\end{proof}

We emphasize that for the right to left directions of Corollary \ref{non-zfc-stab-spec} to be nontrivial, we need $\theta (\lambda (\K)) < \infty$, which holds under various set-theoretic hypotheses by Fact \ref{card-arith-fact}. This is implicit in \cite[\S5]{ss-tame-jsl}. The left to right direction is new and does not need the boundedness of $\theta (\lambda (\K))$ (Theorem \ref{concl-thm}).

\subsection{On the uniqueness of limit models}

It was shown in \cite{limit-strictly-stable-v3} that continuity of splitting implies a nice local behavior of limit models in stable AECs:

\begin{fact}[Theorem 1 in \cite{limit-strictly-stable-v3}]\label{lim-uq-fact}
  Let $\K$ be an AEC and let $\mu \ge \LS (\K)$. Assume that $\K_\mu$ has amalgamation, joint embedding, no maximal models, and is stable in $\mu$. If:

  \begin{enumerate}
  \item $\delta \in \bclc (\K_{\mu}, \ltu) \cap \mu^+$.
  \item $\clcc (\K_{\mu}, \ltu) = \aleph_0$.
  \item $\K$ has $(\mu, \delta)$-symmetry.
  \end{enumerate}

  Then whenever $M_0, M_1, M_2 \in \K_\mu$ are such that both $M_1$ and $M_2$ are $(\mu, \ge \delta)$-limit over $M_0$ (recall Section \ref{univ-sec}), we have that $M_1 \cong_{M_0} M_2$.
\end{fact}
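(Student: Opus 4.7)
The plan is to reduce the problem to comparing $(\mu, \delta_1)$-limit and $(\mu, \delta_2)$-limit models over $M_0$ for regular $\delta_1, \delta_2 \in [\delta, \mu^+)$, and then carry out a tower-style argument in the style of VanDieren. First, I would invoke the classical fact that two limit models of the same cofinality over the same base are isomorphic over $M_0$ via a back-and-forth using only universality of each successor step. Thus it suffices to show that for any regular $\delta_1, \delta_2 \in [\delta, \mu^+)$ a $(\mu, \delta_1)$-limit is isomorphic over $M_0$ to a $(\mu, \delta_2)$-limit.

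For the heart of the argument I would fix a $(\mu, \delta_2)$-limit model $M^\ast$ over $M_0$ with resolution $\seq{N_i^\ast : i \le \delta_2}$ and build inside $M^\ast$ a new resolution $\seq{N_i : i \le \delta_1}$ with $N_0 = M_0$, each $N_{i+1}$ universal over $N_i$, and $\bigcup_{i < \delta_1} N_i = M^\ast$. At successor stages $N_{i+1}$ is produced from stability in $\mu$; at limit stages below $\delta_1$ we simply take unions. The delicate point is surjectivity: we must show the new tower exhausts $M^\ast$ by stage $\delta_1$.

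For this I would adapt VanDieren's reduced-tower machinery. Given $a \in M^\ast$, the local character hypothesis $\delta \in \bclc(\K_\mu, \ltu)$ provides some stage $j < \delta_1$ such that $\gtp(a / N_i)$ does not $\mu$-split over $N_j$ for all $i \ge j$, while the continuity hypothesis $\clcc(\K_\mu, \ltu) = \aleph_0$ guarantees that such non-splitting is preserved at limit stages of the new tower, so the ``witness'' stage for $a$ is well-defined. The tower $\seq{N_i : i \le \delta_1}$ is then engineered to be reduced, i.e. each element inserted at stage $i+1$ is independent from later stages over $N_i$ in the non-$\mu$-splitting sense.

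The main obstacle, and the step where $(\mu, \delta)$-symmetry is indispensable, is proving continuity of reduced towers at stage $\delta_1$: without symmetry one can only show that every element of $M^\ast$ is eventually captured by the original long tower, but a new element could be ``delayed'' past $\delta_1$ in the shorter one. Symmetry interchanges the roles of the realizing element and the independent parameter set for non-$\mu$-splitting, which forces every $a \in M^\ast$ to already lie in some $N_i$ with $i < \delta_1$. Hence $N_{\delta_1} = M^\ast$, so $M^\ast$ is simultaneously $(\mu, \delta_1)$-limit and $(\mu, \delta_2)$-limit over $M_0$; combining with uniqueness for equal cofinalities yields $M_1 \cong_{M_0} M_2$.
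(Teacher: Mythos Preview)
The paper does not prove this statement: it is recorded as a \emph{Fact} and attributed to \cite{limit-strictly-stable-v3} (VanDieren's work on limit models in strictly stable AECs), so there is no ``paper's own proof'' to compare against. Your outline is essentially the argument of that cited reference: reduce to comparing limits of two regular cofinalities $\ge \delta$, invoke the reduced-tower machinery, use $\delta \in \bclc(\K_\mu,\ltu)$ and continuity of splitting to control non-splitting along the tower, and use $(\mu,\delta)$-symmetry to obtain continuity of reduced towers at limit stages, which yields the uniqueness.

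As a sketch this is accurate in its identification of the key ingredients, but you have compressed the technically hardest part. The phrase ``engineer the tower to be reduced'' hides a nontrivial density/direct-limit argument: one does not simply build a single reduced tower by recursion, but rather shows that reduced towers are dense among all towers (via a direct-limit construction) and then proves as a separate theorem that reduced towers are continuous. Also, your description of the symmetry step---``symmetry interchanges the roles of the realizing element and the independent parameter set\ldots which forces every $a \in M^\ast$ to already lie in some $N_i$''---is morally right but would need to be expanded into the actual contradiction argument: assuming a reduced tower is discontinuous, one extracts an element witnessing the gap, applies symmetry to swap it with a tower element, and derives a violation of reducedness. If you intend to write this out in full you should consult VanDieren's original papers for the precise definitions of tower, reduced tower, and the density lemma.
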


We will not need to use the definition of $(\mu, \delta)$-symmetry, only the following fact, which combines \cite[18]{limit-strictly-stable-v3} and the proof of \cite[2]{vandieren-symmetry-apal}.

\begin{fact}\label{sym-fact}
  Let $\K$ be an AEC and let $\mu \ge \LS (\K)$. Assume that $\K_\mu$ has amalgamation, joint embedding, no maximal models, and is stable in $\mu$. Let $\delta < \mu^+$ be a regular cardinal. If whenever $\seq{M_i : i < \delta}$ is an increasing chain of saturated models in $\K_{\mu^+}$ we have that $\bigcup_{i < \delta} M_i$ is saturated, then $\K$ has $(\mu, \delta)$-symmetry.
\end{fact}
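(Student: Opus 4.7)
The plan is to prove the contrapositive. Suppose $(\mu,\delta)$-symmetry fails; I will construct an increasing chain $\seq{M_i : i < \delta}$ of saturated members of $\K_{\mu^+}$ whose union $M_\delta := \bigcup_{i < \delta} M_i$ is \emph{not} saturated, thereby contradicting the hypothesis. This adapts VanDieren's argument from \cite{vandieren-symmetry-apal} (which shows that failure of symmetry destroys saturation of unions of long saturated chains) to the single cofinality $\delta$, following the localization in Theorem 18 of \cite{limit-strictly-stable-v3}.

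First I would unpack the failure of $(\mu,\delta)$-symmetry. Without spelling out the full definition, the failure yields witnessing data: a base $N_0 \in \K_\mu$, elements $a$ and $b$, and an increasing $\ltu$-chain of models over $N_0$ of cofinality $\delta$ witnessing that a certain non-$\mu$-splitting configuration involving $a$ and $b$ cannot be swapped. Using stability in $\mu$, amalgamation, joint embedding, and no maximal models in $\K_\mu$, I would build an increasing chain $\seq{M_i : i < \delta}$ in $\K_{\mu^+}$ in which each $M_i$ is saturated (such saturated models exist in $\K_{\mu^+}$ as limit models, and can be arranged in a chain whose stages encode, step by step, the asymmetric witnesses above $N_0$).

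The final step is to isolate an explicit Galois type $p \in \gS(N)$ over some $N \lea M_\delta$ with $\|N\| \le \mu$, constructed from the asymmetric witness data, such that $p$ is realized in no $M_i$ and hence not in $M_\delta$. The key point is that realizing $p$ in $M_\delta$ would force exactly the symmetric swap of $a$ and $b$ over $N_0$ ruled out by the failure of $(\mu,\delta)$-symmetry; saturation of each $M_i$ only secures realizations of types whose obstructions live at stages $< i$, and the cofinality-$\delta$ shape of the chain matches the cofinality-$\delta$ shape of the obstruction, so no realization ever appears. This shows $M_\delta$ is not saturated in $\K_{\mu^+}$, contradicting the hypothesis.

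The main obstacle I expect is the bookkeeping that ties the cofinality $\delta$ of the chain in the hypothesis to the cofinality of the obstruction produced by the symmetry failure. Concretely, one must verify that the witnessing sequence extracted from the failure of $(\mu,\delta)$-symmetry can be woven into a construction of $\mu^+$-sized saturated models so that the resulting chain omits a $\mu$-sized type precisely at stage $\delta$ — and that the argument neither requires a chain of a different cofinality nor produces an obstruction that a chain of the right cofinality would dissolve. This matching of the two cofinalities is exactly the refinement of VanDieren's argument carried out in \cite{limit-strictly-stable-v3}, and it is what distinguishes the localized statement from the global $\mu$-symmetry result.
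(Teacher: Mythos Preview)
The paper does not give its own proof of this statement: it is recorded as a Fact, with the one-line justification that it ``combines \cite[18]{limit-strictly-stable-v3} and the proof of \cite[2]{vandieren-symmetry-apal}.'' Your proposal identifies exactly these two ingredients and sketches precisely that combination --- VanDieren's contrapositive construction of a non-saturated union from a symmetry failure, localized to the single cofinality $\delta$ as in \cite{limit-strictly-stable-v3} --- so your approach is correct and coincides with what the paper intends.
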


We can conclude that in tame stable AECs with weak continuity of splitting, any two big-enough $(\ge \chi (\K))$-limits are isomorphic.

\begin{thm}\label{uq-lim-thm}
  Let $\K$ be an $\LS (\K)$-tame AEC with a monster model. Assume that splitting has weak continuity.

  Let $\chi_0 < H_1$ be as given by Fact \ref{bv-sat}. Then for any stability cardinal $\mu \ge \lambda' (\K) + \chi_0$ and any $M_0, M_1, M_2 \in \K_{\mu}$, if both $M_1$ and $M_2$ are $(\mu, \ge \chi (\K))$-limit over $M_0$, then $M_1 \cong_{M_0} M_2$.
\end{thm}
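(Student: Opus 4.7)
The plan is to reduce the theorem to Fact \ref{lim-uq-fact} applied with $\delta := \chi (\K)$. By Theorem \ref{locality-card-bounds}, $\chi (\K) \le \lambda (\K) \le \lambda' (\K) \le \mu$, so $\delta < \mu^+$, and the hypothesis that $M_1$ and $M_2$ are $(\mu, \ge \chi (\K))$-limit over $M_0$ matches exactly the conclusion-side hypothesis of Fact \ref{lim-uq-fact} at $\delta$. Once the three numbered hypotheses of that fact are verified for our $\mu$ and $\delta$, we immediately obtain $M_1 \cong_{M_0} M_2$.

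Hypotheses (1) and (2) of Fact \ref{lim-uq-fact} are essentially packaged into the ambient assumptions. Weak continuity (Definition \ref{weak-cont-def}) applied at the stability cardinal $\mu$ gives $\clcc (\K_\mu, \ltu) = \aleph_0$, which is hypothesis (2). For hypothesis (1), the monotonicity of $\bclc$ along stability cardinals displayed just above Definition \ref{chi-def} and the defining property of $\lambda' (\K)$ combine to give $\uchi (\K) = \bclc (\K_\mu, \ltu)$ for our $\mu \ge \lambda' (\K)$; on the other hand, Corollary \ref{locality-cor} applied with weak continuity forces $\bclc (\K_\mu, \ltu)$ to be the end-segment of regular cardinals with minimum $\chi (\K) = \delta$, so $\delta \in \bclc (\K_\mu, \ltu) \cap \mu^+$.

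The main obstacle is hypothesis (3), that is $(\mu, \delta)$-symmetry. By Fact \ref{sym-fact} it suffices to show that every increasing chain $\seq{M_i : i < \delta}$ of saturated models in $\K_{\mu^+}$ has a saturated union. To produce this chain-saturation property I would invoke Fact \ref{bv-sat} with its ``$\mu$'' taken to be $\mu^+$ and its ``$\mu_0$'' taken to be $\mu$: stability of $\K$ in $\mu_0 = \mu$, the inequality $\mu^+ > \mu \ge \chi_0$, and the membership $\cf{\delta} = \delta \in \bclc (\K_\mu, \ltu)$ were all just established, and the remaining local-character condition on stability cardinals below $\mu^+$ is witnessed by $\mu$ itself as the stability cardinal immediately below $\mu^+$. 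The subtle point is that $\mu^+$ is a successor, so one must either read the ``unboundedly many stability cardinals below $\mu^+$'' clause accommodatingly or adapt the averages argument from \cite{bv-sat-v3} directly to the successor case; this is where the real technical content sits. Once the chain-saturation is in hand, Fact \ref{sym-fact} produces $(\mu, \delta)$-symmetry and Fact \ref{lim-uq-fact} completes the proof.
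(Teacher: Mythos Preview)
Your proof is correct and follows exactly the same route as the paper's: verify the hypotheses of Fact~\ref{lim-uq-fact} at $\delta = \chi(\K)$, obtaining $(\mu,\delta)$-symmetry via Fact~\ref{sym-fact} and chain-saturation in $\K_{\mu^+}$ via Fact~\ref{bv-sat}. The only point worth addressing is your hesitation about the successor case: it is not a genuine obstacle. The hypothesis ``$\K$ is stable in unboundedly many cardinals below $\mu^+$'' is satisfied by the single cardinal $\mu$, since $\{\mu\}$ is cofinal in $\mu^+$ and $\mu$ is a stability cardinal by assumption; no adaptation of the averages argument is needed, and the paper applies Fact~\ref{bv-sat} directly without further comment.
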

\begin{proof}
  By Fact \ref{bv-sat}, we have that the union of an increasing chain of saturated models in $\K_{\mu^+}$ of length $\chi (\K)$ is saturated. Therefore by Fact \ref{sym-fact}, $\K$ has $(\mu, \chi (\K))$-symmetry. Now apply Fact \ref{lim-uq-fact}.
\end{proof}

We deduce the following improvement on Theorem \ref{sat-existence} in case splitting has weak continuity:

\begin{cor}\label{sat-exist-cor}
  Let $\K$ be an $\LS (\K)$-tame AEC with a monster model. Assume that splitting has weak continuity.

  Let $\chi_0 < H_1$ be as given by Fact \ref{bv-sat}. For any stability cardinal $\mu \ge \lambda' (\K) + \chi_0$, there is a saturated model of cardinality $\mu$.
\end{cor}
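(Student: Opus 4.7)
The plan is to deduce the corollary from Theorem \ref{sat-existence} by verifying its hypothesis (stability in unboundedly many cardinals below $\mu$) under the weak continuity assumption, handling the successor case separately via a direct limit-model construction.

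First I would dispose of the case where $\mu$ is a successor cardinal, say $\mu = \nu^+$. Since $\mu$ is a stability cardinal with $\mu \ge \lambda'(\K) + \chi_0$, I can iterate universal extensions of size $\mu$ to build a $(\mu,\mu)$-limit model $M = \bigcup_{i<\mu} M_i$. Any $A \subseteq |M|$ with $|A| < \mu$ has $|A| \le \nu < \mu = \cf(\mu)$, so $A \subseteq |M_i|$ for some $i<\mu$, and any type over $A$ is realized in $M_{i+1}$ by universality. Thus $M$ is saturated of cardinality $\mu$.

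Next I would treat the case of $\mu$ a limit cardinal. By Theorem \ref{concl-thm}, $\mu = \mu^{<\chi(\K)}$ and hence $\cf(\mu) \ge \chi(\K)$. The idea is to produce a strictly increasing chain of stability cardinals cofinal in $\mu$, then invoke Theorem \ref{sat-existence}. Starting from $\lambda(\K)$, I use Fact \ref{stab-spec-facts}(\ref{stab-spec-3}) to take successor steps and Fact \ref{stab-spec-facts}(\ref{stab-spec-4}) together with weak continuity (which guarantees $\bclc(\K_{\lambda(\K)}, \ltu)$ is an end segment containing $\chi(\K)$) to take limit steps of cofinality at least $\chi(\K)$. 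Given an arbitrary $\alpha < \mu$, I iterate successors past $\alpha$ (possibly restarting from a higher stability cardinal given by Fact \ref{stab-spec-facts}(\ref{stab-spec-2}) applied to $\mu^\gamma = \mu$ for $\gamma < \chi(\K)$) and close under limits of cofinality $\ge \chi(\K)$ to produce a stability cardinal in $(\alpha, \mu)$. Once unboundedly many stability cardinals below $\mu$ are produced, Theorem \ref{sat-existence} directly yields a saturated model of cardinality $\mu$.

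The main obstacle I expect is the limit case in the strictly stable regime ($\chi(\K) > \aleph_0$), where the stability spectrum can have gaps at ordinals of cofinality less than $\chi(\K)$ (e.g., $\lambda(\K)^{+\omega}$ need not be a stability cardinal). The delicate point is to show that, from the stability of $\mu$ itself, one can always find stability cardinals arbitrarily high below $\mu$; weak continuity is the essential tool here, as it converts Fact \ref{stab-spec-facts}(\ref{stab-spec-4}) into a robust preservation principle at any cofinality $\ge \chi(\K)$, matching the cofinality restriction $\cf(\mu) \ge \chi(\K)$ forced by Theorem \ref{concl-thm}.
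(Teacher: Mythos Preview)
Your successor case is fine, but your limit case has a genuine gap that you yourself identify and do not resolve. When $\chi(\K) > \aleph_0$, you cannot in general produce stability cardinals cofinal in $\mu$. Starting from any stability cardinal $\nu < \mu$, iterating Fact~\ref{stab-spec-facts}(\ref{stab-spec-3}) gets you $\nu^{+n}$ for all $n < \omega$, but the supremum $\nu^{+\omega}$ has cofinality $\omega \notin \uchi(\K)$, so Fact~\ref{stab-spec-facts}(\ref{stab-spec-4}) does not apply and you are stuck. Your proposed rescue via Fact~\ref{stab-spec-facts}(\ref{stab-spec-2}) does not help either: it yields stability only at cardinals $\lambda$ with $\lambda^{\lambda(\K)} = \lambda$, and there is no ZFC reason for such $\lambda$ to exist in every interval $(\alpha, \mu)$. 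Indeed, the whole point of Corollary~\ref{sat-exist-cor} (the paper says it is ``an improvement on Theorem~\ref{sat-existence}'') is precisely to drop the hypothesis of unboundedly many stability cardinals below $\mu$; your strategy of reducing back to Theorem~\ref{sat-existence} by verifying that hypothesis runs against the grain of the result.

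The paper's argument is completely different and much shorter: it observes that a $(\mu, \chi(\K))$-limit model exists (just from stability in $\mu$), and then invokes Theorem~\ref{uq-lim-thm} to conclude it is saturated. The mechanism is that uniqueness of $(\mu, \ge \chi(\K))$-limits forces the $(\mu, \chi(\K))$-limit to be isomorphic to the $(\mu, \mu_0^+)$-limit for every regular $\mu_0^+ \in [\chi(\K), \mu]$, and the latter is $\mu_0^+$-saturated by the usual cofinality argument. This route uses weak continuity only indirectly, through the symmetry and uniqueness machinery (Facts~\ref{sym-fact} and~\ref{lim-uq-fact}), and never needs to know anything about the stability spectrum below $\mu$.
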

\begin{proof}
  There is a $(\mu, \chi (\K))$-limit model of cardinality $\mu$, and it is saturated by Theorem \ref{uq-lim-thm}.
\end{proof}

In Fact \ref{bv-sat}, it is open whether Hypothesis (\ref{bv-sat-1}) can be removed. We aim to show that it can, assuming continuity of splitting and SCH. We first revisit an argument of VanDieren \cite{vandieren-chainsat-apal} to show that one can assume stability in $\lambda$ instead of stability in unboundedly many cardinals below $\lambda$.

\begin{lem}\label{chainsat-lem}
  Let $\K$ be an $\LS (\K)$-tame AEC with a monster model. Let $\mu > \LS (\K)$. Assume that $\K$ is stable in both $\LS (\K)$ and $\mu$. Let $\seq{M_i : i < \delta}$ be an increasing chain of $\mu$-saturated models. If $\cf{\delta} \in \bclc (\K_{\LS (\K)}, \ltu)$ and the $(\mu, \delta)$-limit model is saturated, then $\bigcup_{i < \delta} M_i$ is $\mu$-saturated.
\end{lem}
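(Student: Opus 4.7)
The plan is to verify the definition of $\mu$-saturation for $M := \bigcup_{i < \delta} M_i$: given any $N \lea M$ with $\|N\| < \mu$ and any $p \in \gS(N)$, exhibit a realization of $p$ in $M$. Without loss of generality $\delta = \theta := \cf{\delta}$, by passing to a cofinal subchain (this preserves both the union and the hypotheses). If $\|N\| < \theta$, cofinality forces $N \lea M_i$ for some $i < \theta$, and the $\mu$-saturation of $M_i$ realizes $p$ inside $M_i \lea M$; so the only nontrivial case is $\theta \le \|N\| < \mu$.

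In that case the goal is to construct a $(\mu, \theta)$-limit model $\bar{N}_\theta \lea M$ containing $N$ as a $\lea$-submodel. Since the isomorphism type of a $(\mu, \delta)$-limit depends only on $\cf{\delta} = \theta$, the hypothesis of the lemma yields that this $\bar{N}_\theta$ is saturated of size $\mu$; as $\|N\| < \mu$, the type $p$ is then realized in $\bar{N}_\theta \lea M$. Concretely, fix a cofinal increasing continuous sequence $\seq{i_j : j \le \theta}$ in $\delta$ and build $\seq{\bar{N}_j : j \le \theta}$ increasing continuous with $\bar{N}_j \in \K_\mu$, $\bar{N}_j \lea M_{i_j}$, $|N| \cap |M_{i_j}| \subseteq |\bar{N}_j|$ (so $N \lea \bar{N}_\theta$), and $\bar{N}_{j+1}$ universal over $\bar{N}_j$ in $\K_\mu$, the last clause making $\bar{N}_\theta$ a genuine $(\mu, \theta)$-limit.

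The principal obstacle is the universality clause at successor stages. A universal extension of $\bar{N}_j$ in $\K_\mu$ exists by stability in $\mu$, but since $\bar{N}_j$ itself has size $\mu$, the $\mu$-saturation of $M_{i_{j+1}}$ yields $\mu$-model-homogeneity only for $<\mu$-sized $\lea$-extensions and is a priori too weak to embed such a universal extension of $\bar{N}_j$ into $M_{i_{j+1}}$ over $\bar{N}_j$. My plan is to close this gap by exploiting the hypothesis $\cf{\delta} \in \bclc(\K_{\LS(\K)}, \ltu)$ together with stability in $\LS(\K)$: resolve $\bar{N}_j$ as an internal $\theta$-chain of $\LS(\K)$-sized submodels, inductively embed $\LS(\K)$-sized universal extensions into $M_{i_{j+1}}$ using its $\mu$-saturation, and glue the pieces using the local character at $\LS(\K)$ to ensure that the type data over the chain is coherent and lifts to a universal extension of $\bar{N}_j$ in $\K_\mu$ realized inside $M_{i_{j+1}}$. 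This gluing step, where $\cf{\delta} \in \bclc(\K_{\LS(\K)}, \ltu)$ is essential to control how the $\LS(\K)$-sized approximations align over a chain of length $\theta$, is where I expect the main technical work to lie.
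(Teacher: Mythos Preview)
Your reduction to $\delta = \theta := \cf{\delta}$ and the easy case $\|N\| < \theta$ are fine, and you have correctly located the real difficulty: one cannot in general find $\bar N_{j+1} \in \K_\mu$ universal over $\bar N_j$ with $\bar N_{j+1} \lea M_{i_{j+1}}$, because $\mu$-saturation of $M_{i_{j+1}}$ gives $\mu$-model-homogeneity only over $<\mu$-sized bases. However, your proposed fix does not close this gap. First, a $\theta$-chain of $\LS(\K)$-sized submodels has union of size at most $\theta + \LS(\K) < \mu$, so it cannot resolve a model $\bar N_j$ of size $\mu$; any resolution of $\bar N_j$ must have length $\mu$, and then the hypothesis $\theta \in \bclc(\K_{\LS(\K)}, \ltu)$ says nothing about it. Second, and more fundamentally, the locality cardinal $\bclc$ controls where a \emph{type} over a chain stops splitting; it gives no mechanism for manufacturing a $\mu$-sized universal extension of a $\mu$-sized model inside a merely $\mu$-saturated ambient model. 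That obstruction is genuine, and the paper does not overcome it---it sidesteps it.

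The paper's argument abandons the attempt to realize the $(\mu,\theta)$-limit as a $\lea$-chain inside $M_\delta$. Instead, one first extends $p$ to $q \in \gS(M_\delta)$ and uses $\theta \in \bclc(\K_{\LS(\K)}, \ltu)$ (together with tameness, via the forking calculus of \cite[\S4]{ss-tame-jsl}) to find $i < \delta$ and $M_0^0 \in \K_{\LS(\K)}$ with $M_0^0 \lea M_i$ such that $q$ does not $\LS(\K)$-split over $M_0^0$. One then fixes a small resolution $\seq{N_j : j \le \delta}$ of $N$ with $N_j \lea M_j$ and builds a \emph{directed system} $\seq{M_j^\ast, f_{j,k}}$ with $M_j^\ast \in \K_\mu$, $N_j \lea M_j^\ast \lea M_j$, $M_{j+1}^\ast$ universal over $f_{j,j+1}[M_j^\ast]$, and---this is the point---$f_{j,j+1}$ required to fix only $N_j$, not all of $M_j^\ast$. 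Saturation of $M_{j+1}$ suffices for this, since one is embedding over the small base $N_j$. The direct limit $M_\delta^\ast$ is then a $(\mu,\theta)$-limit, hence saturated by hypothesis, but it need not sit inside $M_\delta$. A realization $b^\ast \in M_\delta^\ast$ of (an extension of) $q$ pulls back along some $f_{i,\delta}$ to an element $b \in M_i^\ast \lea M_i$, and the uniqueness property of non-$\LS(\K)$-splitting over $M_0^0$ (using that $f_{i,\delta}$ fixes $N_0 \gea M_0^0$) is what forces $b$ to realize $p$. So the hypothesis $\theta \in \bclc(\K_{\LS(\K)}, \ltu)$ is used not to glue universal extensions, but to anchor the type $q$ over a small base so that realizations can be transported through the directed system.
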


Let us say a little bit about the argument. VanDieren \cite{vandieren-chainsat-apal} shows that superstability in $\lambda$ and $\mu := \lambda^+$ combined with the uniqueness of limit models in $\lambda^+$ implies that unions of chains of $\lambda^+$-saturated models are $\lambda^+$-saturated. One can use VanDieren's argument to prove that superstability in unboundedly many cardinals below $\mu$ implies that unions of chains of $\mu$-saturated models are $\mu$-saturated, and this generalizes to the stable case too. However the case that interests us here is when $\K$ is stable in $\mu$ and not necessarily in unboundedly many cardinals below (the reader can think of $\mu$ as being the successor of a singular cardinal of low cofinality). This is where tameness enters the picture: by assuming stability e.g.\ in $\LS (\K)$ as well as $\LS (\K)$-tameness, we can transfer the locality of splitting upward and the main idea of VanDieren's argument carries through (note that continuity of splitting is not needed). Still several details have to be provided, so a full proof is given here.

\begin{proof}[Proof of Lemma \ref{chainsat-lem}]
  For $M_0 \lea M \lea N$, let us say that $p \in \gS (N)$ \emph{does not fork over $(M_0, M)$} if $M$ is $\|M_0\|^+$-saturated over $M_0$ (recall Definition \ref{sat-def}) and $M_0 \in \K_{\LS (\K)}$. Say that $p$ does not fork over $M$ if there exists $M_0$ so that it does not fork over $(M_0, M)$.

  Without loss of generality, $\delta = \cf{\delta} < \mu$. Let $M_\delta := \bigcup_{i < \delta} M_i$. Let $N \lea M_\delta$ with $N \in \K_{<\mu}$. Let $p \in \gS (N)$. We want to see that $p$ is realized in $M_\delta$. We may assume without loss of generality that $M_i \in \K_\mu$ for all $i \le \delta$. Let $q \in \gS (M_\delta)$ be an extension of $p$.

  Since $\delta \in \bclc (\K_{\LS (\K)}, \ltu)$, using \cite[\S4]{ss-tame-jsl} there exists $i < \delta$ such that $q$ does not fork over $M_i$. This means there exists $M_i^0 \lea M_i$ such that $M_i^0 \in \K_{\LS (\K)}$ and $q$ does not fork over $(M_i^0, M_i)$. Without loss of generality, $i = 0$. Let $\mu_0 := \LS (\K) + \delta$. Build $\seq{N_i : i \le \delta}$ increasing continuous in $\K_{\mu_0}$ such that $M_0^0 \lea N_0$, $N \lea N_\delta$, and for all $i \le \delta$, $N_i \lea M_i$. Without loss of generality, $N = N_\delta$.

  We build an increasing continuous directed system $\seq{M_i^\ast, f_{i, j} : i \le j < \delta}$ such that for all $i < \delta$:

  \begin{enumerate}
  \item $M_i^\ast \in \K_\mu$.
  \item $N_i \lea M_i^\ast \lea M_i$.
  \item $f_{i, i + 1}$ fixes $N_i$.
  \item $M_{i + 1}^\ast$ is universal over $M_i^\ast$.
  \end{enumerate}

  This is possible. Take $M_0^\ast := M_0$. At $i$ limit, take $M_i^{\ast \ast}$ to be the a direct limit of the system fixing $N_i$ and let $g: M_i^{\ast \ast} \xrightarrow[N_i]{} M_i$ (remember that $M_i$ is saturated). Let $M_i^\ast := g[M_i^{\ast \ast}]$, and define the $f_{j, i}$'s accordingly. At successors, proceed similarly and define the $f_{i, j}$'s in the natural way.

  This is enough. Let $(M_\delta^\ast, f_{i, \delta})_{i < \delta}$ be a direct limit of the system extending $N_\delta$ (note: we do \emph{not} know that $M_\delta^\ast \lea M_\delta$). We have that $M_\delta^\ast$ is a $(\mu, \delta)$-limit model, hence is saturated. Now find a saturated $\sea \in \K_\mu$ containing $M_\delta \cup M_\delta^\ast$ and such that for each $i < \delta$, there exists $f_{i, \delta}^\ast$ an automorphism of $\sea$ extending $f_{i, \delta}$ such that $f_{i, \delta}^\ast [N_\delta] \lea M_\delta^\ast$. This is possible since $M_\delta^\ast$ is universal over $M_i^\ast$ for each $i < \delta$. Let $N^\ast \lea M_\delta^\ast$ be such that $N^\ast \in \K_{\mu_0}$ and $|N_\delta| \cup \bigcup_{i < \delta} |f_{i, \delta}^\ast[N_\delta]| \subseteq |N^\ast|$.

  \underline{Claim}: For any saturated $\widehat{M} \in \K_\mu$ with $M_\delta \lea \widehat{M}$, there exists $\hat{q} \in \gS (\widehat{M})$ extending $q$ and not forking over $(M_0^0, N_0)$.

  \underline{Proof of Claim}: We know that $M_0$ is saturated. Thus there exists $f: M_0 \cong_{N_0} \widehat{M}$. Let $\hat{q} := f (q \rest M_0)$. We have that $\hat{q} \in \gS (\widehat{M})$ and $\hat{q}$ does not fork over $(M_0^0, N_0)$. Further, $\hat{q} \rest N_0 = q \rest N_0$. By uniqueness of nonforking (see \cite[5.3]{ss-tame-jsl}), $\hat{q} \rest M = q$. $\dagger_{\text{Claim}}$

  By the claim, there exists $\hat{q} \in \gS (\sea)$ extending $q$ and not forking over $(M_0^0, N_0)$. Because $M_\delta^\ast$ is $(\mu_0^+, \mu)$-limit, there exists $M^{\ast \ast} \in \K_\mu$ saturated such that $N^\ast \lea M^{\ast \ast} \lea M_{\delta}^\ast$ and $M_{\delta}^\ast$ is universal over $M^{\ast\ast}$.

  Since $M_\delta^\ast$ is universal over $M^{\ast \ast}$, there is $b^\ast \in M_\delta^\ast$ realizing $\hat{q} \rest M^{\ast \ast}$. Fix $i < \delta$ and $b \in M_i^\ast$ such that $f_{i, \delta} (b) = b^\ast$. We claim that $b$ realizes $p$ (this is enough since by construction $M_i^\ast \lea M_i \lea M_\delta$). We show a stronger statement: $b$ realizes $\hat{q} \rest M'$, where $M' := (f_{i, \delta}^\ast)^{-1}[M^{\ast \ast}]$. This is stronger because $N^\ast \lea M^{\ast \ast}$ so by definition of $N^\ast$, $N \lea (f_{i, \delta}^{\ast})^{-1}[N^\ast] \lea M'$. Work inside $\sea$. Since $\hat{q}$ does not fork over $(M_0^0, N_0)$, also $\hat{q} \rest M^{\ast \ast } = \gtp (b^\ast / M^{\ast \ast})$ does not fork over $(M_0^0, N_0)$. Therefore $\gtp (b / M')$ does not fork over $(M_0^0, N_0)$. Moreover, $\gtp (b / N_0) = \gtp (b^\ast / N_0) = \hat{q} \rest N_0$, since $f_{i, \delta}$ fixes $N_0$. By uniqueness, $\gtp (b / M') = \hat{q} \rest M'$. In other words, $b$ realizes $\hat{q} \rest M'$, as desired.
\end{proof}
\begin{remark}
  It is enough to assume that amalgamation and the other structural properties hold only in $\K_{[\LS (\K), \mu]}$.
\end{remark}

We have arrived to the second main result of this section. Note that the second case below is already known (Fact \ref{bv-sat}), but the others are new.

\begin{thm}\label{chainsat-thm}
  Let $\K$ be an $\LS (\K)$-tame AEC with a monster model. Assume that splitting has weak continuity.

  Let $\chi_0 < H_1$ be as given by Fact \ref{bv-sat}. Let $\lambda > \lambda' (\K) + \chi_0$ and let $\seq{M_i : i < \delta}$ be an increasing chain of $\lambda$-saturated models. If $\cf{\delta} \ge \chi (\K)$, then $\bigcup_{i < \delta} M_i$ is $\lambda$-saturated provided that at least one of the following conditions hold:

  \begin{enumerate}
  \item $\K$ is stable in $\lambda$.
  \item $\K$ is stable in unboundedly many cardinals below $\lambda$.
  \item $\lambda \ge \theta (\lambda (\K))$ (recall Definition \ref{theta-def}).
  \item SCH holds and $\lambda \ge 2^{\lambda (\K)}$.
  \end{enumerate}
\end{thm}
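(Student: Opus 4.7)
\textbf{Proof proposal for Theorem \ref{chainsat-thm}.} The plan is to reduce all four cases to the first two, and to handle case (2) by a direct application of Fact \ref{bv-sat} and case (1) by applying Lemma \ref{chainsat-lem} after replacing $\K$ by $\K_{\ge \lambda'(\K)}$. The main enabling fact is that under weak continuity of splitting, Corollary \ref{locality-cor} gives that $\bclc (\K_{\lambda' (\K)}, \ltu) = \bclcwk (\K_{\lambda' (\K)}, \ltu) = \uchi (\K)$ is an end segment of regular cardinals with minimum $\chi (\K)$. Since $\cf{\delta} \ge \chi (\K)$, we therefore have $\cf{\delta} \in \uchi (\K)$, and by the definition of $\lambda' (\K)$, $\cf{\delta} \in \bclc (\K_{\mu_0}, \ltu)$ for every stability cardinal $\mu_0 \ge \lambda' (\K)$.

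\emph{Case (2):} Pick any stability cardinal $\mu_0 \in [\lambda' (\K) + \chi_0, \lambda)$, which exists since $\K$ is stable in unboundedly many cardinals below $\lambda$ and $\lambda > \lambda' (\K) + \chi_0$. Then the hypotheses of Fact \ref{bv-sat} are satisfied (with $\mu := \lambda$) since $\cf{\delta} \in \bclc (\K_{\mu_0}, \ltu)$ by the previous paragraph. This yields that $\bigcup_{i < \delta} M_i$ is $\lambda$-saturated.

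\emph{Case (1):} Work inside $\K' := \K_{\ge \lambda' (\K)}$, which satisfies $\LS (\K') = \lambda' (\K)$, is $\LS (\K')$-tame, is stable in $\LS (\K')$, and inherits weak continuity of splitting. The hypothesis $\lambda \ge \lambda' (\K) + \chi_0$ together with Theorem \ref{uq-lim-thm} (and uniqueness of limit models of the same cofinality) ensures that \emph{any} $(\lambda, \delta)$-limit model is isomorphic to the $(\lambda, \chi (\K))$-limit model, which is saturated by Corollary \ref{sat-exist-cor}. Since $\cf{\delta} \in \bclc (\K_{\lambda' (\K)}, \ltu)$ and $\K$ is stable in $\lambda$ by hypothesis, Lemma \ref{chainsat-lem} applied to $\K'$ yields that $\bigcup_{i < \delta} M_i$ is $\lambda$-saturated.

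\emph{Cases (3) and (4):} For case (4), Fact \ref{card-arith-fact} gives $\theta (\lambda (\K)) = 2^{\lambda (\K)} \le \lambda$ under SCH, reducing to case (3). For case (3), if $\lambda$ is a limit cardinal then Lemma \ref{limit-above-theta} gives that $\K$ is stable in unboundedly many cardinals below $\lambda$, putting us in case (2). If $\lambda = \mu^+$ is a successor, then $\mu \ge \theta (\lambda (\K))$ unless $\lambda = \theta (\lambda (\K))$; in either event Lemma \ref{closed-lem} applied to the almost $\lambda (\K)$-closed cardinal $\mu$ (or $\lambda$) gives either stability in $\mu$ (hence in $\lambda$ by Fact \ref{stab-spec-facts}(\ref{stab-spec-3}), reducing to case (1)) or stability in unboundedly many cardinals below $\mu$ (hence below $\lambda$, reducing to case (2)). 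The only potential obstacle is making sure that the saturation of the $(\lambda, \delta)$-limit in case (1) actually follows from the stated assumptions; this is precisely where weak continuity of splitting is essential via Theorem \ref{uq-lim-thm}, since without it we would need stability in unboundedly many cardinals below $\lambda$ (as in case (2)).
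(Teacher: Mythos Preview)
Your proof is correct and follows essentially the same architecture as the paper's: case~(1) via Lemma~\ref{chainsat-lem} applied to $\K_{\ge \lambda'(\K)}$ (using Theorem~\ref{uq-lim-thm} to get saturation of the $(\lambda,\delta)$-limit), and cases~(3),(4) reduced to (1) or (2).

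Two small differences are worth noting. First, for case~(2) you invoke Fact~\ref{bv-sat} directly, which is fine (the paper even remarks that this case is ``already known''), whereas the paper instead reduces case~(2) to case~(1): if $\lambda$ is a successor then $\K$ is stable in $\lambda$ by Fact~\ref{stab-spec-facts}(\ref{stab-spec-3}); if $\lambda$ is limit, apply case~(1) at each stability cardinal $\mu\in(\lambda'(\K)+\chi_0,\lambda)$ to see the union is $\mu$-saturated for all such $\mu$, hence $\lambda$-saturated. Second, your reduction for case~(3) is more elaborate than needed and has a tiny gap: Lemma~\ref{limit-above-theta} requires the \emph{strict} inequality $\lambda>\theta(\lambda(\K))$, so your limit subcase does not cover $\lambda=\theta(\lambda(\K))$ limit. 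The paper avoids this by observing directly that $\lambda\ge\theta(\lambda(\K))$ means $\lambda$ itself is almost $\lambda(\K)$-closed, and then applying Lemma~\ref{closed-lem} to $\lambda$ (not to its predecessor) to land immediately in case~(1) or~(2). Replacing your successor/limit split with this one-line observation closes the gap and simplifies the argument.
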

\begin{proof} \
  \begin{enumerate}
  \item We check that the hypotheses of Lemma \ref{chainsat-lem} hold, with $\K, \mu$ there standing for $\K_{\ge \lambda' (\K)}$, $\lambda$ here. By definition and assumption, $\K$ is stable in both $\lambda' (\K)$ and $\lambda$. Furthermore, $\cf{\delta} \in \bclc (\K_{\lambda' (\K)}, \ltu)$ by definition of $\lambda' (\K)$ and $\uchi (\K)$. Finally, any two $(\lambda, \ge \cf{\delta})$-limit models are isomorphic by Theorem \ref{uq-lim-thm}.
  \item If $\lambda$ is a successor, then $\K$ is also stable in $\lambda$ by Fact \ref{stab-spec-facts}(\ref{stab-spec-3}) so we can use the first part. If $\lambda$ is limit, then we can use the first part with each stability cardinal $\mu \in (\lambda' (\K) + \chi_0, \lambda)$ to see that the union of the chain is $\mu$-saturated. As $\lambda$ is limit, this implies that the union of the chain is $\lambda$-saturated.
  \item By definition of $\theta (\lambda (\K))$, $\lambda$ is almost $\lambda (\K)$-closed. By Lemma \ref{closed-lem}, either $\K$ is stable in $\lambda$ or $\K$ is stable in unboundedly many cardinals below $\lambda$, so the result follows from the previous parts.
  \item This is a special case of the previous part, see Fact \ref{card-arith-fact}.
  \end{enumerate}
\end{proof}

\section{Applications to existence and homogeneous model theory}\label{applications-sec}

We present here the following application of Lemma \ref{key-lc-lem}:

\begin{thm}\label{application-thm}
  Let $\K$ be an AEC and let $\lambda \ge \LS (\K)$. Assume that $\K$ has amalgamation in $\lambda$, no maximal models in $\lambda$, is stable in $\lambda$, and is categorical in $\lambda$. Let $\mu \le \lambda$ be a regular cardinal.

  If $\K$ is $(<\mu)$-tame, then $\K$ is $\lambda$-superstable (recall Definition \ref{ss-def}). In particular, it has a model of cardinality $\lambda^{++}$.
\end{thm}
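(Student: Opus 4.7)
The goal is to verify $\lambda$-superstability (Definition~\ref{ss-def}). The first three clauses are given by hypothesis---joint embedding in $\K_\lambda$ follows from categoricity---so the real task is to show $\bclc (\K_\lambda, \ltu) = \REG$.

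First I would use categoricity to force the unique $M \in \K_\lambda$ to be saturated. Stability, amalgamation, and no maximal models in $\lambda$ permit, for every regular $\sigma \le \lambda$, the construction of a $(\lambda, \sigma)$-limit model; a standard cofinality argument (any $\lea$-submodel of size less than $\sigma$ embeds into some stage $M_i^{(\sigma)}$ whose successor is universal over $M_i^{(\sigma)}$) shows it is $\sigma$-saturated. Categoricity identifies each such limit with $M$, so $M$ is $\sigma$-saturated for every regular $\sigma \le \lambda$; i.e.\ $M$ is saturated.

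Now fix a regular $\chi$ and a $\ltu$-increasing chain $\langle M_i : i < \chi \rangle$ in $\K_\lambda$ with union $M_\chi$, and $p \in \gS (M_\chi)$. The case $\chi > \lambda$ is vacuous, and for $\chi \le \lambda$ the union $M_\chi$ has cardinality $\lambda$ and is therefore saturated by categoricity. The proof of Lemma~\ref{key-lc-lem} adapts to $(<\mu)$-tameness: the splitting witnesses $M_\ell^i$ are chosen of size less than $\mu$ and gathered into a single $N \lea M_\chi$ of size at most $\mu + \chi$. When $\mu + \chi < \lambda$, the $\lambda$-saturation of $M_\chi$ realizes $p \rest N$ and the contradiction in the original proof goes through verbatim.

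What remains is the edge case $\mu + \chi = \lambda$---most substantively $\chi = \lambda$ with $\lambda$ regular, where $|N|$ can be forced up to $\lambda$. For this I would invoke Fact~\ref{splitting-lc} to produce an $M^\ast \lea M_\chi$ of size $\lambda$ over which $p$ does not $\lambda$-split, then exploit the $(\lambda, \lambda)$-limit structure of $M_\chi$ and weak uniqueness of non-splitting (Fact~\ref{splitting-weak-uq}) to transport the non-splitting of $p$ from $M^\ast$ to some $M_i$ of the given chain. I expect this transport step to be the main technical obstacle. Once clause~(4) is in hand, the ``in particular'' is routine: $(<\mu)$-tameness with $\mu \le \lambda$ makes $\K_{\ge \lambda}$ tame at its L\"owenheim--Skolem number, so Fact~\ref{ss-upward} transfers $\lambda$-superstability upward to $\lambda^+$, yielding no maximal models in $\lambda^+$ and hence a model of cardinality $\lambda^{++}$.
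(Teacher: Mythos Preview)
Your treatment of the case $\chi < \lambda$ via the proof of Lemma~\ref{key-lc-lem} matches the paper's Case~1. The genuine gap is precisely where you flag it: the case $\chi = \lambda$. Your proposed transport argument does not go through. Fact~\ref{splitting-lc} only hands you an $M^\ast \in \K_\lambda$ with $M^\ast \lea M_\lambda$, but since $\|M_\lambda\| = \lambda$ as well, $M^\ast$ could be all of $M_\lambda$; there is no cofinality argument forcing $M^\ast$ into some $M_i$, and weak uniqueness (Fact~\ref{splitting-weak-uq}) governs equality of \emph{types} sharing a base, not transport of the \emph{base} of non-splitting from one model to another. I do not see how to close the step along the lines you suggest, and you yourself label it an obstacle rather than an argument.

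The paper handles $\chi = \lambda$ by a different decomposition, namely Fact~\ref{locality-fact}: $\bclcwk \cap \bclcc \subseteq \bclc$. The two ingredients are verified separately. That $\lambda \in \bclcwk(\K_\lambda, \ltu)$ is a standard consequence of stability in $\lambda$ (cited from \cite[I.3.3(2)]{sh394}). That $\lambda \in \bclcc(\K_\lambda, \ltu)$ follows from Theorem~\ref{loc-implies-cont} once one observes that $(<\mu)$-tameness with $\mu \le \lambda$ makes $\K$ weakly $\lambda$-local in the sense of Definition~\ref{weakly-local-def}: a type over the union of a chain of length $\lambda$ is determined by its restrictions to the chain members, since any small witness to inequality lives inside some $M_i$. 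This continuity-plus-weak-local-character decomposition is the missing idea, and it is where the $(<\mu)$-tameness hypothesis does real work beyond the Lemma~\ref{key-lc-lem} argument.
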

\begin{remark}
  Here, $(<\mu)$-tameness is defined using Galois types over sets, see \cite[2.16]{sv-infinitary-stability-afml}.
\end{remark}

Theorem \ref{application-thm} can be seen as a partial answer to the question ``what stability-theoretic properties in $\lambda$ imply the existence of a model in $\lambda^{++}$?'' (this question is in turn motivated by the problem \cite[I.3.21]{shelahaecbook} of whether categoricity in $\lambda$ and $\lambda^+$ should imply existence of a model in $\lambda^{++}$). It is known that $\lambda$-superstability is enough \cite[8.9]{indep-aec-apal}. Theorem \ref{application-thm} shows that in fact $\lambda$-superstability is implied by categoricity, amalgamation, no maximal models, stability, and tameness.

Before proving Theorem \ref{application-thm}, we state a corollary to homogeneous model theory (see \cite{sh3} or the exposition in \cite{grle-homog}). The result is known in the first-order case \cite[VIII.0.3]{shelahfobook} but to the best of our knowledge, it is new in homogeneous model theory.

\begin{cor}
  Let $D$ be a homogeneous diagram in a first-order theory $T$. If $D$ is both stable and categorical in $|T|$, then $D$ is stable in all $\lambda \ge |T|$.
\end{cor}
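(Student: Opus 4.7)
The plan is to view $D$ as an AEC and apply Theorem \ref{application-thm} together with the upward transfer of superstability (Fact \ref{ss-upward}). Let $\K := (\Mod(D), \preceq)$ be the AEC of models of the homogeneous diagram $D$ ordered by elementary substructure. Standard results in homogeneous model theory give that $\K$ has amalgamation, joint embedding, and no maximal models, and that $\LS(\K) = |T|$. Furthermore, by homogeneity of the monster model, Galois types over $M \in \K$ coincide with complete syntactic types over $M$ that are in $D$. In particular, $\K$ is $(<\aleph_0)$-tame (as noted right after Definition \ref{weakly-local-def}), stability of $D$ in $|T|$ coincides with stability of $\K$ in $|T|$, and categoricity of $D$ in $|T|$ coincides with categoricity of $\K$ in $|T|$.

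With these translations, set $\lambda := |T|$ and $\mu := \aleph_0$; then $\mu$ is regular, $\mu \le \lambda$, and $\K$ is $(<\mu)$-tame, stable in $\lambda$, and categorical in $\lambda$. Theorem \ref{application-thm} then applies and yields that $\K$ is $|T|$-superstable.

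Since $\K$ is $\mu$-tame for every $\mu \ge |T|$ (because it is $(<\aleph_0)$-tame), Fact \ref{ss-upward} upgrades $|T|$-superstability to $\mu$-superstability for every $\mu \ge |T|$. Stability in $\mu$ is a clause in the definition of $\mu$-superstability (Definition \ref{ss-def}), so $\K$ is stable in every $\mu \ge |T|$, and translating back this means exactly that $D$ is stable in every $\lambda \ge |T|$.

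The only nontrivial point is the dictionary between $D$ and $\K$: one must verify that the relevant properties (amalgamation, no maximal models, stability, categoricity, and $(<\aleph_0)$-tameness) carry over. All of these are standard consequences of the existence of the $D$-monster model and homogeneity; no new work is needed beyond citing the foundational results of Shelah's \cite{sh3} (as exposed in \cite{grle-homog}). After that, the conclusion is a direct application of the machinery developed in this paper.
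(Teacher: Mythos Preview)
Your proposal is correct and follows essentially the same approach as the paper: translate $D$ into an AEC, apply Theorem \ref{application-thm} to obtain $|T|$-superstability, then invoke Fact \ref{ss-upward} to transfer superstability (hence stability) upward. The paper's proof is more terse but the logical skeleton is identical.
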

\begin{proof}
  Let $\K_D$ be the class of $D$-models of $T$. It is easy to check that it is an $(<\aleph_0)$-tame AEC with a monster model. By Theorem \ref{application-thm}, $\K_D$ is $|T|$-superstable. Now apply Fact \ref{ss-upward}. 
\end{proof}

\begin{proof}[Proof of Theorem \ref{application-thm}]
  The ``in particular'' part is by the proof of \cite[8.9]{indep-aec-apal}, which shows that $\lambda$-superstability implies no maximal models in $\lambda^+$. We now prove that $\K$ is $\lambda$-superstable. For this it is enough to show that $\clc (\K_\lambda, \ltu) = \aleph_0$. So let $\delta < \lambda^+$ be a regular cardinal. We want to see that $\delta \in \bclc (\K_\lambda, \ltu)$. We consider two cases:

  \begin{itemize}
  \item \underline{Case 1: $\delta < \lambda$}. Let $\seq{M_i : i \le \delta}$ be $\ltu$-increasing continuous in $\K_\lambda$ and let $p \in \gS (M_\delta)$. Then by categoricity $M_\delta$ is $(\lambda, \delta^+ + \mu)$-limit, so the proof of Lemma \ref{key-lc-lem} directly gives that there exists $i < \delta$ such that $p$ does not $\lambda$-split over $M_i$.
  \item \underline{Case 2: $\delta = \lambda$}. Note first that this means $\lambda$ is regular. By \cite[I.3.3(2)]{sh394}, $\lambda \in \bclcwk (\K_\lambda, \ltu)$. By assumption, $\K$ is $(<\lambda)$-tame, thus it is weakly $\lambda$-local (recall Definition \ref{weakly-local-def}). By Theorem \ref{loc-implies-cont}, $\lambda \in \bclcc (\K_\lambda, \ltu)$. By Fact \ref{locality-fact}, $\delta = \lambda \in \bclc (\K_\lambda, \ltu)$, as desired.
  \end{itemize}
\end{proof}

\bibliographystyle{amsalpha}
\bibliography{stability-spectrum-aec}

\end{document}